\providecommand{\U}[1]{\protect\rule{.1in}{.1in}}
\providecommand{\U}[1]{\protect\rule{.1in}{.1in}}
\providecommand{\U}[1]{\protect\rule{.1in}{.1in}}
\providecommand{\U}[1]{\protect\rule{.1in}{.1in}}
\providecommand{\U}[1]{\protect\rule{.1in}{.1in}}
\providecommand{\U}[1]{\protect\rule{.1in}{.1in}}
\providecommand{\U}[1]{\protect\rule{.1in}{.1in}}
\providecommand{\U}[1]{\protect\rule{.1in}{.1in}}
\providecommand{\U}[1]{\protect\rule{.1in}{.1in}}
\providecommand{\U}[1]{\protect\rule{.1in}{.1in}}
\providecommand{\U}[1]{\protect\rule{.1in}{.1in}}
\providecommand{\U}[1]{\protect\rule{.1in}{.1in}}
\providecommand{\U}[1]{\protect\rule{.1in}{.1in}}
\providecommand{\U}[1]{\protect\rule{.1in}{.1in}}
\providecommand{\U}[1]{\protect\rule{.1in}{.1in}}
\providecommand{\U}[1]{\protect\rule{.1in}{.1in}}
\providecommand{\U}[1]{\protect\rule{.1in}{.1in}}
\providecommand{\U}[1]{\protect\rule{.1in}{.1in}}
\providecommand{\U}[1]{\protect\rule{.1in}{.1in}}
\providecommand{\U}[1]{\protect\rule{.1in}{.1in}}
\providecommand{\U}[1]{\protect\rule{.1in}{.1in}}
\providecommand{\U}[1]{\protect\rule{.1in}{.1in}}
\providecommand{\U}[1]{\protect\rule{.1in}{.1in}}
\providecommand{\U}[1]{\protect\rule{.1in}{.1in}}
\providecommand{\U}[1]{\protect\rule{.1in}{.1in}}
\providecommand{\U}[1]{\protect\rule{.1in}{.1in}}
\providecommand{\U}[1]{\protect\rule{.1in}{.1in}}
\providecommand{\U}[1]{\protect\rule{.1in}{.1in}}
\providecommand{\U}[1]{\protect\rule{.1in}{.1in}}
\newtheorem{theorem}{Theorem}
{}
\newtheorem{conclusion}{Conclusion}
\newtheorem{corollary}{Corollary}
\newtheorem{definition}{Definition}
\newtheorem{lemma}{Lemma}
{}
\newtheorem{notation}{Notation}
\newtheorem{proposition}{Proposition}
\newtheorem{remark}{Remark}
\newenvironment{proof}[1][Proof]{\textbf{#1.} }{\ \rule{0.5em}{0.5em}}
\begin{document}

\title{Essential Spectral Singularities and the Spectral Expansion for the Hill Operator}
\author{O. A. Veliev\\{\small \ Depart. of Math., Dogus University, }\\{\small Ac\i badem, 34722, Kadik\"{o}y, \ Istanbul, Turkey.}\\\ {\small e-mail: oveliev@dogus.edu.tr}}
\date{}
\maketitle

\begin{abstract}
In this paper we investigate the spectral expansion for the one-dimensional
Schrodinger operator with a periodic complex-valued potential. For this we
consider in detail the spectral singularities and introduce new concepts as
essential spectral singularities and singular quasimomenta.

Key Words: Schrodinger operator, Spectral singularities, Spectral expansion.

AMS Mathematics Subject Classification: 34L05, 34L20.

\end{abstract}

\section{ Introduction and Preliminary Facts}

In this paper we investigate the one dimensional Schr\"{o}dinger operator
$L(q)$ generated in $L_{2}(-\infty,\infty)$ by the differential expression
\begin{equation}
l(y)=-y^{^{\prime\prime}}(x)+q(x)y(x),
\end{equation}
where $q$ is $1$-periodic, Lebesgue integrable on $[0,1]$ and complex-valued
potential. Without loss of generality, we assume that the integral of $q$ over
$[0,1]$ is $0.$ It is well-known [1, 7, 8] that the spectrum $\sigma(L)$ of
the operator $L$ is the union of the spectra $\sigma(L_{t})$ of the operators
$L_{t}(q)$ for $t\in(-\pi,\pi]$ generated in $L_{2}[0,1]$ by (1) and the
boundary conditions
\begin{equation}
y(1)=e^{it}y(0),\text{ }y^{^{\prime}}(1)=e^{it}y^{^{\prime}}(0).
\end{equation}
The eigenvalues of $L_{t}$ are the roots of the characteristic equation
\begin{equation}
\Delta(\lambda,t)=:\left\vert
\begin{array}
[c]{cc}%
\theta(1,\lambda)-e^{it} & \varphi(1,\lambda)\\
\text{ }\theta^{\prime}(1,\lambda) & \varphi^{\prime}(1,\lambda)-e^{it}%
\end{array}
\right\vert =0
\end{equation}
of the operator $L_{t}$ which equivalent to
\begin{equation}
F(\lambda)=2\cos t,
\end{equation}
where $F(\lambda)=:\varphi^{\prime}(1,\lambda)+\theta(1,\lambda)$ is the Hill
discriminant, $\theta(x,\lambda)$ and $\varphi(x,\lambda)$ are the solutions
of the equation $l(y)=\lambda y$ satisfying the following initial conditions%

\begin{equation}
\theta(0,\lambda)=\varphi^{\prime}(0,\lambda)=1,\text{ }\theta^{\prime
}(0,\lambda)=\varphi(0,\lambda)=0.
\end{equation}

We consider the spectral expansion of the non-self-adjoint operator $L(q)$.
The spectral expansion for the self-adjoint operator $L(q)$\ was constructed
by Gelfand [3] and Titchmarsh [11]. The existence of the spectral
singularities and the absence of the Parseval's equality for the
nonself-adjoint operator $L_{t}$ do not allow us to apply the elegant method
of Gelfand (see\ [3]) for the construction of the spectral expansion for the
nonself-adjoin operator $L(q)$. Note that the spectral singularities of the
operator $L(q)$ are the points of its spectrum in neighborhoods of which the
projections of $L(q)$ are not uniformly bounded (see [15] and [4]). McGarvey
[7, 8] proved that $L(q)$ is a spectral operator if and only if the
projections of the operators $L_{t}(q)$ are bounded uniformly with respect to
$t$ in $(-\pi,\pi]$. Tkachenko [12] proved that the non-self-adjoint operator
$L$ can be reduced to triangular form if all eigenvalues of the operators
$L_{t}$ for $t\in(-\pi,\pi]$ are simple. However, in general, the eigenvalues
are not simple and the projections of the operators $L_{t}$ are not uniformly
bounded. Indeed, Gasymov's paper [2] shows that the operators $L_{t}(q)$ with
the potential $q$ of the form \
\begin{equation}
q(x)=%
%TCIMACRO{\tsum \limits_{n=1}^{\infty}}%
%BeginExpansion
{\textstyle\sum\limits_{n=1}^{\infty}}
%EndExpansion
q_{n}e^{inx},\text{ }%
%TCIMACRO{\tsum \limits_{n}}%
%BeginExpansion
{\textstyle\sum\limits_{n}}
%EndExpansion
\mid q_{n}\mid<\infty
\end{equation}
have infinitely many multiple eigenvalues, their projections are not uniformly
bounded and no one operator $L(q)$ with nonzero potential of type (6) is
spectral, since they have, in general, infinitely many spectral singularities.
Gasymov in [2] investigated the direct and inverse problems of the operator
$L(q)$ with potential (6) and derived a regularized spectral expansion. The
method of [2] is applicable only for the potentials of type (6). Gesztezy and
Tkachenko [4] proved two versions of a criterion for the operator $L(q)$ with
$q\in L_{2}[0,1]$ to be a spectral operator of scalar type, in sense of
Dunford, one analytic and one geometric. The analytic version was stated in
term of the solutions of Hill equation. The geometric version of the criterion
uses algebraic and geometric \ properties of the spectra of
periodic/antiperiodic and Dirichlet boundary value problems.

The problem of describing explicitly, for which potentials $q$ the Hill
operators $L(q)$ are spectral operators appears to have been open for about 50
years. Moreover, the discussed papers show that the set of potentials $q$ for
which $L(q)$ is spectral is a small subset of the periodic functions and it is
very hard to describe explicitly the required subset. In paper [19] we found
the explicit conditions on the potential $q$ such that $L(q)$ is an
asymptotically spectral operator and in [20] we constructed the spectral
expansion for the asymptotically spectral operator. However, the set of the
potentials constructed in [19] is also a small subset of the periodic
functions. Thus the theory of spectral operators is ineffective for the
construction of the spectral expansion for the nonself-adjoint periodic
differential operators. It is connected with the complicated picture of the
projections of the Hill operator with general complex potential. In this
paper, we construct the spectral expansion for the operator $L(q)$ with
arbitrary complex-valued locally integrable and periodic potential $q$. In
other word, we investigate in detail the spectral expansion for the general
and frequent case when the operator $L$ is not an asymptotically spectral and
hence is not a spectral operator. For this we introduce new concepts as
essential spectral singularities (ESS) and singular quasimomenta defined in
Definitions 3 and Definition 4.

To discuss more precisely the obtained results and to give the brief scheme of
this paper we need some preliminary facts about:

\textbf{(a) the eigenvalues of }$L_{t}(q)$\textbf{ and spectrum of }$L(q),$

\textbf{(b) the eigenfunction of }$L_{t}(q)$\textbf{,}

\textbf{(c) the spectral singularities of }$L(q)$\textbf{,}

\textbf{(d) the problems of the spectral expansion of }$L.$

Note that there are a lot of papers about the spectra of $L_{t}$ and $L$ (see
[1, 4, 18] and references on them). Here we introduce the facts which are used
essentially for the construction of the spectral expansion. In this section,
after introducing the preliminary facts we give the definition of ESS and
discuss its importance in the construction of the spectral expansion. In
section 2 we investigate the spectral singularities and ESS. In Section 3 we
construct spectral expansion for the operator $L(q)$ in term of the improper
integrals by using the ESS, singular quasimomenta and some parenthesis.
Finally, we explain (see Conclusion 1) why it is necessary to use the improper
integrals and parenthesis.

\textbf{(a) On the eigenvalues of }$L_{t}(q)$\textbf{ and spectrum of }$L(q).$

In the case $q=0$ the eigenvalues and eigenfunctions of $L_{t}(q)$ are $(2\pi
n+t)^{2}$ and $e^{i(2\pi n+t)x}$ for $n\in\mathbb{Z}$ respectively. In [17] we
proved that the large eigenvalues of the operators $L_{t}(q)$\ for $t\neq
0,\pi$ consist of the sequence $\left\{  \lambda_{n}(t):\mid n\mid
\gg1\right\}  $ satisfying\ \
\begin{equation}
\lambda_{n}(t)=(2\pi n+t)^{2}+O(n^{-1}\ln\left\vert n\right\vert )
\end{equation}
as $n\rightarrow\infty$ and the formula (7) is uniform with respect to $t$ in
$Q_{h},$ where
\begin{equation}
Q_{h}=\{t\in Q,\text{ }|t-\pi k|\geq h,\text{ }k=0,\pm1\},
\end{equation}
$h\in(0,1)$ and
\begin{equation}
Q=\{z\in\mathbb{C}:|\operatorname{Im}z|<1,-\pi<\operatorname{Re}z<\pi+1\}.
\end{equation}
Note that, the formula\ $f(n,t)=O(g(n))$ as $n\rightarrow\infty$ is said to be
uniform with respect to $t$ in a set $I$ if there exist positive constants $M$
and $N,$ independent of $t,$ such that $\mid f(n,t))\mid<M\mid g(n)\mid$ for
all $t\in I$ and $\mid n\mid\geq N.$ Moreover, it follows from (7) that for
any fixed $h$ ($h\in(0,1)$) there exists an integer $N(h)$ and positive
constant $M(h)$ such that for $|n|>N(h)$ and $t\in Q_{h}$ there exists unique
eigenvalue $\lambda_{n}(t),$ counting multiplicity, satisfying
\begin{equation}
\left\vert \lambda_{n}(t)-(2\pi n+t)^{2}\right\vert \leq n^{-1}M(h).
\end{equation}
Thus $\lambda_{n}(t)$ is simple for all $|n|>N(h)$ and $t\in Q_{h}.$

Besides, as it was shown in [19, 20], the integer $N(h)$ can be chosen so that
for $\left\vert t\right\vert \leq h$ and $|n|>N(h)$ there exist two
eigenvalues, counting multiplicity, denoted by $\lambda_{n}(t)$ and
$\lambda_{-n}(t)$ and satisfying
\begin{equation}
\left\vert \lambda_{\pm n}(t)-(2\pi n+t)^{2}\right\vert \leq15\pi nh.
\end{equation}
Similarly, for $\left\vert t-\pi\right\vert \leq h$ and $|n|>N(h)$ there exist
two eigenvalues, counting multiplicity, denoted by $\lambda_{n}(t)$ and
$\lambda_{-(n+1)}(t)$ such that%

\begin{equation}
\left\vert \lambda_{n}(t)-(2\pi n+t)^{2}\right\vert \leq15\pi nh,\text{
}\left\vert \lambda_{-(n+1)}(t)-(2\pi n+t)^{2}\right\vert \leq15\pi nh.
\end{equation}

As we noted above the spectrum $\sigma(L(q))$ of $L(q)$ is the union of the
eigenvalues of $L_{t}$ for all $t\in(-\pi,\pi].$ In [20] we proved that the
eigenvalues of $L_{t}$ can be numbered (counting the multiplicity) by elements
of $\mathbb{Z}$ such that, for each $n$ the function $\lambda_{n}(t)$ is
continuous on $[0,\pi]$ and for $|n|>N(h)$ the inequalities (10)-(12) hold.
The eigenvalues of $L_{-t}(q)$ coincides with the eigenvalues of $L_{t}(q),$
because they are roots of equation (4) and $\cos(-t)=\cos t.$ We define the
eigenvalue $\lambda_{n}(-t)$ of $L_{-t}(q)$ by $\lambda_{n}(-t)=\lambda
_{n}(t)$ for all $t\in(0,\pi).$ Thus
\begin{equation}
\sigma(L(q))=%
%TCIMACRO{\tbigcup \limits_{n\in\mathbb{Z}}}%
%BeginExpansion
{\textstyle\bigcup\limits_{n\in\mathbb{Z}}}
%EndExpansion
\Gamma_{n},
\end{equation}
where
\begin{equation}
\Gamma_{n}=\left\{  \lambda_{n}(t):t\in\lbrack0,\pi]\right\}
\end{equation}
is a continuous curve.

The multiple eigenvalues of $L_{t}$ are the common roots of (4) and
$F^{^{\prime}}(\lambda)=0.$ Since the Hill discriminant $F(\lambda)$ is a
nonzero entire function, the set of zeros of $F^{^{\prime}}(\lambda)$ is at
most countable and can have no finite limit point. Let $\mu_{1},\mu_{2},...,$
be the roots of $F^{^{\prime}}(\lambda)=0$ and%

\begin{equation}
A=\{\pm t_{k}:k=1,2,...,\},\text{ }A_{n}=\left\{  \pm t_{k}:\mu_{k}\in
\Gamma_{n}\right\}  ,
\end{equation}
where $t_{k}=\arccos\tfrac{1}{2}F(\mu_{k}).$ Here the real part of the range
of usual principal value of $\arccos z$ is in $[0,\pi]$ and the imaginary part
is nonnegative and the quasimomenta $t+2\pi n$ for $n\in\mathbb{Z}$ also are
denoted by $t.$ In these notations and by (4) we have
\begin{equation}
F^{^{\prime}}(\mu_{k})=0,\text{ }\mu_{k}\in\sigma(L_{t_{k}})=\sigma(L_{-t_{k}%
}),\text{ }\mu_{k}\notin\sigma(L_{t}),\text{ }\forall t\neq\pm t_{k}.
\end{equation}
Thus, if $t\notin A,$ then all eigenvalues of $L_{t}$ are simple eigenvalues.
It follows from\textit{ }the well known asymptotic formulas for $F(\lambda)$
that (see [6]) the accumulation points of the set $A\cap Q$ are $0$ and $\pi.$
Therefore $\overline{A}=A\cup\{0,\pi\}.$

Suppose that the multiplicity of the eigenvalue $\mu_{k}$ is $j.$ If $t_{k}%
\in(-\pi,\pi]$ then $j$ components (14) of the spectrum of $L(q)$ meet at the
point$\mu_{k}$. If $\mu_{k}$ is a large number then $j\leq2.$ Therefore if
$\lambda_{n}(0)$ for large $n$ is the double eigenvalue of $L_{0}$, then it
readily follows from the numerations of the eigenvalues and (11) that the
components $\Gamma_{n}$ and $\Gamma_{-n}$ are joined. Similarly if
$\lambda_{n}(\pi)$ for large $n$ is the double eigenvalue of $L_{\pi}$, then
by (12) the components $\Gamma_{n}$ and $\Gamma_{-(n+1)}$ are joined.

\textbf{(b) On the eigenfunctions of }$L_{t}.$ In [17] we proved that the
normalized eigenfunction $\Psi_{n,t}(x)$ corresponding to the eigenvalue
$\lambda_{n}(t)$\ satisfies
\begin{equation}
\Psi_{n,t}(x)=\frac{1}{\parallel e^{itx}\parallel}e^{i(2n\pi+t)x}%
+h_{n,t},\text{ }\left\Vert h_{n,t}\right\Vert =O(n^{-1})
\end{equation}
and the formula (17) is uniform with respect to $t$ in $Q_{h}.$

Let $\Psi_{n,t}^{\ast}$ be the normalized eigenfunction of $(L_{t}(q))^{\ast}$
corresponding to $\overline{\lambda_{n}(t)}.$ The boundary condition adjoint
to (2) is
\begin{equation}
y(1)=e^{i\overline{t}}y(0),\text{ }y^{^{\prime}}(1)=e^{i\overline{t}%
}y^{^{\prime}}(0).
\end{equation}
Therefore, $(L_{t}(q))^{\ast}=$ $L_{\overline{t}}(\overline{q})$ and by (17),
we have the following uniform with respect to $t$ in $Q_{h}$ asymptotic
formula
\begin{equation}
\Psi_{n,t}^{\ast}(x)=\frac{1}{\parallel e^{i\overline{t}x}\parallel}e^{i(2\pi
n+\overline{t})x}+h_{n,t}^{\ast}(x),\text{ }\left\Vert h_{n,t}^{\ast
}\right\Vert =O(n^{-1}).
\end{equation}

Replacing first and second row of the characteristic determinant in (3) by the
row vector $(\theta(x,\lambda),$ $\varphi(x,\lambda))$ we obtain the functions%
\begin{equation}
G_{t}(x,\lambda)=\theta^{\prime}\varphi(x,\lambda)+(e^{it}-\varphi^{\prime
})\theta(x,\lambda)
\end{equation}
and
\begin{equation}
\Phi_{t}(x,\lambda)=\varphi\theta(x,\lambda)+(e^{it}-\theta)\varphi(x,\lambda)
\end{equation}
which for $\lambda=\lambda_{n}(t)$ are the eigenfunctions (if they are not the
zero functions) of $L_{t}$ corresponding to the eigenvalue $\lambda_{n}(t)$,
where for simplicity of the notations $\varphi(1,\lambda),$ $\theta
(1,\lambda),$ $\varphi^{\prime}(1,\lambda)$ and $\theta^{\prime}(1,\lambda)$
are denoted by $\varphi,\theta,\varphi^{\prime}$ and $\theta^{\prime}$
respectively. Then the normalized eigenfunctions $\Psi_{n,t}(x)$ and
$\Psi_{n,t}^{\ast}(x)$ for $t\in(-\pi,\pi]$ can be written in the form
\begin{equation}
\Psi_{n,t}(x)=\frac{\Phi_{t}(x,\lambda_{n}(t))}{\left\Vert \Phi_{t}%
(\cdot,\lambda_{n}(t))\right\Vert },\text{ }\Psi_{n,t}^{\ast}=\frac
{\overline{\Phi_{-t}(x,\lambda_{n}(t))}}{\left\Vert \Phi_{-t}(\cdot
,\lambda_{n}(t))\right\Vert }%
\end{equation}
or
\begin{equation}
\Psi_{n,t}(x)=\frac{G_{t}(x,\lambda_{n}(t))}{\left\Vert G_{t}(\cdot
,\lambda_{n}(t))\right\Vert },\text{ }\Psi_{n,t}^{\ast}=\frac{\overline
{G_{-t}(x,\lambda_{n}(t))}}{\left\Vert G_{-t}(\cdot,\lambda_{n}(t))\right\Vert
}.
\end{equation}
It is well known that [9, 10] for each $t\notin\overline{A}$ the system
$\{\Psi_{n,t}:n\in\mathbb{Z}\}$ is a Reisz basis of $L_{2}[0,1]$ and $\left\{
X_{n,t}:n\in\mathbb{Z}\right\}  ,$ defined by
\begin{equation}
X_{n,t}=\frac{1}{\overline{\alpha_{n}(t)}}\Psi_{n,t}^{\ast},\text{ }\alpha
_{n}(t)=(\Psi_{n,t},\Psi_{n,t}^{\ast}),
\end{equation}
is the beorthogonal system, where $(\cdot,\cdot)$ is the inner product in
$L_{2}[0,1]$.

\textbf{(c) On the} \textbf{spectral singularities of} $L.$ Since the spectral
singularities of the operator $L(q)$ are the points of its spectrum in
neighborhoods of which the projections of $L(q)$ are not uniformly bounded, to
consider the spectral singularities, first we need to discuss the projections
of the operators $L_{t}(q)$ and $L(q)$. It is well-known that (see p. 39 of
[10]) if $\lambda_{n}(t)$ is a simple eigenvalue of $L_{t},$ then the spectral
projection $e(t,\gamma)$ defined by contour integration of the resolvent of
$L_{t}(q)$, where $\gamma$ is the closed contour containing only the
eigenvalue $\lambda_{n}(t),$ has the form%
\begin{equation}
e(t,\gamma)f=\frac{1}{\alpha_{n}(t)}(f,\Psi_{n,t}^{\ast})\Psi_{n,t}.\text{ }%
\end{equation}
One can easily verify that.
\begin{equation}
\left\Vert e(t,\gamma)\right\Vert =\left\vert \frac{1}{\alpha_{n}%
(t)}\right\vert .
\end{equation}

In [15] we defined projection $P(\gamma)$ of $L$ for the arc $\gamma
\subset\Gamma_{n}$ which does not contain the multiple eigenvalues of the
operators $L_{t},$ as follows
\begin{equation}
P(\gamma)=\lim_{\varepsilon\rightarrow0}\int\limits_{\gamma_{\varepsilon}%
^{1}\cup\gamma_{\varepsilon}^{2}}(L-\lambda I)^{-1}d\lambda,
\end{equation}
where $\gamma_{\varepsilon}^{1}\subset\rho(L)$ and $\gamma_{\varepsilon}%
^{2}\subset\rho(L)$ are the connected curves lying in opposite sides of
$\gamma$ and
\[
\lim_{\varepsilon\rightarrow0}\gamma_{\varepsilon}^{i}=\gamma,\text{ }%
\forall=i=1,2.
\]
Here $\rho(L)$ denotes the resolvent set of $L.$ Moreover, we proved that if
additionally the derivative of the characteristic determinant with respect to
the quasimomentum $t$ is nonzero, which equivalent to the condition
$\lambda_{n}^{^{\prime}}(t)\neq0$ and holds for $\lambda_{n}(t)\in\gamma,$
$t\neq0,\pi,$ then
\begin{equation}
P(\gamma)f=\frac{1}{2\pi}%
%TCIMACRO{\tint \limits_{\delta}}%
%BeginExpansion
{\textstyle\int\limits_{\delta}}
%EndExpansion
\frac{1}{\alpha_{n}(t)}(f,\Psi_{n,t}^{\ast})_{\mathbb{R}}\Psi_{n,t}dt,\text{
}\left\Vert P(\gamma)\right\Vert =\sup_{t\in\delta}\frac{1}{\left\vert
\alpha_{n}(t)\right\vert },
\end{equation}
where $\delta=\left\{  t\in(-\pi,\pi]:\lambda_{n}(t)\in\gamma\right\}  $ and
$(\cdot,\cdot)_{I}$ for any set $I$ denotes the inner product in $L_{2}(I).$
Thus the uniform boundedness of the projections $P(\gamma)$ and hence the
existence of the spectral singularities depend on the behavior of $\alpha
_{n}(t).$ To investigate $\alpha_{n}(t)$ we use the formula
\begin{equation}
\alpha_{n}(t)=-\frac{\varphi(1,\lambda_{n}(t))F^{^{\prime}}(\lambda_{n}%
(t))}{\left\Vert \Phi_{t}(\cdot,\lambda_{n}(t))\right\Vert \left\Vert
\Phi_{-t}(\cdot,\lambda_{n}(t))\right\Vert }%
\end{equation}
which immediately follows from (24), (22), (21), (4), the Wronskian equality
\begin{equation}
\theta\varphi^{\prime}-\theta^{\prime}\varphi=1
\end{equation}
and the formula
\begin{equation}
F^{^{\prime}}(\lambda)=%
%TCIMACRO{\tint \limits_{0}^{1}}%
%BeginExpansion
{\textstyle\int\limits_{0}^{1}}
%EndExpansion
\theta^{\prime}\varphi^{2}(x,\lambda)+\left(  \theta-\varphi^{\prime}\right)
\theta(x,\lambda)\varphi(x,\lambda)-\varphi\theta^{2}(x,\lambda)dx
\end{equation}
obtained in [11] (see (21.4.5) in Section 21 of [11]). Instead of (22) and
(21) using (23) and (20) we obtain
\begin{equation}
\alpha_{n}(t)=-\frac{\theta^{\prime}(1,\lambda_{n}(t))F^{^{\prime}}%
(\lambda_{n}(t))}{\left\Vert G_{t}(\cdot,\lambda_{n}(t))\right\Vert \left\Vert
G_{-t}(\cdot,\lambda_{n}(t))\right\Vert }.
\end{equation}

In [4], the projections were defined as follows. By Definition 2.4 of [4], a
closed arc $\gamma=:\{z\in\mathbb{C}:z=\lambda(t),t\in\lbrack\alpha,\beta]\}$
with $\lambda(t)$ analytic in an open neighborhood of $[\alpha,\beta]$ and
\[
F(\lambda(t))=2\cos t,\text{ }F^{^{\prime}}(\lambda(t))\neq0,\text{ }\forall
t\in\lbrack\alpha,\beta],\text{ }\lambda^{^{\prime}}(t)\neq0,\text{ }\forall
t\in(\alpha,\beta)
\]
is called a regular spectral arc of $L(q).$ The projection $\widetilde
{P}(\gamma)$ corresponding to the regular spectral arc $\gamma$ was defined
by
\begin{equation}
\widetilde{P}(\gamma)=\frac{1}{2\pi}%
%TCIMACRO{\tint \limits_{\gamma}}%
%BeginExpansion
{\textstyle\int\limits_{\gamma}}
%EndExpansion
(\Phi_{+}(x,\lambda)F_{-}(\lambda,f)+\Phi_{-}(x,\lambda)F_{+}(\lambda
,f))\frac{1}{\varphi p(\lambda)}d\lambda,
\end{equation}
where
\begin{equation}
\Phi_{\pm}(x,\lambda)=\varphi\theta(x,\lambda)+\tfrac{1}{2}(\varphi^{\prime
}-\theta\pm ip(\lambda))\varphi(x,\lambda),
\end{equation}%
\begin{equation}
\text{ }p(\lambda)=\sqrt{4-F^{2}(\lambda)},\text{ }F_{\pm}(\lambda
,f)=\int_{\mathbb{R}}f(x)\Phi_{\pm}(x,\lambda)dx.
\end{equation}
Using (4) one can readily see that
\begin{equation}
\Phi_{\pm}(x,\lambda_{n}(t))=\Phi_{\pm t}(x,\lambda_{n}(t)),
\end{equation}
where $\Phi_{\pm t}$ is defined in (21). If $\gamma\subset\Gamma_{n}$ then
changing the variable $t$ to the variable $\lambda$ in the integral (28),
using the formulas (29), (22), (21) and (36) and taking into account the
equalities $\lambda_{n}(-t)=\lambda_{n}(t),$ $\frac{dt}{d\lambda}%
=-\frac{F^{^{\prime}}(\lambda)}{p(\lambda)}$ which follows from (4) we obtain%
\begin{equation}%
%TCIMACRO{\tint \limits_{\delta}}%
%BeginExpansion
{\textstyle\int\limits_{\delta}}
%EndExpansion
\frac{1}{\alpha_{n}(t)}(f,\Psi_{n,t}^{\ast})_{\mathbb{R}}\Psi_{n,t}(x)dt=%
%TCIMACRO{\tint \limits_{\gamma}}%
%BeginExpansion
{\textstyle\int\limits_{\gamma}}
%EndExpansion
(\Phi_{+}(x,\lambda)F_{-}(\lambda,f)+\Phi_{-}(x,\lambda)F_{+}(\lambda
,f))\frac{1}{\varphi p(\lambda)}d\lambda.
\end{equation}
Therefore, by (28) and (33) we have $P(\gamma)=\widetilde{P}(\gamma).$ Hence
in the both cases the projection of $L(q)$ and its norm are defined by (28).
Moreover, one can readily see that the curve $\gamma$ used in (28) is the same
with the regular spectral arc defined in [4]. Thus in [15] and [4] the
spectral singularities was defined as follows.

\begin{definition}
We say that $\lambda\in\sigma(L(q))$ is a spectral singularity of $L(q)$ if
for all $\varepsilon>0$\ there exists a sequence $\{\gamma_{n}\}$ of the
regular spectral arcs $\gamma_{n}\subset\{z\in\mathbb{C}:\mid z-\lambda
\mid<\varepsilon\}$ such that
\begin{equation}
\lim_{n\rightarrow\infty}\parallel P(\gamma_{n})\parallel=\infty.
\end{equation}

\end{definition}

In the similar way, we defined in [19] the spectral singularity at infinity.

\begin{definition}
We say that the operator $L$ has a spectral singularity at infinity if there
exists a sequence $\{\gamma_{n}\}$ of the regular spectral arcs such that
$d(0,\gamma_{n})\rightarrow\infty$ as $n\rightarrow\infty$ and (38) holds,
where $d(0,\gamma_{n})$ is the distance from the point $(0,0)$ to the arc
$\gamma_{n}.$
\end{definition}

The following proposition follows immediately from (28) and the definitions 1
and 2

\begin{proposition}
$(a)$ $\lambda\in\sigma(L(q))$ is a spectral singularity of $L(q)$ if and only
if there exist $n\in\mathbb{Z}$ and sequence $\{t_{k}\}\subset(-\pi
,\pi]\backslash\overline{A}$ such that $\lambda_{n}(t_{k})\rightarrow\lambda$
and $\alpha_{n}(t_{k})\rightarrow0$ as $k\rightarrow\infty.$

$(b)$ The operator $L$ has a spectral singularity at infinity if and only if
there exist sequences $\left\{  n_{k}\right\}  \in\mathbb{Z}$ and
$\{t_{k}\}\subset(-\pi,\pi]\backslash\overline{A}$ such that $\alpha_{n_{k}%
}(t_{k})\rightarrow0$ as $k\rightarrow\infty.$
\end{proposition}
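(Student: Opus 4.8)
The plan is to derive both equivalences directly from the norm identity recorded in (29), namely $\|P(\gamma)\|=\sup_{t\in\delta}|\alpha_{n}(t)|^{-1}$ for a regular spectral arc $\gamma\subset\Gamma_{n}$ with $\delta=\{t\in(-\pi,\pi]:\lambda_{n}(t)\in\gamma\}$, together with two structural facts. First, each regular spectral arc in the sense of Definition 2.4 of [4] (the class appearing in (28)) lies in a single component $\Gamma_{n}$: on such an arc $F'(\lambda)\neq 0$, so every eigenvalue on it is simple, and since $t\mapsto\lambda_{n}(t)$ is continuous the index cannot jump along the connected arc without producing a coincidence of two branches, i.e. a multiple eigenvalue. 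Second, by the asymptotics (10)--(12) only finitely many of the curves $\Gamma_{n}$ meet any bounded subset of $\mathbb{C}$, because for $|n|$ large $\Gamma_{n}$ lies within $O(|n|)$ of $\{(2\pi n+t)^{2}:t\in[0,\pi]\}$, a set whose distance to the origin tends to $\infty$. Granting these, the proposition is essentially a matter of unwinding Definitions 1 and 2.

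For the forward implication of $(a)$: if $\lambda$ is a spectral singularity, then taking $\varepsilon=1/k$ in Definition 1 and diagonalizing we obtain one sequence $\{\gamma_{k}\}$ of regular spectral arcs with $\gamma_{k}\subset\{z:|z-\lambda|<1/k\}$ and $\|P(\gamma_{k})\|\to\infty$. By the first fact $\gamma_{k}\subset\Gamma_{n_{k}}$, and since all $\gamma_{k}$ lie in a fixed ball about $\lambda$, the second fact forces $\{n_{k}\}$ to be bounded; passing to a subsequence we may take $n_{k}\equiv n$. Then, by (29), each $\gamma_{k}$ contains a point $\lambda_{n}(t_{k})$ with $t_{k}\in\delta_{k}$ and $|\alpha_{n}(t_{k})|^{-1}>k$, so $\alpha_{n}(t_{k})\to 0$, while $\lambda_{n}(t_{k})\in\gamma_{k}$ gives $\lambda_{n}(t_{k})\to\lambda$. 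Finally $t_{k}\notin\overline{A}$: simplicity of $\lambda_{n}(t_{k})$ on the regular arc gives $t_{k}\notin A$, and any $t_{k}$ equal to $0$ or $\pi$ (which can occur only as an arc endpoint, since differentiating (4) yields $\lambda_{n}'(0)=\lambda_{n}'(\pi)=0$ at a simple eigenvalue whereas regularity demands $\lambda'\neq 0$ on the open parameter interval) may be replaced by a nearby point outside the countable set $\overline{A}$, using continuity of $\alpha_{n}$ near that simple eigenvalue.

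For the converse of $(a)$: given $n$ and $\{t_{k}\}\subset(-\pi,\pi]\setminus\overline{A}$ with $\lambda_{n}(t_{k})\to\lambda$ and $\alpha_{n}(t_{k})\to 0$, fix $\varepsilon>0$. Because $t_{k}\notin\overline{A}$ we have $F'(\lambda_{n}(t_{k}))\neq 0$ (so $\lambda_{n}$ is analytic near $t_{k}$ and $\lambda_{n}(t_{k})$ is simple) and $t_{k}\neq 0,\pi$ (so $\lambda_{n}'(t_{k})=-2\sin t_{k}/F'(\lambda_{n}(t_{k}))\neq 0$ by (4)); hence all the regularity requirements of Definition 2.4 of [4] persist on a short closed interval $I_{k}\ni t_{k}$, and for $k$ large $\gamma_{k}:=\lambda_{n}(I_{k})$ is a regular spectral arc inside $\{z:|z-\lambda|<\varepsilon\}$. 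By (29), $\|P(\gamma_{k})\|\geq|\alpha_{n}(t_{k})|^{-1}\to\infty$, which is exactly the condition of Definition 1 for this $\varepsilon$; since $\varepsilon$ was arbitrary, $\lambda$ is a spectral singularity. Part $(b)$ runs in parallel: in the forward direction $d(0,\gamma_{k})\to\infty$ together with the second structural fact forces $|n_{k}|\to\infty$, and (29) produces $t_{k}\in\delta_{k}\setminus\overline{A}$ with $\alpha_{n_{k}}(t_{k})\to 0$; conversely, from sequences with $\alpha_{n_{k}}(t_{k})\to 0$ (the pertinent case being $|n_{k}|\to\infty$, the bounded case falling under $(a)$) one builds small regular spectral arcs $\gamma_{k}$ around $\lambda_{n_{k}}(t_{k})$ exactly as above, and $d(0,\gamma_{k})\to\infty$ because $\lambda_{n_{k}}(t_{k})=(2\pi n_{k}+t_{k})^{2}+O(|n_{k}|)$ by (10)--(12), so Definition 2 applies.

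I expect the only genuinely delicate points to be the two structural facts — that a regular spectral arc lies in a single $\Gamma_{n}$, and that only finitely many $\Gamma_{n}$ intersect a bounded set — since both rest on the global continuous numbering of the eigenvalue branches $\lambda_{n}(t)$ and on the asymptotics (10)--(12), and on them hinges the index bookkeeping (a fixed $n$ in $(a)$, $|n_{k}|\to\infty$ in $(b)$); the minor edge cases $t_{k}\in\{0,\pi\}$ are disposed of by a perturbation using the countability of $\overline{A}$ and continuity of $\alpha_{n}$ at simple eigenvalues. Once these are in place, everything else is a direct read-off of the norm formula in (29), which is why the proposition may fairly be said to follow immediately from (28) and Definitions 1 and 2.
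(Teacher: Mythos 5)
The paper offers no real proof of this proposition (it is simply declared to ``follow immediately from (28) and the definitions 1 and 2''), and your write-up is a faithful fleshing-out of exactly that route: the norm identity $\left\Vert P(\gamma)\right\Vert =\sup_{t\in\delta}\left\vert \alpha_{n}(t)\right\vert ^{-1}$ (which is the paper's (28), not (29) -- (29) is the formula for $\alpha_{n}$ itself), plus the bookkeeping that a regular spectral arc sits inside a single $\Gamma_{n}$ and that only finitely many $\Gamma_{n}$ meet a bounded set. Your treatment of part $(a)$ is complete and correct in both directions, including the two details the one-line ``proof'' in the paper silently uses: $t\notin A$ forces $F^{\prime}(\lambda_{n}(t))\neq0$ (otherwise $\lambda_{n}(t)$ would be one of the $\mu_{j}$ and $t=\pm t_{j}\in A$), and the endpoint cases $t_{k}=0,\pi$ are removable by continuity of $\alpha_{n}$ at a simple eigenvalue together with countability of $\overline{A}$.

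The one genuine gap is in the converse direction of $(b)$: your parenthetical ``the bounded case falling under $(a)$'' does not do what is needed. If $\{n_{k}\}$ stays bounded, part $(a)$ only produces a \emph{finite} spectral singularity, whereas Definition 2 demands regular spectral arcs with $d(0,\gamma_{k})\rightarrow\infty$; nothing in that case supplies such arcs, so the literal ``if'' direction of $(b)$ is not established (and, as stated, is doubtful: an operator with a single finite spectral singularity and uniformly bounded projections near infinity would satisfy the right-hand side of $(b)$ with constant $n_{k}$ without having a singularity at infinity). The statement is evidently meant to be read with $\left\vert n_{k}\right\vert \rightarrow\infty$, equivalently $\lambda_{n_{k}}(t_{k})\rightarrow\infty$, in line with Definition 2, and under that reading your argument -- small regular arcs around $\lambda_{n_{k}}(t_{k})$ with $d(0,\gamma_{k})\rightarrow\infty$ via (10)--(12) -- is correct; but you should either invoke that reading explicitly or delete the claim that the bounded case is covered by $(a)$, since as written it conflates a finite spectral singularity with one at infinity.
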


Thus the spectral singularities and hence the spectrality of $L(q)$ is
connected with the uniform boundedness of $\frac{1}{\alpha_{n}},$ while as we
see below the spectral expansion of $L(q)$ is essentially connected with the
integrability of this function.

\textbf{(d) On the problems of the spectral expansion of} $L.$ By Gelfand's
Lemma (see [3]) for every $f\in L_{2}(-\infty,\infty)$ there exists $f_{t}(x)$
such that
\begin{equation}
f(x)=\frac{1}{2\pi}\int\limits_{0}^{2\pi}f_{t}(x)dt,
\end{equation}%
\begin{equation}
f_{t}(x)=\sum\limits_{k=-\infty}^{\infty}f(x+k)e^{ikt},\text{ }\int_{-\infty
}^{\infty}\left\vert f(x)\right\vert ^{2}dx=\frac{1}{2\pi}\int\limits_{0}%
^{2\pi}\int\limits_{0}^{1}\left\vert f_{t}(x)\right\vert ^{2}dxdt
\end{equation}
and%
\begin{equation}
f_{t}(x+1)=e^{it}f_{t}(x).
\end{equation}

Let $h\in(0,1)\backslash A,$ and let $l$\ be a continuous curve joining the
points $-\pi+h$ and $\pi+h$ and satisfying
\begin{equation}
l\subset Q_{h}\backslash A,\text{ }%
\end{equation}
where $Q_{h}$ and $A$ are defined in (8) and (15) respectively. If $f$ is a
compactly supposed and continuous function, then $f_{t}(x)$ is an analytic
function of $t$ in a neighborhood of $\overline{D}$ for each $x,$ where
$\overline{D}$ is the closure of the domain enclosed by $l\cup\lbrack
-\pi+h,\pi+h].$ Hence the Cauchy's theorem and (39), (41) give
\begin{equation}
f(x)=\frac{1}{2\pi}\int_{l}f_{t}(x)dt.
\end{equation}
On the other hand, for each $t\in l$ we have a decomposition
\begin{equation}
f_{t}(x)=\sum_{n\in\mathbb{Z}}a_{n}(t)\Psi_{n,t}(x)
\end{equation}
of $f_{t}(x)$ by the basis $\{\Psi_{n,t}:n\in\mathbb{Z}\},$ where
$a_{n}(t)=\int_{0}^{1}f_{t}(x)\overline{X_{n,t}(x)}dx=\tfrac{1}{\alpha_{n}%
(t)}(f_{t},\Psi_{n,t}^{\ast})$ (see \textbf{(b)}). Here $\Psi_{n,t}$ and
$X_{n,t}$ can be extended to $(-\infty,\infty)$ by%
\begin{equation}
\Psi_{n,t}(x+1)=e^{it}\Psi_{n,t}(x)\text{ }\And X_{n,t}(x+1)=e^{i\overline{t}%
}X_{n,t}(x).
\end{equation}
Then the following equality holds
\begin{equation}
\int_{0}^{1}f_{t}(x)\overline{X_{n,t}(x)}dx=\int_{-\infty}^{\infty
}f(x)\overline{X_{n,t}(x)}dx
\end{equation}
(see [3]). Using (44) in (43), we get
\begin{equation}
f(x)=\frac{1}{2\pi}\int\limits_{l}f_{t}(x)dt=\frac{1}{2\pi}\int\limits_{l}%
\sum_{n\in\mathbb{Z}}a_{n}(t)\Psi_{n,t}(x)dt.
\end{equation}

In [14, 16, 18] we proved that for the continuous curve $l\subset
Q_{h}\backslash A$ the series in (47) can be integrated term by term:
\begin{equation}
\int\limits_{l}\sum_{n\in\mathbb{Z}}a_{n}(t)\Psi_{n,t}(x)dt=\sum
_{n\in\mathbb{Z}}\int\limits_{l}a_{n}(t)\Psi_{n,t}(x)dt.
\end{equation}
Therefore we have
\begin{equation}
f(x)=\frac{1}{2\pi}\sum_{n\in\mathbb{Z}}\int\limits_{l}a_{n}(t)\Psi
_{n,t}(x)dt,
\end{equation}
where the series converges in the norm of $L_{2}(a,b)$ for every
$a,b\in\mathbb{R}.$

To get the spectral expansion in the term of $t$ from (49) we need to replace
the integrals over $l$ by the integral over $(-\pi,\pi]$. As we see in the
next section (see Lemma 1), expressions $a_{n}(t)\Psi_{n,t}$ and $\frac
{1}{\alpha_{n}(t)}$ are piecewise continuous on $(-\pi,\pi].$ If $\alpha
_{n}(t)\rightarrow0$ as $t\rightarrow c$ for some $c\in(-\pi,\pi]$ then
$\frac{1}{\alpha_{n}(t)}\rightarrow\infty$ and $a_{n}(t)\Psi_{n,t}%
(x)\rightarrow\infty$ for some $f$ and $x.$ By Proposition 1 the boundlessness
of $\frac{1}{\alpha_{n}}$ is the characterization of the spectral
singularities. Moreover, the considerations of the spectral singularities,
that is, the consideration of the boundlessness of $\frac{1}{\alpha_{n}}$ play
only the crucial rule for the investigations of the spectrality of $L$. On the
other hand, the papers [2, 4, 19] show that, in general, the Hill operator $L$
is not a spectral operator. Since $\frac{1}{\alpha_{n}}$ may have an
integrable boundlessness, its boundlessness is not a criterion for the
nonexistence of the integrals
\begin{equation}
\int\limits_{\delta}\frac{1}{\alpha_{n}(t)}(f,\Psi_{n,t}^{\ast})_{\mathbb{R}%
}\Psi_{n,t}(x)dt
\end{equation}
for $\delta\subset(-\pi,\pi].$ Hence to construct the spectral expansion for
the operator $L$ we need to introduce a new concept connected with the
existence of the integrals (50) for $\delta\subset(-\pi,\pi]$ which can be
reduced to the investigation of the integrability of $\frac{1}{\alpha_{n}}$
(see Remark 1 in Section 3). Therefore we introduce the following notions,
independent of the choice of $f,$ for the construction of the spectral expansion.

\begin{definition}
We say that a point $\lambda_{0}\in\sigma(L_{\pm t_{0}})\subset\sigma(L)$ is
an essential spectral singularity (ESS) of the operator $L$ if there exists
$n\in\mathbb{Z}$ such that $\lambda_{0}=\lambda_{n}(t_{0})$ and for each
$\varepsilon$ the function $\frac{1}{\alpha_{n}}$ is not integrable on
$\left(  (t_{0}-\varepsilon,t_{0}+\varepsilon)\cup(-t_{0}-\varepsilon
,-t_{0}+\varepsilon)\right)  \backslash A_{n}$.
\end{definition}

In this paper we investigate the spectral expansion by using the concept ESS.
First (in Section 2) we consider the concept ESS. Then, in Section 3, we
construct the spectral expansion for the Hill operator.

\section{Spectral Singularity and ESS}

By (24), (28) and the definitions 1 and 3 to consider the spectral
singularities and ESS we need to investigate the normalized eigenfunctions
$\Psi_{n,t}$ and $\Psi_{n,t}^{\ast}$ and then $\frac{1}{\alpha_{n}(t)}.$
Therefore, first we prove the following lemma.

\begin{lemma}
$(a)$ For each fixed $x$ the functions $\left\vert \Psi_{n,t}(x)\right\vert $,
$\left\vert \Psi_{n,t}^{\ast}(x)\right\vert $ and $\left\vert \alpha
_{n}\right\vert ,$ where $\Psi_{n,t}(x)$, $\Psi_{n,t}^{\ast}(x)$ and
$\alpha_{n}$ are defined in (22) and (24), are continuous functions at
$(-\pi,0)\cup(0,\pi)$.

$(b)$ If the geometric multiplicity of the eigenvalues $\lambda_{n}(0)$ and
$\lambda_{n}(\pi)$ is $1,$ then for each fixed $x$ the functions $\Psi
_{n,t}(x)$, $\Psi_{n,t}^{\ast}(x)$ and $\alpha_{n}$ are continuous at $0$ and
$\pi$\ respectively.

$(c)$ For each fixed $x,$ $\Psi_{n,t}(x)$ and $\Psi_{n,t}^{\ast}(x)$ are
bounded functions at $(-\pi,0)\cup(0,\pi)$.

$(d)$ The function $\frac{1}{\left\vert \alpha_{n}\right\vert }$ is continuous
in $\left(  (-\pi,0)\cup(0,\pi)\right)  \backslash A_{n},$ where $A_{n}$ is
defined in (15). For each fixed $x$ the functions $\Psi_{n,t}(x)$, $\Psi
_{n,t}^{\ast}(x)$ and $\alpha_{n}$ , $\frac{1}{\alpha_{n}}$ are piecewise
continuous at $(-\pi,0)\cup(0,\pi)$.
\end{lemma}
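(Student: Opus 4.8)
The plan is to trace all claimed regularity properties back to the analytic dependence of the fundamental solutions $\theta(x,\lambda)$, $\varphi(x,\lambda)$ and their derivatives on $\lambda$, combined with the representations (22)--(24) and the formulas (30), (33) for $\alpha_n(t)$. The starting point is the standard fact that $\theta(1,\lambda),\varphi(1,\lambda),\theta'(1,\lambda),\varphi'(1,\lambda)$ and the Hill discriminant $F(\lambda)$ are entire functions of $\lambda$, hence so is $F'(\lambda)$; moreover for fixed $x$ the solutions $\theta(x,\lambda),\varphi(x,\lambda)$ are entire in $\lambda$ and depend continuously (indeed analytically) on $\lambda$ uniformly on compact sets. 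The one thing that is \emph{not} globally continuous is the branch $\lambda_n(t)$: at a point where two sheets $\Gamma_n$, $\Gamma_{-n}$ (or $\Gamma_{-(n+1)}$) meet, i.e. at a double eigenvalue with $F'(\lambda)=0$, the function $t\mapsto\lambda_n(t)$ can have a corner but, by the numbering convention recalled in part (a) of the preliminaries, it is still \emph{continuous} on $[0,\pi]$ (and extended to be even in $t$). So the composite $t\mapsto\lambda_n(t)$ is continuous on $(-\pi,\pi]$, and therefore every entire function of $\lambda$ evaluated at $\lambda=\lambda_n(t)$ is a continuous function of $t$ on $(-\pi,\pi]$.

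For part (a), I would argue as follows. Fix $x$ and fix $t_*\in(-\pi,0)\cup(0,\pi)$. By continuity of $\lambda_n(t)$, the numerator of $\Phi_t(x,\lambda_n(t))$ in (21), namely $\varphi\,\theta(x,\lambda_n(t))+(e^{it}-\theta)\varphi(x,\lambda_n(t))$, is a continuous function of $t$ near $t_*$ (it is a continuous function of $(t,\lambda_n(t))$ and $e^{it}$). Hence $\|\Phi_t(\cdot,\lambda_n(t))\|$ is continuous in $t$ as well (the $L_2[0,1]$-norm of a family of functions depending continuously on a parameter, with continuous dependence uniform on the compact interval $[0,1]$ in $x$). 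The delicate point is the denominator: $\Psi_{n,t}(x)=\Phi_t(x,\lambda_n(t))/\|\Phi_t(\cdot,\lambda_n(t))\|$ is continuous wherever $\|\Phi_t(\cdot,\lambda_n(t))\|\neq0$, and this norm vanishes precisely when $\Phi_t(\cdot,\lambda_n(t))\equiv0$, i.e. when $\Phi_t$ is not an eigenfunction. Here I would invoke the standard fact (used implicitly already in (22)--(23)) that for $t\neq 0,\pi$ at least one of $\Phi_t$, $G_t$ is a nonzero eigenfunction, so $|\Psi_{n,t}(x)|$ — defined via whichever of (22) or (23) is nondegenerate, and the two agree up to a unimodular constant — is continuous at $t_*$; the same for $\Psi_{n,t}^*(x)$ using $\Phi_{-t}$ (note $(L_t)^*=L_{\bar t}$ and here $t$ is real). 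Then $\alpha_n(t)=(\Psi_{n,t},\Psi_{n,t}^*)$ is an inner product of two families continuous in $t$, hence $|\alpha_n|$ is continuous on $(-\pi,0)\cup(0,\pi)$. Alternatively, and perhaps more cleanly, I would use formula (30): $\alpha_n(t)=-\varphi(1,\lambda_n(t))F'(\lambda_n(t))/(\|\Phi_t(\cdot,\lambda_n(t))\|\|\Phi_{-t}(\cdot,\lambda_n(t))\|)$, whose numerator is a product of entire functions of $\lambda$ evaluated at $\lambda_n(t)$, hence continuous in $t$.

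For part (b), the only obstruction to continuity at $t=0$ and $t=\pi$ was the possible corner in $\lambda_n(t)$ caused by a double eigenvalue; but if the geometric multiplicity of $\lambda_n(0)$ (resp. $\lambda_n(\pi)$) is $1$, then even when the algebraic multiplicity is $2$ the analyticity/Puiseux structure gives a single continuous branch through the point, and by the numbering convention $\lambda_n(t)$ is continuous at $0$ (resp. $\pi$) as a function on the full neighbourhood. Then the argument of part (a) applies verbatim with $t_*=0$ or $\pi$, giving continuity of $\Psi_{n,t}(x)$, $\Psi_{n,t}^*(x)$ and $\alpha_n$ there. For part (c), I would combine the asymptotic formula (17)--(19), which gives uniform boundedness of $\Psi_{n,t}$, $\Psi_{n,t}^*$ for $t$ in $Q_h$ and $|n|$ large, with the fact that on each closed subinterval of $(-\pi,0)\cup(0,\pi)$ there are only finitely many "bad" values of $t$ (the finitely many points of $A$ in the compact subinterval, by the statement that the accumulation points of $A\cap Q$ are $0$ and $\pi$), and near each of those the continuous (by part (a)) function $|\Psi_{n,t}(x)|$ stays bounded; a compactness argument then yields boundedness on the whole of $(-\pi,0)\cup(0,\pi)$. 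For part (d), continuity of $1/|\alpha_n|$ on $((-\pi,0)\cup(0,\pi))\setminus A_n$ follows from part (a) together with (17): on the complement of $A_n$ the eigenvalue $\lambda_n(t)$ is simple, so $F'(\lambda_n(t))\neq0$ and $\varphi(1,\lambda_n(t))\neq0$ (the latter because $\alpha_n(t)\ne 0$ there, as the biorthogonal system exists), hence $\alpha_n(t)\neq0$ by (30); then $1/|\alpha_n|$ is the reciprocal of a nonvanishing continuous function. The piecewise continuity of $\Psi_{n,t}(x)$, $\Psi_{n,t}^*(x)$, $\alpha_n$, $1/\alpha_n$ on $(-\pi,0)\cup(0,\pi)$ is now immediate: these functions are continuous off the finite set $A$ in each compact subinterval, and at each point of $A$ the one-sided limits exist because on each side $\lambda_n(t)$ is given by a single analytic branch. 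The main obstacle I anticipate is handling cleanly the possible degeneracy $\Phi_t(\cdot,\lambda_n(t))\equiv 0$: one must switch between the representations (22) and (23) (equivalently between $\Phi_t$ and $G_t$) to ensure a nonzero eigenfunction is always available, and verify that the resulting $|\Psi_{n,t}(x)|$ is well-defined and does not jump where one representation degenerates and the other takes over.
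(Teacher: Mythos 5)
Your parts (a) and (d) follow essentially the paper's route (continuity of the branch $\lambda_n(t)$, switching between the representations (22) and (23), geometric simplicity for $t\neq0,\pi$ so that the two normalized expressions agree up to a unimodular factor, finiteness of the exceptional sets), but parts (b) and (c) have genuine gaps. In (c) your argument does not reach the actual difficulty: $(-\pi,0)\cup(0,\pi)$ is not compact, and the asymptotics (17)--(19) are uniform only on $Q_h$, i.e. at distance $h$ from $0,\pm\pi$, and only for $|n|>N(h)$ with constants depending on $h$; for a fixed $n$ they say nothing about $t\to0$ or $t\to\pi$. That is exactly where boundedness is nontrivial: if the geometric multiplicity of $\lambda_n(0)$ is $2$, all entries of (55) vanish, in particular $\varphi(\lambda_n(0))=0$, so both the numerator $\Phi_t(x,\lambda_n(t))$ and the denominator $\Vert\Phi_t(\cdot,\lambda_n(t))\Vert$ tend to zero as $t\to0$, and one needs a quantitative two-sided bound on the ratio. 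The paper supplies it: using the uniform $L_2$-linear independence of $\theta(\cdot,\lambda)$ and $\varphi(\cdot,\lambda)$ it proves $\left\vert \Phi_t(x,\lambda_n(t))\right\vert^2\leq M_1\bigl(\left\vert\varphi(\lambda_n(t))\right\vert^2+\left\vert e^{it}-\theta(\lambda_n(t))\right\vert^2\bigr)$ and $\Vert\Phi_t(\cdot,\lambda_n(t))\Vert^2\geq\varepsilon\bigl(\left\vert\varphi(\lambda_n(t))\right\vert^2+\left\vert e^{it}-\theta(\lambda_n(t))\right\vert^2\bigr)$ for $0<\left\vert t\right\vert<\delta$, which together with (56) bounds $\Psi_{n,t}(x)$ in a deleted neighborhood of $0$ (similarly at $\pi$). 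Nothing in your sketch produces such an estimate, and your "compactness argument" cannot, since the problem is at the non-interior points $0,\pm\pi$.

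In (b) you misidentify both the obstruction and the role of the hypothesis. Continuity of $\lambda_n(t)$ at $0,\pi$ holds by the numbering convention irrespective of multiplicities, so the "corner" is not the issue; and part (a) cannot be applied "verbatim" at $t_*=0,\pi$, because its key step (that $B\cap C=\varnothing$, i.e. at least one of $\Phi_t$, $G_t$ is a nonzero eigenfunction) was derived from (4) precisely under the assumption $t\neq0,\pi$, and in any case (a) yields only continuity of the moduli, whereas (b) asserts continuity of $\Psi_{n,t}(x)$, $\Psi_{n,t}^{\ast}(x)$ and $\alpha_n$ themselves. The correct use of the hypothesis, as in the paper, is that geometric multiplicity $1$ at $\lambda_n(0)$ forces at least one entry of (55) to be nonzero, i.e. at least one of $\Phi_0(\cdot,\lambda_n(0))$, $G_0(\cdot,\lambda_n(0))$ is not the zero function; that single representation is then nondegenerate throughout a neighborhood of $0$ and gives continuity there (similarly at $\pi$). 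A smaller instance of the same oversight appears in your (a): $\alpha_n$ is not an inner product of families continuous on all of $(-\pi,0)\cup(0,\pi)$ (the normalized eigenfunctions can fail to be defined or continuous at the finitely many points of $B\cup C$), and your "cleaner" alternative via (30) degenerates at exactly those points, since $\Vert\Phi_t(\cdot,\lambda_n(t))\Vert$ vanishes there; the paper circumvents this by pairing $\Phi_t$ with $G_{-t}$ and $G_t$ with $\Phi_{-t}$, whose exceptional sets $B_1$, $C_1$ are disjoint.
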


\begin{proof}
$(a)$ It is well-known that [1] for $t\in(-\pi,0)\cup(0,\pi),$ the operator
$L_{t}$ cannot have two linearly independent eigenfunctions corresponding to
one eigenvalue $\lambda_{n}(t)$. Indeed, otherwise, both solutions
$\varphi(x,\lambda_{n}(t))$ and $\theta(x,\lambda_{n}(t))$ satisfy the
boundary condition (2). But, it implies that%
\begin{equation}
\varphi^{\prime}(1,\lambda_{n}(t))=e^{it},\text{ }\theta(1,\lambda
_{n}(t))=e^{it}%
\end{equation}
which contradicts (4) for $t\neq0,\pi.$

As we noted\textbf{ }in introduction (see \textbf{(a))} $\lambda_{n}(t)$ is a
continuous function. Therefore it follows from (21) that $\Phi_{t}%
(x,\lambda_{n}(t))$ for each fixed $x$ depend continuously on $t.$ Moreover,
by the uniform boundedness theorem $\left\Vert \Phi_{t}(\cdot,\lambda
_{n}(t))\right\Vert $ continuously depend on $t.$ Since $\theta(x,\lambda)$
and $\varphi(x,\lambda)$ are linearly independent solution, it follows from
(21) that $\left\Vert \Phi_{t}(\cdot,\lambda_{n}(t))\right\Vert =0$ if and
only if $\varphi(\lambda_{n}(t))=0$ and $e^{it}-\theta(\lambda_{n}(t))=0,$
which is possible for at most finite number of $t.$ Thus there may exists a
finite set $B=\left\{  u_{1},\text{ }u_{2},...,u_{k}\right\}  \subset\left(
(-\pi,0)\cup(0,\pi)\right)  $ such that $\left\Vert \Phi_{t}(\cdot,\lambda
_{n}(t))\right\Vert =0$ for $t\in B.$ Hence
\begin{equation}
\frac{\Phi_{t}(x,\lambda_{n}(t))}{\left\Vert \Phi_{t}(\cdot,\lambda
_{n}(t))\right\Vert }%
\end{equation}
is continuous at $\left(  (-\pi,0)\cup(0,\pi)\right)  \backslash B.$ In the
same way we prove that there may exists a finite set $C=\left\{  v_{1},\text{
}v_{2},...,v_{m}\right\}  $ such that $\left\Vert G_{t}(\cdot,\lambda
_{n}(t))\right\Vert =0$ for $t\in C$ and
\begin{equation}
\frac{G_{t}(x,\lambda_{n}(t))}{\left\Vert G_{t}(\cdot,\lambda_{n}%
(t))\right\Vert }%
\end{equation}
is continuous at $\left(  (-\pi,0)\cup(0,\pi)\right)  \backslash C,$ where
$G_{t}$ is defined in (20). For

$t\in\left(  (-\pi,0)\cup(0,\pi)\right)  \backslash(C\cup B)$ the operator
$L_{t}$ has unique linearly independent eigenfunction. Hence there exists a
function $c(t)$ such that $\left\vert c(t)\right\vert =1$ and
\begin{equation}
\frac{\Phi_{t}(x,\lambda_{n}(t))}{\left\Vert \Phi_{t}(\cdot,\lambda
_{n}(t))\right\Vert }=c(t)\frac{G_{t}(x,\lambda_{n}(t))}{\left\Vert
G_{t}(\cdot,\lambda_{n}(t))\right\Vert },\text{ }\forall t\in\left(
(-\pi,0)\cup(0,\pi)\right)  \backslash(C\cup B).
\end{equation}
On the other hand if $t\in B\cap C,$ then (51) holds which is impossible for
$t\neq0,\pi.$ It means that $\left(  (-\pi,0)\cup(0,\pi)\right)  \cap(B\cap
C)$ is an empty set. Therefore, it follows from (54) that for each fixed $x$
the absolute value of (52), that is, $\left\vert \Psi_{n,t}(x)\right\vert $ is
continuous at $(-\pi,0)\cup(0,\pi).$ In the same way, the same statements can
be proved for $\Psi_{n,t}^{\ast}(x).$

To prove the continuity of $\left\vert \alpha_{n}\right\vert $ at
$(-\pi,0)\cup(0,\pi)$ we use (22)-(24). By (4) if

$e^{it}-\theta(\lambda_{n}(t))=0$ then $e^{-it}-\varphi^{\prime}(\lambda
_{n}(t))=0.$ Therefore
\[
\left(  \frac{\Phi_{t}(\cdot,\lambda_{n}(t))}{\left\Vert \Phi_{t}%
(\cdot,\lambda_{n}(t))\right\Vert },\frac{\overline{G_{-t}(\cdot,\lambda
_{n}(t))}}{\left\Vert G_{-t}(\cdot,\lambda_{n}(t))\right\Vert }\right)  \text{
}\And\text{ }\left(  \frac{G_{t}(\cdot,\lambda_{n}(t))}{\left\Vert G_{t}%
(\cdot,\lambda_{n}(t))\right\Vert },\frac{\overline{\Phi_{-t}(\cdot
,\lambda_{n}(t))}}{\left\Vert \Phi_{-t}(\cdot,\lambda_{n}(t))\right\Vert
}\right)
\]
are continuous at $\left(  (-\pi,0)\cup(0,\pi)\right)  \backslash B_{1}$ and
$\left(  (-\pi,0)\cup(0,\pi)\right)  \backslash C_{1}$ respectively, where
$B_{1}$ and $C_{1}$ are finite sets and $B_{1}\cap C_{1}=\varnothing.$ Thus
arguing as above, we see that $\left\vert \alpha_{n}\right\vert $ is
continuous at $(-\pi,0)\cup(0,\pi).$

$(b)$ Now suppose that the geometric multiplicity of $\lambda_{n}(0)$ is $1.$
Then at least one of the entry of characteristic determinant (see (3))
\begin{equation}
\left\vert
\begin{array}
[c]{cc}%
\theta(\lambda_{n}(0))-1 & \varphi(\lambda_{n}(0))\\
\text{ }\theta^{\prime}(\lambda_{n}(0)) & \varphi^{\prime}(\lambda
_{n}(0))-e^{it}%
\end{array}
\right\vert
\end{equation}
is not zero, that is, at least one of $\Phi_{0}(\cdot,\lambda_{n}(0))$ and
$G_{0}(\cdot,\lambda_{n}(0))$ is not zero function. Without loss of
generality, assume that $\Phi_{0}(\cdot,\lambda_{n}(0))$ is not zero function.
Then by (22) for each $x,$ $\Psi_{n,t}(x)$ and $\Psi_{n,t}^{\ast}(x)$ is
continuous at $0.$ The continuity of $\alpha_{n}$ follows from (24). In the
same way we prove that they are continuous at $\pi$ if the geometric
multiplicity of $\lambda_{n}(\pi)$ is $1.$

$(c)$ Now we prove that for each $x$ the function $\Psi_{n,t}(x)$ is bounded
at $(-\pi,0)\cup(0,\pi).$ By $(a)$ and $(b)$ it is enough to show that it is
bounded in some deleted neighborhoods of $0$ and $\pi,$ if the geometric
multiplicity of $\lambda_{n}(0)$ and $\lambda_{n}(\pi)$ is $2$ respectively.
We prove it for $t=0.$ The proof for $\ t=\pi$ is similar. If the geometric
multiplicity of $\lambda_{n}(0)$ is $2$ then all entries of (55) are zero and
hence $\varphi(\lambda_{n}(0))=0.$ Then it is clear that there exists
$\delta_{1}>0$ such that
\begin{equation}
\varphi(\lambda_{n}(t))\neq0
\end{equation}
for $0<\left\vert t\right\vert <\delta_{1}.$ Since $\theta(x,\lambda)$ and
$\varphi(x,\lambda)$ are continuous with respect to $(x,\lambda)$ and nonzero
functions and $\lambda_{n}(t)$ is continuous at $t=0,$ there exist constants
$M,$ $\varepsilon$ \ and $\delta_{2}$ such that%
\[
\left\vert \theta(x,\lambda_{n}(t))\right\vert <M,\text{ }\left\vert
\varphi(x,\lambda_{n}(t))\right\vert <M,\text{ }\left\Vert \varphi
(\cdot,\lambda_{n}(t))\right\Vert >\varepsilon,\text{ }\left\Vert \theta
(\cdot,\lambda_{n}(t))\right\Vert >\varepsilon
\]
for $x\in\lbrack0,1]$ and $\left\vert t\right\vert <\delta_{2}.$ On the other
hand, $\theta(x,\lambda)$ and $\varphi(x,\lambda)$ are linearly independent
solutions and hence they are linearly independent elements of $L_{2}(0,1)$
which implies that there exist positive constants $c<1$ and $\delta_{3}$ such
that%
\[
\left\vert \left(  \varphi(\cdot,\lambda_{n}(t)),\theta(\cdot,\lambda
_{n}(t))\right)  \right\vert <c\left\Vert \varphi(\cdot,\lambda_{n}%
(t))\right\Vert \text{ }\left\Vert \theta(\cdot,\lambda_{n}(t))\right\Vert
\]
for $\left\vert t\right\vert <\delta_{3}$. Using these inequalities one can
easily verify that there exist positive constants $M_{1},$ $\varepsilon$ \ and
$\delta$ such that
\begin{align*}
\left\vert \Phi_{t}(x,\lambda_{n}(t))\right\vert ^{2}  &  <M_{1}(\left\vert
\varphi(\lambda_{n}(t))\right\vert ^{2}+\left\vert e^{it}-\theta(\lambda
_{n}(t))\right\vert ^{2}),\\
\text{ }\left\Vert \Phi_{t}(\cdot,\lambda_{n}(t))\right\Vert ^{2}  &
>\varepsilon(\left\vert \varphi(\lambda_{n}(t))\right\vert ^{2}+\left\vert
e^{it}-\theta(\lambda_{n}(t)\right\vert ^{2})
\end{align*}
for $x\in\lbrack0,1]$ and $0<\left\vert t\right\vert <\delta.$ It with (56)
implies that $\Psi_{n,t}(x)$ is bounded in some deleted neighborhood of $0.$
In the same way we prove it for $\Psi_{n,t}^{\ast}(x).$

$(d)$ Since for $t\in\left(  (-\pi,0)\cup(0,\pi)\right)  \backslash A_{n},$
the system $\{\Psi_{n,t}:n\in\mathbb{Z}\}$ is complete we have $\alpha
_{n}(t)\neq0$. Hence $\left\vert \frac{1}{\alpha_{n}}\right\vert $ is
continuous at $\left(  (-\pi,0)\cup(0,\pi)\right)  \backslash A_{n}$. The last
statement of the lemma follows from the fact that the sets $B,$ $C,$ $B_{1},$
$C_{1}$and $A_{n}$ are finite.
\end{proof}

Using Lemma 1 we prove the following

\begin{proposition}
Let $\mathbb{E},$ $\mathbb{S},$ and $\mathbb{M}$ be respectively the sets of
ESS, spectral singularities and multiple eigenvalues of $L_{t}(q)$ for
$t\in(-\pi,\pi].$ Then $\mathbb{E}\subset\mathbb{S}\subset\mathbb{M}.$
\end{proposition}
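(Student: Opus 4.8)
The plan is to prove the two inclusions of $\mathbb{E}\subset\mathbb{S}\subset\mathbb{M}$ separately, handling $\mathbb{S}\subset\mathbb{M}$ first, and to use Proposition 1(a) as the working description of a spectral singularity together with the continuity and measurability information supplied by Lemma 1.

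For $\mathbb{S}\subset\mathbb{M}$, let $\lambda\in\mathbb{S}$. By Proposition 1(a) there are $n\in\mathbb{Z}$ and a sequence $\{t_k\}\subset(-\pi,\pi]\setminus\overline{A}$ with $\lambda_n(t_k)\to\lambda$ and $\alpha_n(t_k)\to0$. Passing to a subsequence, $t_k\to t_0$; since $\alpha_n(-t)=\alpha_n(t)$ and $\lambda_n(-t)=\lambda_n(t)$, I may assume $t_0\in[0,\pi]$, and continuity of $\lambda_n$ gives $\lambda=\lambda_n(t_0)$. If $\lambda_n(t_0)$ is a multiple eigenvalue of $L_{t_0}$ we are done, so assume it is algebraically simple. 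If $t_0\in\{0,\pi\}$, its geometric multiplicity is then $1$ (otherwise it would already be a multiple eigenvalue), so Lemma 1(b) applies; if $t_0\in(0,\pi)$, Lemma 1(a) applies. In either case $|\alpha_n|$ is continuous at $t_0$, so $\alpha_n(t_0)=0$. To get a contradiction I will use that at an algebraically simple eigenvalue the residue of $(L_{t_0}-zI)^{-1}$ at $\lambda_n(t_0)$ is a nonzero rank-one idempotent $P$ with $\operatorname{range}P=\mathbb{C}\,\Psi_{n,t_0}$ and $\operatorname{range}P^{\ast}=\mathbb{C}\,\Psi_{n,t_0}^{\ast}$; writing $Pf=c\,(f,\Psi_{n,t_0}^{\ast})\Psi_{n,t_0}$ and imposing $P^{2}=P$ forces $c\,\alpha_n(t_0)=1$, hence $\alpha_n(t_0)\neq0$, a contradiction. (Equivalently one can read this off from (29) and (33): for $t_0\neq0,\pi$ the functions $\Phi_{t_0}(\cdot,\lambda_n(t_0))$ and $G_{t_0}(\cdot,\lambda_n(t_0))$ are not both identically zero, as shown inside the proof of Lemma 1, so $\alpha_n(t_0)$ is a nonzero multiple of $F^{\prime}(\lambda_n(t_0))$, which is nonzero at a simple eigenvalue.) Therefore $\lambda\in\mathbb{M}$.

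For $\mathbb{E}\subset\mathbb{S}$, let $\lambda_0\in\mathbb{E}$. By Definition 3 there is $n$ with $\lambda_0=\lambda_n(t_0)$ such that $1/\alpha_n$ is not integrable on $V_\varepsilon:=\big((t_0-\varepsilon,t_0+\varepsilon)\cup(-t_0-\varepsilon,-t_0+\varepsilon)\big)\setminus A_n$ for every $\varepsilon>0$. By Lemma 1(d) the function $1/|\alpha_n|$ is piecewise continuous, hence measurable, and $|V_\varepsilon|<\infty$; since a measurable function that is essentially bounded on a set of finite measure is integrable on it, non-integrability forces $1/|\alpha_n|$ to be essentially unbounded on every $V_\varepsilon$. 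Thus $\{t\in V_\varepsilon:1/|\alpha_n(t)|>M\}$ has positive measure for each $M>0$, and, since $\overline{A}=A\cup\{0,\pi\}$ is countable and hence null, $\{t\in V_\varepsilon\setminus\overline{A}:1/|\alpha_n(t)|>M\}$ is still nonempty. Choosing $s_k\in V_{1/k}\setminus\overline{A}$ with $|\alpha_n(s_k)|<1/k$, replacing each $s_k$ by an equivalent quasimomentum in $(-\pi,\pi]$ and passing to a subsequence, we get $s_k\to t_0$ or $s_k\to-t_0$, so $\lambda_n(s_k)\to\lambda_n(\pm t_0)=\lambda_0$ by continuity of $\lambda_n$, while $\alpha_n(s_k)\to0$ and $\{s_k\}\subset(-\pi,\pi]\setminus\overline{A}$. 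Proposition 1(a) then yields $\lambda_0\in\mathbb{S}$.

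I expect the only genuinely delicate point to be the step ``$\alpha_n(t_0)=0$ is impossible at a simple eigenvalue'' in the first inclusion: one must make sure this does not rely on the concrete representations (22)--(24), which can degenerate exactly when $\Phi_{t_0}$ or $G_{t_0}$ is the zero function, which is why I phrase it via the ranges of $P$ and $P^{\ast}$. The remaining steps are routine measure theory combined with Lemma 1 and Proposition 1.
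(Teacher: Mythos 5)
Your proof is correct, but it is organized differently from the paper's, and in one half it does genuinely more work. For $\mathbb{E}\subset\mathbb{S}$ the paper argues contrapositively: if $\lambda$ is not a spectral singularity, then by Proposition 1(a) $\tfrac{1}{\alpha_k}$ is bounded on a deleted neighborhood $D(t_0,\varepsilon)$ for every $k$ with $\lambda_k(t_0)=\lambda$, and by Lemma 1 it is piecewise continuous, hence integrable, so $\lambda\notin\mathbb{E}$. Your direct version (non-integrability of a measurable function on a set of finite measure forces essential unboundedness, which produces a sequence $s_k\notin\overline{A}$ with $\alpha_n(s_k)\to0$ and $\lambda_n(s_k)\to\lambda_0$) is exactly the contrapositive of this, built from the same ingredients (Lemma 1, Proposition 1(a), Definition 3), so that half is essentially the paper's argument. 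The real difference is $\mathbb{S}\subset\mathbb{M}$: the paper disposes of it in one sentence by citing [4, 15] as well known, whereas you supply a self-contained proof — continuity of $|\alpha_n|$ (Lemma 1(a)) or of $\alpha_n$ at $0,\pi$ in the geometric-multiplicity-one case (Lemma 1(b)) gives $\alpha_n(t_0)=0$ at the limit point, which is then contradicted by the fact that at an algebraically simple eigenvalue the Riesz projection is a nonzero rank-one idempotent with range $\mathbb{C}\Psi_{n,t_0}$ and adjoint range $\mathbb{C}\Psi_{n,t_0}^{\ast}$, forcing $\alpha_n(t_0)\neq0$. That argument is sound, and phrasing it through $P$ and $P^{\ast}$ rather than through the representations (29)/(33) is the right instinct, since those formulas can degenerate; your parenthetical alternative is in fact looser than stated, because (29) and (33) involve $\Phi_{\pm t_0}$ and $G_{\pm t_0}$ respectively, and any one of these four functions may vanish, so "$\alpha_n(t_0)$ is a nonzero multiple of $F'(\lambda_n(t_0))$" needs a short case analysis there — but since you rely on the projection argument this is only a cosmetic point. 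In short: your proof buys independence from the external references at the cost of length; the paper buys brevity by outsourcing $\mathbb{S}\subset\mathbb{M}$.
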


\begin{proof}
If $\lambda\in\sigma(L_{t_{0}}(q))$ is not a spectral singularity then by
Proposition 1(a), $\frac{1}{\alpha_{k}}$ is bounded in some deleted
neighborhood $D(t_{0},\varepsilon)$ of $t_{0}$ for all indices $k$ such that
$\lambda_{k}(t_{0})=\lambda,$ where
\begin{equation}
D(t_{0},\varepsilon)=[t_{0}-\varepsilon,t_{0})\cup(t_{0},t_{0}+\varepsilon],
\end{equation}
and by Lemma 1, it is piecewise continuous. Therefore $\frac{1}{\alpha_{k}}$
is integrable in $D(t_{0},\varepsilon),$ and hence, by Definition 3,
$\lambda\notin\mathbb{E}$. The inclusion $\mathbb{S}\subset\mathbb{M}$ is
well-known (see [4, 15]).
\end{proof}

Proposition 2 shows that to study ESS we need to investigate the integral
\begin{equation}%
%TCIMACRO{\tint \limits_{D(t_{0},\varepsilon)}}%
%BeginExpansion
{\textstyle\int\limits_{D(t_{0},\varepsilon)}}
%EndExpansion
\frac{1}{\alpha_{k}(t)}dt
\end{equation}
when $\lambda_{k}(t_{0})$ is a multiple eigenvalue. Let $\Lambda$ be a
multiple eigenvalues of the operator $L_{t_{0}}.$ The set
\begin{equation}
\mathbb{T}(\Lambda)=:\left\{  k\in\mathbb{Z}:\lambda_{k}(t_{0})=\Lambda
\right\}
\end{equation}
is finite since the multiplicity of the eigenvalues of $L_{t_{0}}$ is finite.
Let $p(\Lambda)$ be the multiplicity of the eigenvalue $\Lambda$ of $L_{t_{0}%
}.$ It follows from (4) that $\Lambda$ is also a multiple eigenvalues of
multiplicity $p(\Lambda)$ of $L_{-t_{0}}$ too and $\Lambda\notin\sigma(L_{t})$
for $t\neq\pm t_{0}.$

Now, taking into account the continuity of $\lambda_{k}$, using the implicit
function theorem for (4) and then the definitions 1 and 3 we prove the following

\begin{proposition}
Let $\lambda_{0}$ be a multiple eigenvalue of $L_{t_{0}}$ of multiplicity
$m>1.$

$(a)$ There exists $\varepsilon>0$ such that for $k\in\mathbb{T}(\lambda_{0})$
and $t\in D(t_{0},\varepsilon)$ the eigenvalues $\lambda_{k}(t)$ are simple
and the followings hold
\begin{align}
\lambda_{k}(t)-\lambda_{0}  &  \sim(t-t_{0})^{\frac{1}{m}}\text{ if }t_{0}%
\neq0,\pi,\\
\lambda_{k}(t)-\lambda_{0}  &  \sim(t-t_{0})^{\frac{2}{m}}\text{ if }%
t_{0}=0,\pi
\end{align}
as $t\rightarrow t_{0},$ where $f(t)\sim g(t)$ as $t\rightarrow t_{0}$ means
that there exist positive constants $\delta,$ $c_{1}$ and $c_{2}$ such that
$c_{1}\left\vert g(t)\right\vert \leq\left\vert f(t)\right\vert \leq
c_{2}\left\vert g(t)\right\vert $ for all $t\in D(t_{0},\delta).$

$(b)$ Let $\alpha_{k}(t)\sim(t-t_{0})^{\beta}$ for all $k\in\mathbb{T}%
(\lambda_{0}),$ where $\beta$ is a real number. If $\beta=0$ then $\lambda
_{0}$ is not a spectral singularity. If $0<\beta<1$ then $\lambda_{0}$ is a
spectral singularity of $L$ and is not an ESS. If $\beta\geq1$ then
$\lambda_{0}$ is an ESS.
\end{proposition}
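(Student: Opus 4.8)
The plan for part~$(a)$ is to read off the local structure of the equation $F(\lambda)=2\cos t$ at $(\lambda_{0},t_{0})$. Since the multiple eigenvalues of $L_{t}$ are exactly the common roots of $(4)$ and $F^{\prime}(\lambda)=0$, the hypothesis that $\lambda_{0}$ is an eigenvalue of $L_{t_{0}}$ of multiplicity $m$ means that $\lambda_{0}$ is a zero of order exactly $m$ of the entire function $F(\lambda)-2\cos t_{0}$, i.e. $F^{\prime}(\lambda_{0})=\cdots=F^{(m-1)}(\lambda_{0})=0\neq F^{(m)}(\lambda_{0})$. I would fix a small disc $U$ about $\lambda_{0}$ containing no other zero of $F(\lambda)-2\cos t_{0}$ and no zero of $F^{\prime}$ other than $\lambda_{0}$ (possible because $F^{\prime}\not\equiv0$ is entire), and apply Rouch\'{e}'s theorem to $F(\lambda)-2\cos t$ against $F(\lambda)-2\cos t_{0}$ on $\partial U$ to obtain $\varepsilon>0$ so that for $|t-t_{0}|<\varepsilon$ exactly $m$ roots of $F(\lambda)=2\cos t$, counted with multiplicity, lie in $U$. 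Since for $|t-t_{0}|$ small the only branches $\lambda_{k}$ entering $U$ are those with $\lambda_{k}(t_{0})=\lambda_{0}$, and there are $m$ such indices (the eigenvalues being numbered counting multiplicity), these $m$ roots are precisely the values $\lambda_{k}(t)$, $k\in\mathbb{T}(\lambda_{0})$. Shrinking $\varepsilon$ so that $D(t_{0},\varepsilon)$ avoids $-t_{0}$ as well (automatic when $t_{0}=0,\pi$), for $t\in D(t_{0},\varepsilon)$ none of these roots equals $\lambda_{0}$, hence each lies in $U\setminus\{\lambda_{0}\}$ where $F^{\prime}\neq0$, so every $\lambda_{k}(t)$ is a simple eigenvalue. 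Finally, substituting $\lambda=\lambda_{k}(t)\to\lambda_{0}$ into $F(\lambda)-2\cos t_{0}=\tfrac{F^{(m)}(\lambda_{0})}{m!}(\lambda-\lambda_{0})^{m}(1+o(1))$ together with $F(\lambda_{k}(t))=2\cos t$ gives $|\lambda_{k}(t)-\lambda_{0}|^{m}\sim|2\cos t-2\cos t_{0}|$; since $2\cos t-2\cos t_{0}\sim(t-t_{0})$ when $\sin t_{0}\neq0$ (that is, $t_{0}\neq0,\pi$) and $2\cos t-2\cos t_{0}\sim(t-t_{0})^{2}$ when $\sin t_{0}=0$, $\cos t_{0}=\pm1$ (that is, $t_{0}=0,\pi$), the two asymptotics of $(a)$ follow.

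For $(b)$ I would first record two reductions. From formula $(29)$, $\alpha_{k}(-t)=\alpha_{k}(t)$, because its numerator $\varphi(1,\lambda_{k}(t))F^{\prime}(\lambda_{k}(t))$ depends on $t$ only through $\lambda_{k}(t)=\lambda_{k}(-t)$, while its denominator $\|\Phi_{t}(\cdot,\lambda_{k}(t))\|\,\|\Phi_{-t}(\cdot,\lambda_{k}(t))\|$ is unchanged under $t\mapsto-t$. Hence the behaviour of $\tfrac{1}{\alpha_{k}}$ near $-t_{0}$ mirrors that near $t_{0}$, and since $A_{k}$ is finite and symmetric (see $(15)$ and Lemma~1) the ESS condition of Definition~3 is equivalent to: some $k\in\mathbb{T}(\lambda_{0})$ has $\tfrac{1}{\alpha_{k}}$ not integrable on $D(t_{0},\varepsilon)\setminus A_{k}$ for every $\varepsilon$. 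By Lemma~1(d) this last function is continuous on $D(t_{0},\varepsilon)\setminus A_{k}$, hence measurable, and by hypothesis $\tfrac{1}{|\alpha_{k}(t)|}\sim|t-t_{0}|^{-\beta}$ there.

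Now the three cases. If $\beta=0$ then $|\alpha_{k}|\geq c_{1}>0$ on $D(t_{0},\delta)$ for all $k\in\mathbb{T}(\lambda_{0})$; were $\lambda_{0}$ a spectral singularity, Proposition~1(a) would give $n$ and $\{t_{j}\}\subset(-\pi,\pi]\setminus\overline{A}$ with $\lambda_{n}(t_{j})\to\lambda_{0}$ and $\alpha_{n}(t_{j})\to0$, so passing to a subsequence $t_{j}\to s$, continuity of $\lambda_{n}$ gives $\lambda_{0}\in\sigma(L_{s})$, forcing $s=\pm t_{0}$ and hence $n\in\mathbb{T}(\lambda_{0})$; then $|\alpha_{n}(t_{j})|=|\alpha_{n}(\pm t_{j})|\geq c_{1}$ for large $j$, a contradiction, so $\lambda_{0}\notin\mathbb{S}$. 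If $0<\beta<1$, choose $t_{j}\to t_{0}$ with $t_{j}\in D(t_{0},\delta)\setminus\overline{A}$ (possible since $\overline{A}$ is at most countable there): then $\lambda_{k}(t_{j})\to\lambda_{0}$ and $\alpha_{k}(t_{j})\to0$, so $\lambda_{0}\in\mathbb{S}$ by Proposition~1(a); but $\int_{D(t_{0},\varepsilon)}|t-t_{0}|^{-\beta}\,dt<\infty$, so $\tfrac{1}{\alpha_{k}}$ is integrable on $D(t_{0},\varepsilon)\setminus A_{k}$ for every $k\in\mathbb{T}(\lambda_{0})$, whence $\lambda_{0}\notin\mathbb{E}$. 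If $\beta\geq1$, then $\int_{D(t_{0},\varepsilon)}|t-t_{0}|^{-\beta}\,dt=\infty$ for every $\varepsilon$, so $\tfrac{1}{\alpha_{k}}$ fails to be integrable on $D(t_{0},\varepsilon)\setminus A_{k}$, and $\lambda_{0}\in\mathbb{E}$ (and then $\lambda_{0}\in\mathbb{S}$ by Proposition~2).

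The main obstacle is part~$(a)$: pinning down that near $(\lambda_{0},t_{0})$ the solution branches of $F=2\cos t$ are exactly the $m$ curves $\lambda_{k}$, $k\in\mathbb{T}(\lambda_{0})$, and extracting the Puiseux-type estimate uniformly in the branch, thereby separating the exponents $\tfrac{1}{m}$ and $\tfrac{2}{m}$ according to whether $\sin t_{0}$ vanishes. Once $(a)$ is in place, $(b)$ is bookkeeping with Proposition~1, Lemma~1 and Definition~3; the only delicate points there are the $t\leftrightarrow-t$ symmetry of $\alpha_{k}$ and the observation that any sequence realizing the spectral-singularity criterion of Proposition~1(a) is necessarily indexed within $\mathbb{T}(\lambda_{0})$.
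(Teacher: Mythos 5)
Your proposal is correct and follows essentially the same route as the paper: part $(a)$ rests on the identical Taylor comparison $F(\lambda)-2\cos t_{0}\sim(\lambda-\lambda_{0})^{m}$ versus $2\cos t-2\cos t_{0}\sim(t-t_{0})$ or $(t-t_{0})^{2}$ combined with $F(\lambda_{k}(t))=2\cos t$ and continuity of $\lambda_{k}$, while part $(b)$ is the same bookkeeping with Definitions 1 and 3 (via Proposition 1) that the paper compresses into one line. The extras you add --- the Rouch\'{e} localization of the $m$ branches, the explicit simplicity argument via $F^{\prime}\neq0$ on a punctured disc, and the $t\mapsto-t$ symmetry of $\alpha_{k}$ needed to handle the neighborhood of $-t_{0}$ in Definition 3 --- are sound refinements of steps the paper leaves implicit, not a different method.
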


\begin{proof}
Using the Taylor formula for $F(\lambda)$ and $\cos t$ and taking into account that

$F(\lambda_{0})=2\cos t_{0},$ $F^{(k)}(\lambda_{0})=0$ for $k=1,2,....(m-1)$
and $F^{^{(m)}}(\lambda_{0})\neq0$ we obtain
\[
F(\lambda)=2\cos t_{0}+F^{^{(m)}}(\lambda_{0})(\lambda-\lambda_{0}%
)^{m}(1+o(1))
\]
as $\lambda\rightarrow\lambda_{0}$ and
\[
2\cos t=2\cos t_{0}-\left(  \sin t_{0}\right)  (t-t_{0})-(t-t_{0})^{2}\left(
\tfrac{1}{2}+o(1)\right)
\]
as $t\rightarrow t_{0}.$ These equalities with the equality $F(\lambda
_{k}(t))=2\cos t$ and the continuity of $\lambda_{k}$ give the proof of $(a).$
The proof of $(b)$ follows from the definitions 1 and 3.
\end{proof}

Now we are ready to prove the main results. First, let us consider the case
$t_{0}\neq0,\pi.$

\begin{theorem}
If $t_{0}\in(0,\pi)$ and $\lambda_{0}$ is a multiple eigenvalue of $L_{t_{0}%
},$ then $\lambda_{0}$ is a spectral singularity of $L$ and is not an ESS.
\end{theorem}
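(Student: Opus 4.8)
The plan is to analyze the behavior of $\alpha_n(t)$ near $t_0$ via the formula (30), $\alpha_n(t)=-\dfrac{\varphi(1,\lambda_n(t))F'(\lambda_n(t))}{\|\Phi_t(\cdot,\lambda_n(t))\|\,\|\Phi_{-t}(\cdot,\lambda_n(t))\|}$, and to show that the exponent $\beta$ in Proposition 4(b) satisfies $0<\beta<1$, so that $\lambda_0$ is a spectral singularity but not an ESS. First I would fix $k\in\mathbb{T}(\lambda_0)$ and use Proposition 4(a): since $t_0\neq 0,\pi$, we have $\lambda_k(t)-\lambda_0\sim(t-t_0)^{1/m}$ as $t\to t_0$. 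I then need to track how each factor in (30) behaves. Differentiating $F(\lambda_k(t))=2\cos t$ and using $F'(\lambda_0)=\cdots=F^{(m-1)}(\lambda_0)=0$, $F^{(m)}(\lambda_0)\neq 0$ from the Taylor expansion already derived in the proof of Proposition 4, one gets $F'(\lambda_k(t))\sim(\lambda_k(t)-\lambda_0)^{m-1}\sim(t-t_0)^{(m-1)/m}$.

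Next I would estimate the numerator factor $\varphi(1,\lambda_k(t))$ and the denominator norms. The key observation is that at $t=t_0$ the operator $L_{t_0}$ still has only one (geometrically) independent eigenfunction per eigenvalue when $t_0\neq 0,\pi$ — this is exactly the fact proved at the start of the proof of Lemma 1(a), namely that (51) would contradict (4). Hence $\Phi_{t_0}(\cdot,\lambda_0)$ and $G_{t_0}(\cdot,\lambda_0)$ cannot both vanish, and by continuity (Lemma 1) at least one of $\|\Phi_t(\cdot,\lambda_k(t))\|$, $\|G_t(\cdot,\lambda_k(t))\|$ stays bounded away from $0$ near $t_0$; likewise for the $-t$ versions. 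Using the relation (54) between the $\Phi$- and $G$-normalizations (with $|c(t)|=1$), I can always work with whichever of (30) or (34) has nonvanishing denominator, so the denominator in the relevant formula is $\sim 1$ (bounded above and below) as $t\to t_0$. That reduces the whole problem to the behavior of the numerator.

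For the numerator I distinguish cases according to whether $\varphi(1,\lambda_0)=0$ or not (equivalently whether $\lambda_0$ is a Dirichlet eigenvalue). If $\varphi(1,\lambda_0)\neq 0$, then $\varphi(1,\lambda_k(t))\sim 1$ and the whole of $\alpha_k(t)$ is $\sim F'(\lambda_k(t))\sim(t-t_0)^{(m-1)/m}$, so $\beta=(m-1)/m\in(0,1)$ since $m>1$. If $\varphi(1,\lambda_0)=0$, I expand $\varphi(1,\lambda_k(t))\sim(\lambda_k(t)-\lambda_0)^{r}$ for the order $r\ge 1$ of vanishing of $\lambda\mapsto\varphi(1,\lambda)$ at $\lambda_0$, giving $\varphi(1,\lambda_k(t))\sim(t-t_0)^{r/m}$ and hence $\beta=(m-1+r)/m$. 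I must then check this is $<1$, i.e. $r<1$ — which would be false for $r\ge1$. The resolution (and this is the main obstacle) is that when $\varphi(1,\lambda_0)=0$ the denominator norms also degenerate: from (21), $\|\Phi_t(\cdot,\lambda_k(t))\|^2$ and the $\|G\|$'s involve factors of $\varphi(1,\lambda_k(t))$ or $e^{it}-\theta$, and one shows $\|\Phi_t\|\,\|\Phi_{-t}\|\sim(t-t_0)^{r/m}$ as well, so these cancel the numerator's extra vanishing and one is left again with $\beta=(m-1)/m$. Carrying out this cancellation carefully — showing the exponent of vanishing of $\|\Phi_t(\cdot,\lambda_k(t))\|\|\Phi_{-t}(\cdot,\lambda_k(t))\|$ exactly matches that of $\varphi(1,\lambda_k(t))$, using the structure (21)–(22), (34)–(35) and the fact that (51) is excluded for $t_0\neq0,\pi$ — is where the real work lies.

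Once $\beta=(m-1)/m$ is established for every $k\in\mathbb{T}(\lambda_0)$, and since $m>1$ gives $0<\beta<1$, Proposition 4(b) immediately yields that $\lambda_0$ is a spectral singularity of $L$ and is not an ESS, completing the proof.
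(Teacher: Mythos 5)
Your overall strategy is the same as the paper's: reduce everything to showing $\alpha_{k}(t)\sim(t-t_{0})^{\frac{m-1}{m}}$ for all $k\in\mathbb{T}(\lambda_{0})$ and then invoke Proposition 3(b), and your treatment of $F^{\prime}(\lambda_{k}(t))\sim(t-t_{0})^{\frac{m-1}{m}}$ and of the case $\varphi(1,\lambda_{0})\neq0$ is correct. However, in the case $\varphi(1,\lambda_{0})=0$ there is a genuine gap: you assert that $\left\Vert \Phi_{t}(\cdot,\lambda_{k}(t))\right\Vert \left\Vert \Phi_{-t}(\cdot,\lambda_{k}(t))\right\Vert $ vanishes to exactly the same order as $\varphi(1,\lambda_{k}(t))$ and you explicitly defer the verification (``where the real work lies''). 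That verification is precisely the content of the theorem in this case, and it is where the hypothesis $t_{0}\neq0,\pi$ must enter quantitatively. The paper closes it as follows: writing $\Phi_{\pm t}(x,\lambda)=\varphi\theta(x,\lambda)+\tfrac{1}{2}(\varphi^{\prime}-\theta\pm ip(\lambda))\varphi(x,\lambda)$ and using the identity $(\varphi^{\prime}-\theta+ip)(\varphi^{\prime}-\theta-ip)=4-4\varphi^{\prime}\theta=-4\varphi\theta^{\prime}$ together with $p(\lambda_{0})\neq0$ (true only because $t_{0}\neq0,\pi$), exactly one of the coefficients $\varphi^{\prime}-\theta\pm ip$ is nonzero at $\lambda_{0}$ while the other is $O((\lambda-\lambda_{0})^{s})$ when $\varphi(\lambda)\sim(\lambda-\lambda_{0})^{s}$; combined with the $L_{2}$-linear independence of $\theta(\cdot,\lambda)$ and $\varphi(\cdot,\lambda)$ this yields one norm $\sim(\lambda-\lambda_{0})^{s}$ and the other $\sim1$, so the factor $(\lambda-\lambda_{0})^{s}$ cancels the numerator and (65) follows. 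Note that for $t_{0}=0,\pi$ one has $p(\lambda_{0})=0$ and the cancellation fails (consistently with Theorem 2), so any complete argument must use $p(\lambda_{0})\neq0$ or an equivalent fact; your sketch never does.

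A secondary flaw is your intermediate claim that you can always work with whichever of the two formulas for $\alpha_{n}$ (the paper's (29) and (32)) has denominator bounded away from zero, because $\Phi_{t_{0}}(\cdot,\lambda_{0})$ and $G_{t_{0}}(\cdot,\lambda_{0})$ cannot both vanish. Each denominator is a product over both quasimomenta $\pm t_{0}$, so this does not follow. For example, if $\varphi(1,\lambda_{0})=\theta^{\prime}(1,\lambda_{0})=0$ (simultaneously a Dirichlet and a Neumann eigenvalue, not excluded for complex $q$) with $\theta(1,\lambda_{0})=e^{it_{0}}$ and $\varphi^{\prime}(1,\lambda_{0})=e^{-it_{0}}$, then $\Phi_{t_{0}}(\cdot,\lambda_{0})\equiv0$ and $G_{-t_{0}}(\cdot,\lambda_{0})\equiv0$, so both products $\left\Vert \Phi_{t}\right\Vert \left\Vert \Phi_{-t}\right\Vert $ and $\left\Vert G_{t}\right\Vert \left\Vert G_{-t}\right\Vert $ degenerate even though no pair $\Phi_{t_{0}},G_{t_{0}}$ vanishes together. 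Hence the switch-of-formula device cannot substitute for the order-matching cancellation, which is exactly the step your proposal leaves unproved.
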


\begin{proof}
Let $\lambda_{0}$ be a multiple eigenvalue of multiplicity $m>1.$ Then
$F^{^{\prime}}(\lambda)\sim(\lambda_{0}-\lambda)^{m-1}$ as $\lambda
\rightarrow\lambda_{0}$. Hence, using Proposition 3(a) and taking into account
that $\lambda_{k}$ for $k\in\mathbb{T}(\lambda_{0})$ is continuous at $t_{0},$
that is, for each neighborhood $U$ of $\lambda_{0}$ there exist a neighborhood
$\delta\subset(-\pi,0)\cup(0,\pi)$ of $t_{0}$ such that $\lambda_{k}%
(\delta)\subset U$ we obtain
\begin{equation}
F^{^{\prime}}(\lambda_{k}(t))\sim(t_{0}-t)^{\frac{m-1}{m}},\text{ }\forall
k\in\mathbb{T}(\lambda_{0})
\end{equation}
as $t\rightarrow t_{0}.$ Now to prove the theorem we use Proposition 3(b) and
the formula
\begin{equation}
\alpha_{k}(t)=\frac{-\varphi F^{^{\prime}}(\lambda)}{\left\Vert \varphi
\theta(\cdot,\lambda)+\tfrac{1}{2}(\varphi^{\prime}-\theta-ip(\lambda
))\varphi(\cdot,\lambda)\right\Vert \left\Vert \varphi\theta(\cdot
,\lambda)+\tfrac{1}{2}(\varphi^{\prime}-\theta+ip(\lambda))\varphi
(\cdot,\lambda)\right\Vert }%
\end{equation}
for $\lambda=\lambda_{k}(t)$ obtained from (29), (36) and (34). Consider two cases:

Case 1: $\varphi(\lambda_{0})\neq0.$ Then $\varphi(\lambda)\sim1$ as
$\lambda\rightarrow\lambda_{0},$ since $\varphi$ is an entire functions. On
the other hand $\varphi^{\prime}-\theta\pm ip=O(1)$ as $\lambda\rightarrow
\lambda_{0}.$ Using this and taking into account that $\theta(\cdot,\lambda)$
and $\varphi(\cdot,\lambda)$ are linearly independent elements of $L_{2}(0,1)$
(see the proof of Lemma 1(c)) we obtain%
\begin{equation}
\left\Vert \varphi\theta(x,\lambda)+\tfrac{1}{2}(\varphi^{\prime}-\theta\pm
ip(\lambda))\varphi(x,\lambda)\right\Vert \sim1
\end{equation}
as $\lambda\rightarrow\lambda_{0}.$ Therefore using (62) in (63) we obtain
\begin{equation}
\alpha_{k}(t)\sim(t_{0}-t)^{\frac{m-1}{m}}%
\end{equation}
as $t\rightarrow t_{0}$ for all $k\in\mathbb{T}(\lambda_{0}).$ Thus, \ by
Proposition 3(b), $\lambda_{0}$ is a spectral singularity of $L$ and is not an ESS.

Case 2: $\varphi(\lambda_{0})=0.$ Then there exists a positive integer $s$
such that
\begin{equation}
\varphi(\lambda)\sim(\lambda-\lambda_{0})^{s}\text{ }%
\end{equation}
as $\lambda\rightarrow\lambda_{0}.$ On the other hand, by (35) and (30) we
have
\begin{equation}
\left(  \varphi^{\prime}(\lambda)-\theta(\lambda)+ip(\lambda)\right)  \left(
\varphi^{\prime}(\lambda)-\theta(\lambda)-ip(\lambda)\right)  =\left(
\varphi^{\prime}(\lambda)-\theta(\lambda)\right)  ^{2}+
\end{equation}%
\[
4-\left(  \varphi^{\prime}(\lambda)+\theta(\lambda)\right)  ^{2}%
=4-4\varphi^{\prime}(\lambda)\theta(\lambda)=-4\varphi(\lambda)\theta
^{^{\prime}}(\lambda).
\]
Since $p(\lambda_{0})=\sin t_{0}\neq0,$ at least one the numbers
$\varphi^{\prime}(\lambda_{0})-\theta(\lambda_{0})+ip(\lambda_{0})$ \ and

$\varphi^{\prime}(\lambda_{0})-\theta(\lambda_{0})-ip(\lambda_{0})$ is not
zero. Suppose, without loss of generality, the first of them is not zero. Then
using (67) and (66) and arguing as in the proof of (64) we get
\begin{align*}
\varphi^{\prime}(\lambda)-\theta(\lambda)-ip(\lambda)  &  =O((\lambda
-\lambda_{0})^{s}),\text{ }\\
\left\Vert \varphi\theta(x,\lambda)+\tfrac{1}{2}(\varphi^{\prime}%
-\theta-ip(\lambda))\varphi(x,\lambda)\right\Vert  &  \sim(\lambda-\lambda
_{0})^{s},\\
\left\Vert \varphi\theta(x,\lambda)+\tfrac{1}{2}(\varphi^{\prime}%
-\theta+ip(\lambda))\varphi(x,\lambda)\right\Vert  &  \sim1
\end{align*}
as $\lambda\rightarrow$ $\lambda_{0}.$ Now, from (63), (62) and (66) we obtain
(65). Therefore the proof of the theorem follows from Proposition 3(b)
\end{proof}

By the similar arguments one can find conditions on $\lambda_{n}(t)$ for
$t=0,\pi$ to be or not to be the ESS. Here we prove only one criterion for
large value of $n$ which will be used essentially for the spectral expansion.
For this we use the following well-known statements (see [6]). The large
eigenvalues of the Dirichlet and Neimann boundary value problems are simple,
that is, the multiplicities of the large roots $\ \lambda_{0}$ and $\mu_{0}$
of $\varphi(\lambda)=0$ and $\theta(\lambda)=0$ is $1.$ It mean that%
\begin{equation}
\varphi(\lambda)\sim(\lambda-\lambda_{0})\text{ }\And\text{\ }\theta
(\lambda)\sim(\lambda-\mu_{0})
\end{equation}
as $\lambda\rightarrow\lambda_{0}$ and $\lambda\rightarrow\mu_{0}$ respectively.

Similarly, if $\left\vert k\right\vert \gg1$ and $\lambda_{0}=\lambda_{k}(0)$
is the multiple eigenvalue of $L_{0}$ then it is double eigenvalue and hence
$F(\lambda_{0})=2,$ $F^{\prime}(\lambda_{0})=0,$ $F^{\prime\prime}(\lambda
_{0})\neq0$ which implies that
\begin{equation}
F^{\prime}(\lambda)\sim(\lambda-\lambda_{0})\text{ }\And\text{\ }%
p(\lambda)\sim(\lambda-\lambda_{0})
\end{equation}
as $\lambda\rightarrow\lambda_{0}.$ Therefore by (61) we have
\begin{equation}
\lambda_{k}(t)-\lambda_{0}\sim t,\text{ }F^{\prime}(\lambda_{k}(t))\sim
t,\text{ }\forall k\in\mathbb{T}(\lambda_{0})
\end{equation}
as $t\rightarrow0.$ Now using (70) we prove the following main result of this section

\begin{theorem}
Let $\lambda_{0}$ be a large and multiple eigenvalue of $L_{0}.$ Then the
following statements are equivalent:

$(a)$ The eigenvalue $\lambda_{0}$ of $L_{0}$ is an ESS of $L.$

$(b)$ The eigenvalue $\lambda_{0}$ is a spectral singularity of $L.$

$(c)$ The geometric multiplicity of the eigenvalue $\lambda_{0}$ is $1,$ that
is, there exist one eigenfunction and one associated function corresponding to
$\lambda_{0}$.

$(d)$ $\lambda_{0}$ is neither Dirichlet nor Naimann eigenvalue, that is,
\begin{equation}
\varphi(\lambda_{0})\neq0\text{ }\And\text{\ }\theta^{^{\prime}}(\lambda
_{0})\neq0.
\end{equation}

The theorem continues to hold if $L_{0}$ is replaced by $L_{\pi}.$
\end{theorem}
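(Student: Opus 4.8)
The plan is to reduce all four assertions to a statement about the vanishing order of the functions $\alpha_{k}(t)$, $k\in\mathbb{T}(\lambda_{0})$, as $t\to0$, and then to close the circle $(d)\Rightarrow(a)\Rightarrow(b)\Rightarrow(c)\Rightarrow(d)$ via Proposition 3$(b)$. Throughout we use that a large multiple eigenvalue of $L_{0}$ is double, so $F(\lambda_{0})=2$, $F^{\prime}(\lambda_{0})=0$, $F^{\prime\prime}(\lambda_{0})\neq0$, hence $p(\lambda_{0})=\sqrt{4-F^{2}(\lambda_{0})}=0$ (and $p$ is analytic near $\lambda_{0}$), and that $(69)$–$(70)$ give $\lambda_{k}(t)-\lambda_{0}\sim t$, $F^{\prime}(\lambda_{k}(t))\sim t$ and $p(\lambda_{k}(t))\sim t$ as $t\to0$. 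The computational tool is formula $(64)$, analyzed by case distinction, using the uniform linear independence of $\theta(\cdot,\lambda)$ and $\varphi(\cdot,\lambda)$ near $\lambda_{0}$ (as in the proof of Lemma 1$(c)$) and the identity $(67)$, which at $\lambda_{0}$ reduces to $(\varphi^{\prime}(\lambda_{0})-\theta(\lambda_{0}))^{2}=-4\varphi(\lambda_{0})\theta^{\prime}(\lambda_{0})$ because $p(\lambda_{0})=0$.

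\emph{Step 1 ($(c)\Leftrightarrow(d)$, the delicate point).} The monodromy matrix at $\lambda_{0}$ has determinant $1$ (by $(31)$) and trace $F(\lambda_{0})=2$, hence characteristic polynomial $(x-1)^{2}$; so the geometric multiplicity of $\lambda_{0}$ is $1$ unless this matrix equals $I$, which — using $(31)$ and $F(\lambda_{0})=2$ — occurs exactly when $\varphi(\lambda_{0})=\theta^{\prime}(\lambda_{0})=0$. Thus $(c)$ says $(\varphi(\lambda_{0}),\theta^{\prime}(\lambda_{0}))\neq(0,0)$ while $(d)$ says both are nonzero, and only $(c)\Rightarrow(d)$ requires proof. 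Suppose exactly one vanishes, say $\varphi(\lambda_{0})=0$ and $\theta^{\prime}(\lambda_{0})\neq0$ (the other case is symmetric). Then $(67)$ gives $\varphi^{\prime}(\lambda_{0})=\theta(\lambda_{0})$, so the middle integrand of $(32)$ vanishes at $\lambda_{0}$ and we obtain $0=F^{\prime}(\lambda_{0})=\theta^{\prime}(\lambda_{0})\int_{0}^{1}\varphi^{2}(x,\lambda_{0})\,dx$, forcing $\int_{0}^{1}\varphi^{2}(x,\lambda_{0})\,dx=0$. But $\lambda_{0}$ is a large zero of $\varphi(1,\cdot)$, hence a large, simple Dirichlet eigenvalue by $(68)$; the resulting nonvanishing of $\tfrac{\partial}{\partial\lambda}\varphi(1,\lambda_{0})$ (equivalently the large-$\lambda_{0}$ asymptotics $\int_{0}^{1}\varphi^{2}(x,\lambda_{0})\,dx\sim(2\lambda_{0})^{-1}$) contradicts this. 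Hence $(d)$ holds. Excluding this ``mixed'' configuration — via $(67)$ at the double zero $p(\lambda_{0})=0$, the integral representation $(32)$ of $F^{\prime}$, and the largeness of $\lambda_{0}$ — is the main obstacle; the rest is careful but routine bookkeeping of orders in $(64)$.

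\emph{Step 2 ($(d)\Rightarrow(a)\Rightarrow(b)$).} Assume $(d)$. Then $(67)$ gives $(\varphi^{\prime}(\lambda_{0})-\theta(\lambda_{0}))^{2}=-4\varphi(\lambda_{0})\theta^{\prime}(\lambda_{0})\neq0$, so $\varphi^{\prime}(\lambda)-\theta(\lambda)\pm ip(\lambda)\sim1$ near $\lambda_{0}$ (since $p(\lambda)\to0$), and likewise $\varphi(\lambda)\sim1$. The two coefficients of the independent vectors $\theta(\cdot,\lambda)$, $\varphi(\cdot,\lambda)$ in $(36)$ being $\sim1$, uniform linear independence yields $\|\Phi_{\pm}(\cdot,\lambda)\|\sim1$ near $\lambda_{0}$; substituting in $(64)$ together with $F^{\prime}(\lambda_{k}(t))\sim t$ gives $\alpha_{k}(t)\sim t$ for every $k\in\mathbb{T}(\lambda_{0})$. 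By Proposition 3$(b)$ with $\beta=1$, $\lambda_{0}$ is an ESS, i.e. $(a)$; and $(a)\Rightarrow(b)$ is the inclusion $\mathbb{E}\subset\mathbb{S}$ of Proposition 2.

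\emph{Step 3 ($(b)\Rightarrow(c)$ and the $L_{\pi}$ case).} We prove the contrapositive. If $(c)$ fails then, by Step 1, $\varphi(\lambda_{0})=\theta^{\prime}(\lambda_{0})=0$ and $\varphi^{\prime}(\lambda_{0})=\theta(\lambda_{0})$. Being a large simple Dirichlet eigenvalue, $\lambda_{0}$ is a simple zero of $\varphi(1,\cdot)$, so $\varphi(\lambda_{k}(t))\sim t$; also $\varphi^{\prime}(\lambda)-\theta(\lambda)$ vanishes at $\lambda_{0}$ and $p(\lambda)=O(t)$, so in $(36)$ the $\varphi(\cdot,\lambda)$-coefficient is $O(t)$ while the $\theta(\cdot,\lambda)$-coefficient $\varphi(\lambda)\sim t$, whence by uniform linear independence $\|\Phi_{\pm}(\cdot,\lambda)\|\sim t$. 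Then $(64)$ and $F^{\prime}(\lambda_{k}(t))\sim t$ give $\alpha_{k}(t)\sim1$ for all $k\in\mathbb{T}(\lambda_{0})$, i.e. $\beta=0$, so by Proposition 3$(b)$ $\lambda_{0}$ is not a spectral singularity; this is $\neg(b)$ and closes the circle. For $L_{\pi}$ the argument is identical, using the analogues of $(69)$–$(70)$ at $t_{0}=\pi$, with $F(\lambda_{0})=-2$ (so again $p(\lambda_{0})=0$ and $(67)$ holds at $\lambda_{0}$) and the monodromy matrix equal to $-I$ in the geometric-multiplicity-two case.
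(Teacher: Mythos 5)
Your argument is correct, but it is organized along a genuinely different route than the paper's. The paper proves $(a)\Leftrightarrow(b)\Leftrightarrow(d)$ by running the order computation for $\alpha_{k}(t)$ twice, once with the $\varphi$-representation (29)/(63) and once with the $\theta^{\prime}$-representation (32); the ``mixed'' configuration $\varphi(\lambda_{0})=0\neq\theta^{\prime}(\lambda_{0})$ is thereby excluded only implicitly (it would make $\lambda_{0}$ simultaneously a spectral singularity and not one), and the paper then gets $(d)\Rightarrow(c)$ from the fact that geometric multiplicity two forces $\varphi(\lambda_{0})=0$, and $(c)\Rightarrow(b)$ by an associated-function argument ($\alpha_{n}(0)=0$ together with continuity from Lemma 1(b) and Proposition 1(a)). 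You instead close the cycle $(d)\Rightarrow(a)\Rightarrow(b)\Rightarrow(c)\Rightarrow(d)$: your Steps 2 and 3 are the same order bookkeeping as the paper's two cases (using (63) and (68)--(70)), but you never need the second $\alpha$-formula (32) nor the associated-function/continuity machinery, because you prove $(c)\Leftrightarrow(d)$ directly from the monodromy matrix together with an explicit exclusion of the mixed case via the identity (67) at $p(\lambda_{0})=0$, the integral formula (31) for $F^{\prime}$ (which yields $\int_{0}^{1}\varphi^{2}(x,\lambda_{0})\,dx=0$), and the simplicity of large Dirichlet eigenvalues. That last step is the one place you are too terse: passing from $\int_{0}^{1}\varphi^{2}(x,\lambda_{0})\,dx=0$ to a contradiction requires either the classical identity $\tfrac{\partial}{\partial\lambda}\varphi(1,\lambda_{0})=\theta(1,\lambda_{0})\int_{0}^{1}\varphi^{2}(x,\lambda_{0})\,dx$ valid at a Dirichlet eigenvalue (here $\theta(1,\lambda_{0})=1$), or the large-eigenvalue asymptotics $\int_{0}^{1}\varphi^{2}(x,\lambda_{0})\,dx\sim(2\lambda_{0})^{-1}$; writing ``equivalently'' without stating either glosses over the only nontrivial input not already set up in the paper, so it should be spelled out. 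Minor bookkeeping: several of your equation references are shifted, since the $\alpha_{k}$ formula you use is (63), not (64); the Wronskian is (30), not (31); the integral formula for $F^{\prime}$ is (31), not (32); and $\Phi_{\pm}$ is defined in (34), not (36). What each approach buys: yours is a self-contained cycle that avoids the $G_{t}$-representation and Lemma 1(b)/Proposition 1(a) at the cost of one extra classical Dirichlet fact; the paper's stays entirely within its own estimates and gets the exclusion of the mixed case for free from the symmetry between $\varphi$ and $\theta^{\prime}$.
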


\begin{proof}
We prove the theorem for $L_{0}.$ The proof of the case $L_{\pi}$ is the same.
First let us prove that $(a)$ and $(b)$ hold if and only if $\varphi
(\lambda_{0})\neq0.$ If the last inequality holds then (64) holds too.
Therefore using (64) and (70) in (63) we obtain that
\begin{equation}
\alpha_{k}(t)\sim t
\end{equation}
as $t\rightarrow0$ for all $k\in\mathbb{T}(\lambda_{0})$ and hence, by
propositions 3(b) and 2, $(a)$ and $(b)$ hold.

Now suppose that $\varphi(\lambda_{0})=0$. Then using (30) and the equality
$F(\lambda_{0})=2$ by direct calculation we obtain $\theta(\lambda
_{0})=1=\varphi^{^{\prime}}(\lambda_{0})=1.$ Therefore, the first relation of
(68) and the second relation of (69) imply that
\begin{equation}
\left\Vert \varphi\theta(x,\lambda)+\tfrac{1}{2}(\varphi^{\prime}-\theta\pm
ip(\lambda))\varphi(x,\lambda)\right\Vert \sim(\lambda-\lambda_{0})
\end{equation}
and by (63), (68)-(70) we have $\alpha_{k}(t)\sim1$ as $t\rightarrow0$ for all
$k\in\mathbb{T}(\lambda_{0}),$ that is, $(a)$ and $(b)$ does not hold.

Instead of (29) using (32) in the same way we prove that $(a)$ and $(b)$ hold
if and only if $\theta^{^{\prime}}(\lambda_{0})\neq0.$ Thus we proved that
$(a)$, $(b)$ and $(d)$ are equivalent.

To complete the proof of the theorem we prove that $(d)\Longrightarrow(c)$ and
$(c)\Longrightarrow(b).$ Suppose that $(d)$ and hence (71) holds. If $(c)$
does not hold then both solution $\varphi(x,\lambda_{0})$ and $\theta
(x,\lambda_{0})$ are periodic function. In this case by (5) we have
$\varphi(\lambda)=0$ which contradicts (71). Thus $(d)\Longrightarrow(c)$.

If $(c)$ holds, then, there is one eigenfnction $\Psi_{n,0}$ corresponding to
the eigenvalue $\lambda_{n}(0)=\lambda_{0}$ and an associated function $\phi,$
satisfying
\begin{equation}
(L_{0}-\lambda_{n}(0))\phi=\Psi_{n,0}.
\end{equation}
Multiplying both sides of (74) by $\Psi_{n,0}^{\ast}$ we obtain $\alpha
_{n}(0)=0.$ On the other hand, if the geometric multiplicity of $\lambda
_{n}(0)$ is $1$ then by Lemma 1(b) $\alpha_{n}$ is continuous at $0.$ Hence
$\alpha_{n}(t)\rightarrow0$ as $t\rightarrow0$ and by Proposition 1(a)
$\lambda_{0}$ is a spectral singularity, that is, $(b)$ holds.
\end{proof}

If the geometric multiplicity of $\lambda_{n}(0)$ is $1,$ then at least one of
$\varphi(\lambda_{n}(0))$ and $\theta^{^{\prime}}(\lambda_{n}(0))$ is not
zero. Indeed if both are zero, then it follows from (30), (4) and (5) that
both $\varphi(x,\lambda_{n}(0))$ and $\theta(x,\lambda_{n}(0))$ are
eigenfunctions which contradicts the assumption. Without loss of generality,
assume that $\varphi(\lambda_{n}(0))$ is not zero. Then (64) holds. Therefore
from (63) and (61) we obtain that
\begin{equation}
\alpha_{n}(t)\sim t^{\frac{2(m-1)}{m}}%
\end{equation}
which implies the following

\begin{proposition}
If $\lambda_{n}(0),$ where $n\in\mathbb{Z},$ is a multiple eigenvalue with
geometric multiplicity $1,$ then it is an ESS. The statement continuous to
hold if $\lambda_{n}(0)$ is replaced by $\lambda_{n}(\pi).$
\end{proposition}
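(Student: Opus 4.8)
The plan is to reduce Proposition~5 to the tools already assembled, namely Theorem~2, Proposition~3(b), and the formula (63) for $\alpha_{n}(t)$. First I would recall that a multiple eigenvalue $\lambda_{n}(0)$ of $L_{0}$ with geometric multiplicity $1$ is necessarily algebraically multiple of some multiplicity $m>1$, so that $F(\lambda_{n}(0))=2$, $F^{(j)}(\lambda_{n}(0))=0$ for $1\le j\le m-1$, and $F^{(m)}(\lambda_{n}(0))\ne 0$; this gives, via (4) and the continuity of $\lambda_{n}$ as in the proof of Proposition~3(a), the asymptotics $\lambda_{n}(t)-\lambda_{0}\sim t^{2/m}$ and hence $F'(\lambda_{n}(t))\sim (\lambda_{n}(t)-\lambda_{0})^{m-1}\sim t^{2(m-1)/m}$ as $t\to 0$, for every $k\in\mathbb{T}(\lambda_{0})$. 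This is exactly relation (61) specialized to $t_{0}=0$, already invoked in the displayed equation just before the statement.

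Next I would observe that geometric multiplicity $1$ forces at least one of $\varphi(\lambda_{n}(0))$ and $\theta'(\lambda_{n}(0))$ to be nonzero: if both vanished, then (5) together with (30) and (4) would make both $\varphi(x,\lambda_{n}(0))$ and $\theta(x,\lambda_{n}(0))$ into honest eigenfunctions (periodic solutions), contradicting geometric multiplicity $1$. Without loss of generality assume $\varphi(\lambda_{n}(0))\ne 0$. Then $\varphi(\lambda)\sim 1$ as $\lambda\to\lambda_{0}$, and since $\theta(\cdot,\lambda)$ and $\varphi(\cdot,\lambda)$ are linearly independent in $L_{2}(0,1)$ and $\varphi'-\theta\pm ip=O(1)$, the norms in the denominator of (63) satisfy (64), i.e.\ each of the two factors is $\sim 1$ as $\lambda\to\lambda_{0}$. (If instead only $\theta'(\lambda_{n}(0))\ne 0$, one repeats the argument using the $G_{t}$-version (32) of $\alpha_{n}$ in place of (29)/(36)/(34).) Plugging these facts into (63) with $\lambda=\lambda_{n}(t)$ gives
\begin{equation}
\alpha_{n}(t)\sim F'(\lambda_{n}(t))\sim t^{\frac{2(m-1)}{m}}
\end{equation}
as $t\to 0$, which is precisely the displayed formula (76) preceding the proposition.

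Finally I would conclude by applying Proposition~3(b) with $t_{0}=0$ and exponent $\beta=\frac{2(m-1)}{m}$. Since $m>1$ we have $\beta=2-\frac{2}{m}\ge 1$ (indeed $\beta\ge 1$ exactly when $m\ge 2$), so Proposition~3(b) yields that $\lambda_{n}(0)$ is an ESS of $L$. The case $\lambda_{n}(\pi)$ is identical with $L_{0}$ replaced by $L_{\pi}$ and $t_{0}=0$ replaced by $t_{0}=\pi$, using $F(\lambda_{n}(\pi))=-2$ and $p(\lambda)=\sqrt{4-F^{2}(\lambda)}$ vanishing to the same order. The only place requiring genuine care — the ``main obstacle'' — is establishing that one of the two denominator norms in (63) stays bounded below (i.e.\ $\sim 1$) rather than degenerating; this rests on the linear independence of $\theta(\cdot,\lambda)$ and $\varphi(\cdot,\lambda)$ in $L_{2}(0,1)$ together with the nonvanishing of $\varphi(\lambda_{0})$ (or $\theta'(\lambda_{0})$), and is the same estimate carried out in the proof of Lemma~1(c) and of Theorem~2, Case~1, so it can be quoted rather than redone.
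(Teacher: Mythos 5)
Your proposal is correct and follows essentially the same route as the paper: geometric multiplicity $1$ forces $\varphi(\lambda_{n}(0))\neq0$ or $\theta^{\prime}(\lambda_{n}(0))\neq0$ (otherwise both $\varphi(x,\lambda_{n}(0))$ and $\theta(x,\lambda_{n}(0))$ would be eigenfunctions), then (61), (63) and (64) give $\alpha_{n}(t)\sim t^{2(m-1)/m}$ with exponent $\geq1$, and Proposition 3(b) yields the ESS property, with the $G_{t}$-version (32) handling the case $\theta^{\prime}(\lambda_{n}(0))\neq0$. Only trivial slips: the displayed asymptotics is (75), not (76), and the norm estimate you quote is the one from Theorem 1, Case 1 (used again in Theorem 2); neither affects the argument.
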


Now we use the following classical result (see p.8-9 of [5] and p.34-35 of [1]):

\textit{ If }$q$\textit{ is an even function, then the eigenvalues of }$L_{0}%
$\textit{ and }$L_{\pi}$\textit{ are either Dirichlet or Naimann eigenvalues.}

Note that in [1, 5] this result were proved for the real-valued potentials.
However, the proof pass through for the complex-valued potentials without any
change. \ This classical result shows that Theorem 2(d) does not hold for the
even potentials. Therefore, Theorem 2, Proposition 2 and Theorem 1 immediately
imply the following:

\begin{corollary}
If the potential $q$ is an even function, then

$(a)$ The operators $L_{0}(q)$\textit{ and }$L_{\pi}(q)$ have no associated
functions corresponding to the large eigenvalues.

$(b)$The large eigenvalues of $L_{0}(q)$\textit{ and }$L_{\pi}(q)$ are not
spectral singularities.

$(c)$ The operator $L(q)$ may have only finite number of ESS.\textit{ }
\end{corollary}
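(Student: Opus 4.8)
The plan is to derive Corollary 1 as a straightforward combination of the classical even-potential result with Theorems 1 and 2 and Proposition 2, treating each of the three claims in turn.

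\textbf{Proof of part (a).} Suppose $q$ is even and let $\lambda_0$ be a large eigenvalue of $L_0(q)$ (the argument for $L_\pi$ is identical). If $\lambda_0$ is simple there is nothing to prove, so assume it is multiple; being large, it is then double, so the analysis of Theorem 2 applies. By the quoted classical result, $\lambda_0$ is either a Dirichlet eigenvalue ($\varphi(\lambda_0)=0$) or a Neumann eigenvalue ($\theta'(\lambda_0)=0$); hence condition (d) of Theorem 2 fails. By the equivalence of (c) and (d) established in Theorem 2, the geometric multiplicity of $\lambda_0$ must therefore be $2$, i.e. $\lambda_0$ admits two linearly independent eigenfunctions and no associated function. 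Thus $L_0(q)$ (resp.\ $L_\pi(q)$) has no associated function corresponding to any large eigenvalue.

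\textbf{Proof of part (b).} With $q$ even and $\lambda_0$ a large eigenvalue of $L_0(q)$ or $L_\pi(q)$, the same failure of Theorem 2(d) together with the equivalence of (b) and (d) in Theorem 2 shows directly that $\lambda_0$ is not a spectral singularity of $L$. (If $\lambda_0$ is simple it is trivially not a spectral singularity since $\mathbb{S}\subset\mathbb{M}$ by Proposition 2.)

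\textbf{Proof of part (c).} By Proposition 2 every ESS is a spectral singularity and every spectral singularity is a multiple eigenvalue of some $L_t$, $t\in(-\pi,\pi]$. By Theorem 1, no multiple eigenvalue with $t_0\in(0,\pi)$ is an ESS; the same holds for $t_0\in(-\pi,0)$ by the symmetry $\lambda_n(-t)=\lambda_n(t)$. Hence every ESS must be a multiple eigenvalue of $L_0$ or $L_\pi$. The large multiple eigenvalues of $L_0$ and $L_\pi$ are, by part (b), not spectral singularities, hence not ESS by Proposition 2. Thus any ESS lies among the multiple eigenvalues of $L_0$ and $L_\pi$ that are not large; since $F'(\lambda)$ is a nonzero entire function, its zeros — and in particular the multiple eigenvalues of $L_0$ and $L_\pi$ — have no finite accumulation point, so only finitely many of them fail to be large. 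Therefore $L(q)$ has at most finitely many ESS.

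I do not expect a genuine obstacle here: the corollary is purely a bookkeeping synthesis of already-proved facts, and the only point requiring the slightest care is making explicit that "large eigenvalue" in Theorems 1 and 2 leaves out only finitely many exceptional multiple eigenvalues, which is immediate from the discreteness of the zero set of $F'$.
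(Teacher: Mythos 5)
Your proposal is correct and follows essentially the same route as the paper, which derives the corollary directly from the classical even-potential fact combined with Theorem 2 (the equivalences (b)$\Leftrightarrow$(c)$\Leftrightarrow$(d)), Proposition 2 ($\mathbb{E}\subset\mathbb{S}\subset\mathbb{M}$), and Theorem 1. Your only addition is to spell out the finiteness bookkeeping in part (c), which the paper leaves implicit.
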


\section{Spectral expansion}

In this section we construct spectral expansion by using (47). The term by
term integration in (47) was proved in the papers [14, 16, 18] for the curve
$l$ satisfying (42). Here we prove it for a little different curve and by the
other method for the independence of this paper. For this first let us
construct the suitable curve of integration by taking into account the results
of Section 2. Since, only the eigenvalues $\lambda_{n}(0)$ and $\lambda
_{n}(\pi)$ may became the ESS, we choose the curve of integration so that it
only pass over the points $0$ and $\pi$. Namely, we construct the curve of
integration as follows. Let $h$ be positive number such that
\begin{equation}
F^{^{\prime}}(\lambda_{n}(t))\neq0,\text{ }\varphi(\lambda_{n}(t))\neq0,\text{
}\forall t\in\left(  \gamma(0,h)\cup\gamma(\pi,h)\right)  ,\text{ }\forall
n\in\mathbb{Z}\text{,}%
\end{equation}
where $\gamma(0,h)$ and $\gamma(\pi,h)$ are the semicircles
\begin{equation}
\gamma(0,h)=\{\left\vert t\right\vert =h,\operatorname{Im}t\geq0\},\text{
}\gamma(\pi,h)=\{\left\vert t-\pi\right\vert =h,\operatorname{Im}t\geq0\}.
\end{equation}
Since the accumulation points of the roots of the equations $F^{^{\prime}%
}(\lambda_{n}(t))=0,$ $\varphi(\lambda_{n}(t))=0$ are $0$ and $\pi$ there
exist $\gamma(0,h)$ and $\gamma(\pi,h)$ satisfying (76). Define $l(h)$ by
\begin{equation}
l(h)=B(h)\cup\gamma(0,h)\cup\gamma(\pi,h)),
\end{equation}
where $B(h)=[h,\pi-h]\cup\lbrack\pi+h,2\pi-h].$ Thus $l(h)$ consist of the
intervals $[h,\pi-h]$ and $[\pi+h,2\pi-h]$ and semicircles (77). Denote the
points of $A\cap B(h)$ by $t_{1},t_{2},...,t_{s}$ and put
\begin{equation}
E(h)=B(h)\backslash\left\{  t_{1},t_{2},...,t_{s}\right\}  ,
\end{equation}
where $\ A$ is defined in (15) and $A\cap B(h)$ is a finite set because the
accumulation points of $A$ are $0$ and $\pi.$ In (47) instead of $l$ using
$l(h)=B(h)\cup\gamma(0,h)\cup\gamma(\pi,h)$ and taking into account that
integral over $B(h)$ is equal to the integral over $E(h)$ we obtain%
\begin{equation}
f=\frac{1}{2\pi}\left(  \int\limits_{E(h)}f_{t}(x)dt+\int\limits_{\gamma
(0,h)}f_{t}(x)dt+\int\limits_{\gamma(\pi,h)}f_{t}(x)dt\right)
\end{equation}
and by (44)
\begin{align}
\int\limits_{E(h)}f_{t}(x)dt  &  =\int\limits_{E(h)}\sum\limits_{k\in
\mathbb{Z}}a_{k}(t)\Psi_{k,t}(x)dt,\\
\int\limits_{\gamma(0,h)}f_{t}(x)dt  &  =\int\limits_{\gamma(0,h)}%
\sum\limits_{k\in\mathbb{Z}}a_{k}(t)\Psi_{k,t}(x)dt,\int\limits_{\gamma
(\pi,h)}f_{t}(x)dt=\int\limits_{\gamma(\pi,h)}\sum\limits_{k\in\mathbb{Z}%
}a_{k}(t)\Psi_{k,t}(x)dt.\nonumber
\end{align}
Now we prove that the series in (81) can be integrated term by term. First we
prove it for the first integral (see Theorem 3) and then for the second and
third integral (see Theorem 4), that is, first we prove the following
\begin{equation}
\int\limits_{E(h)}\sum\limits_{k\in\mathbb{Z}}a_{k}(t)\Psi_{k,t}dt=\sum
_{n\in\mathbb{Z}}\int\limits_{E(h)}a_{n}(t)\Psi_{n,t}(x)dt.
\end{equation}
For this we show that the integrals in the right-hand side of (82) exists (see
Proposition 4) and then estimate the remainders
\begin{equation}
R_{n}(x,t)=\sum_{k>n}a_{k}(t)\Psi_{k,t}(x),\text{ }R_{-n}(x,t)=\sum
_{k<-n}a_{k}(t)\Psi_{k,t}(x)
\end{equation}
(see Lemma 2) of the series
\begin{equation}
\sum_{k\in\mathbb{Z}}a_{k}(t)\Psi_{k,t}(x).
\end{equation}

\begin{proposition}
Let $f$ be continuous and compactly supported function and $\delta\in(0,1).$

$(a)$ For each $n\in\mathbb{Z}$ the integral
\begin{equation}
\int\limits_{E(\delta)}a_{n}(t)\Psi_{n,t}(x)dt
\end{equation}
exists, where $E(\delta)$ and $a_{n}(t)$ are defined in (79) and (44).

$(b)$ If $\lambda_{n}(0)$ and $\lambda_{n}(\pi)$ are not ESS then the
integrals
\begin{equation}
\int\limits_{(-\delta,\delta)}a_{n}(t)\Psi_{n,t}(x)dt\text{ }\And\text{ }%
\int\limits_{(\pi-\delta,\pi+\delta)}a_{n}(t)\Psi_{n,t}(x)dt
\end{equation}
exist respectively.
\end{proposition}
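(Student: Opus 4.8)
The plan is to reduce both statements to the integrability of $\frac{1}{\alpha_n}$ over the relevant set. First I would write
\[
a_n(t)\Psi_{n,t}(x)=\frac{1}{\alpha_n(t)}\,(f,\Psi_{n,t}^{\ast})_{\mathbb{R}}\,\Psi_{n,t}(x),
\]
using $a_n(t)=\int_{\mathbb{R}}f(x)\overline{X_{n,t}(x)}\,dx$ and $X_{n,t}=\frac{1}{\overline{\alpha_n(t)}}\Psi_{n,t}^{\ast}$. Since $f$ is continuous with compact support, splitting $(f,\Psi_{n,t}^{\ast})_{\mathbb{R}}$ into finitely many integrals over translates of $[0,1]$, using the quasiperiodicity $\Psi_{n,t}^{\ast}(x+1)=e^{i\overline{t}}\Psi_{n,t}^{\ast}(x)$ and $\|\Psi_{n,t}^{\ast}\|_{L_2[0,1]}=1$, one sees that $|(f,\Psi_{n,t}^{\ast})_{\mathbb{R}}|$ is bounded by a constant depending only on $f$ and that $t\mapsto(f,\Psi_{n,t}^{\ast})_{\mathbb{R}}$ is piecewise continuous on $(-\pi,0)\cup(0,\pi)$ by Lemma 1. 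By Lemma 1(c),(d), for each fixed $x$ the function $\Psi_{n,t}(x)$ is bounded and piecewise continuous on $(-\pi,0)\cup(0,\pi)$, hence on the deleted neighbourhoods $(-\delta,\delta)\setminus\{0\}$ and $(\pi-\delta,\pi+\delta)\setminus\{\pi\}$ (even when the geometric multiplicity of $\lambda_n(0)$ or $\lambda_n(\pi)$ is $2$, as the proof of Lemma 1(c) shows). Consequently $a_n(t)\Psi_{n,t}(x)$ is piecewise continuous on $(-\pi,0)\cup(0,\pi)$ and there $|a_n(t)\Psi_{n,t}(x)|\le C(x)\,\frac{1}{|\alpha_n(t)|}$; so each of the integrals in $(a)$ and $(b)$ exists, as an absolutely convergent improper integral, as soon as $\frac{1}{\alpha_n}$ is integrable over the corresponding set.

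For $(a)$ I would note that $E(\delta)$ lies at distance at least $\delta$ from $\{0,\pi\}$ modulo $2\pi$ and, by construction, $E(\delta)\cap A=\varnothing$; hence by Lemma 1(d) both $\frac{1}{\alpha_n}$ and $a_n(t)\Psi_{n,t}(x)$ are continuous on $E(\delta)$. Thus the existence of $\int_{E(\delta)}a_n(t)\Psi_{n,t}(x)\,dt$ reduces to absolute convergence as $t$ tends to each of the finitely many removed points $t_j\in A\cap B(\delta)$. If $t_j\notin A_n$, then $\lambda_n(t_j)$ is a simple eigenvalue of $L_{t_j}$, so $\frac{1}{\alpha_n}$ is continuous at $t_j$ and the integrand extends continuously across it. If $t_j\in A_n$, then $\lambda_n(t_j)$ is a multiple eigenvalue of $L_{t_j}$ with $t_j\in(-\pi,0)\cup(0,\pi)$; by Theorem 1 it is not an ESS, so by Definition 3 (the two one-sided neighbourhoods being disjoint here) $\frac{1}{\alpha_n}$ is integrable on a deleted neighbourhood of $t_j$, and therefore so is $a_n(t)\Psi_{n,t}(x)$. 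This gives $(a)$.

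For $(b)$ I would treat $\int_{(-\delta,\delta)}$ (the interval around $\pi$ being identical with $L_\pi$ in place of $L_0$). On $(-\delta,0)\cup(0,\delta)$ the integrand is piecewise continuous with possible singularities, by Lemma 1(d), only at the finitely many points of $A_n$ in this set; each such point is distinct from $0$, hence lies in $(-\pi,0)\cup(0,\pi)$, so by Theorem 1 it is not an ESS and $\frac{1}{\alpha_n}$ is integrable near it. At $t=0$, the hypothesis that $\lambda_n(0)$ is not an ESS means, by Definition 3 with $t_0=0$, that for some $\varepsilon\in(0,\delta)$ the function $\frac{1}{\alpha_n}$ is integrable on $(-\varepsilon,\varepsilon)\setminus A_n$. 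Combining these, $\frac{1}{\alpha_n}$ is integrable on $(-\delta,\delta)\setminus A_n$, a subset of full measure; since $|a_n(t)\Psi_{n,t}(x)|\le C(x)\,\frac{1}{|\alpha_n(t)|}$ on $(-\delta,\delta)\setminus\{0\}$, the integrand is absolutely integrable on $(-\delta,\delta)$ and the integral exists. The same argument at $\pi$ completes $(b)$.

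I expect the main difficulty to be bookkeeping rather than a new idea: one must check that every $t$ at which $a_n(t)\Psi_{n,t}(x)$ can fail to be continuous along the contour --- the points of $A$ (resp.\ $A_n$), the finitely many $t$ at which $\|\Phi_t(\cdot,\lambda_n(t))\|$ or $\|G_t(\cdot,\lambda_n(t))\|$ vanishes, and the distinguished points $0,\pi$ --- is either a removable point, where an alternative representation from Lemma 1 keeps $\Psi_{n,t}$ and $\frac{1}{\alpha_n}$ bounded, or an integrable singularity; the decisive inputs here are Theorem 1 for the interior multiple eigenvalues and Definition 3 (the ESS hypothesis) at $0$ and $\pi$. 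The one genuinely quantitative point is the uniform-in-$t$ boundedness of $(f,\Psi_{n,t}^{\ast})_{\mathbb{R}}$, for which the continuity and compact support of $f$ and the normalization $\|\Psi_{n,t}^{\ast}\|=1$ are used.
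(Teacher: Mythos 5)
Your proposal is correct and follows essentially the same route as the paper: the pointwise bound $|a_n(t)\Psi_{n,t}(x)|\le C(x)\,|\alpha_n(t)|^{-1}$ (from the compact support and continuity of $f$, the normalization of $\Psi_{n,t}^{\ast}$, and Lemma 1(c),(d)), combined with integrability of $1/\alpha_n$ away from $0,\pi$ via Theorem 1 and Definition 3, and via the no-ESS hypothesis at $0$ and $\pi$ for part (b). Your treatment of the finitely many points of $A\cap B(\delta)$ is just a more explicit spelling-out of the paper's one-line appeal to "Theorem 1 with Definition 3", so there is nothing to correct.
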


\begin{proof}
$(a)$ Theorem 1 with the Definition 3 implies that $\tfrac{1}{\alpha_{n}}$ is
integrable on $E(h).$ Using the definitions of $a_{n}(t)$ and $f_{t}$ (see
(40)) and Schwarz inequality and taking into account that $f$ is a continuous
and compactly supported function we obtain that there exists a number $M$ such
that
\begin{equation}
\left\vert a_{n}(t)\right\vert \leq M\left\vert \tfrac{1}{\alpha_{n}%
(t)}\right\vert ,\forall t\in(-\pi,0)\cup(0,\pi).
\end{equation}
On the other hand, it follows from Lemma 1 that $a_{n}(t)$ is a piecewise
continuous function and for each fixed $x$, $\Psi_{n,t}(x)$ is a piecewise
continuous and bounded function on $E(h).$ Therefore the integral (85) exists.

$(b)$ If $\lambda_{n}(0)$ is not ESS then by Definition 3, $\tfrac{1}%
{\alpha_{n}}$ is integrable on $(-\varepsilon,\varepsilon)$ for some
$\varepsilon>0.$ Therefore using (87) and arguing as in the proof of $(a)$ we
see that the first integral in (86) exists. In the same way we prove that the
second integral exists too.
\end{proof}

\begin{remark}
Let $E$ be a subset of $L_{2}(-\infty,\infty)$ such that if $f\in E,$ then the
norm $\left\Vert f_{t}\right\Vert $ of the Gelfand transform $f_{t}%
(x)=\Upsilon f$ $(t)$, defined by (40), is bounded in $(-\pi,\pi]$ almost
everywhere. Then, by the Schwarz inequality, (87) holds almost everywhere.
Therefore the proof of the Proposition 5 shows that if $\tfrac{1}{\alpha
_{n}(t)}$ is integrable over the measurable subset $I$ of $(-\pi,\pi]$ then
$a_{n}(t)\Psi_{n,t}(x)$ is also integrable in $I$ for each $x\in\lbrack0,1].$

Now, conversely, suppose that $\tfrac{1}{\alpha_{n}(t)}$ is not integrable
over $I.$ By the definition of $E,$ the equality $\left\Vert \Psi_{n,t}^{\ast
}\right\Vert =1$ implies that $\Upsilon^{-1}\Psi_{n,t}^{\ast}\in E,$ where
$\Upsilon^{-1}$ is the inverse \ Gelfand transform. Let $f$ $=\Upsilon
^{-1}\Psi_{n,t}^{\ast}$ . Then $a_{n}(t)=\tfrac{1}{\alpha_{n}(t)}$. Therefore
using Lemma 1 one can easily show that $a_{n}(t)\Psi_{n,t}(x)$ is not
integrable on $I$ for some $x\in\lbrack0,1].$
\end{remark}

Now we estimate (83), by using the following uniform with respect to $t$ in
$E(h)$ asymptotic formulas
\begin{equation}
\Psi_{n,t}(x)=e^{i(2\pi n+t)x}+h_{n,t}(x),\text{ }\left\Vert h_{n,t}%
\right\Vert =O(n^{-1}),
\end{equation}%
\begin{equation}
\Psi_{n,t}^{\ast}(x)=e^{i(2\pi n+t)x}+h_{n,t}^{\ast}(x),\text{ }\left\Vert
h_{n,t}^{\ast}\right\Vert =O(n^{-1}),\text{ }\tfrac{1}{\alpha_{n}%
(t)}=1+O(n^{-1})
\end{equation}
(see (17), (19) and (24)).

\begin{lemma}
There exist a positive constants $N$ and $c,$ independent of $t,$ such that
\begin{equation}
\parallel R_{n}(\cdot,t)\parallel^{2}\leq c\left(
%TCIMACRO{\tsum \limits_{k>n}}%
%BeginExpansion
{\textstyle\sum\limits_{k>n}}
%EndExpansion
\mid(f_{t},e^{i(2\pi k+t)x})\mid^{2}+\frac{1}{n}\right)
\end{equation}
for $n>N$ and $t\in E(h).$
\end{lemma}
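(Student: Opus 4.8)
The idea is to split $R_n(\cdot,t)$ into its "leading part" coming from the unperturbed exponentials and a "tail part" coming from the error terms $h_{k,t}$ in the asymptotic formula (89). Write
$a_k(t)\Psi_{k,t}(x)=a_k(t)e^{i(2\pi k+t)x}+a_k(t)h_{k,t}(x)$,
and correspondingly $R_n=R_n^{(1)}+R_n^{(2)}$ where $R_n^{(1)}(x,t)=\sum_{k>n}a_k(t)e^{i(2\pi k+t)x}$ and $R_n^{(2)}(x,t)=\sum_{k>n}a_k(t)h_{k,t}(x)$. First I would control $a_k(t)$: from (90), $\tfrac{1}{\alpha_k(t)}=1+O(k^{-1})$ uniformly on $E(h)$, so there is an $N$ and a constant such that $|a_k(t)|\le 2|(f_t,\Psi_{k,t}^\ast)|$ for $k>N$; and using (90) again, $\Psi_{k,t}^\ast=e^{i(2\pi k+t)x}+h_{k,t}^\ast$ with $\|h_{k,t}^\ast\|=O(k^{-1})$, so by Schwarz $|(f_t,\Psi_{k,t}^\ast)|\le |(f_t,e^{i(2\pi k+t)x})|+\|f_t\|\,O(k^{-1})$. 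Since $f$ is continuous and compactly supported, $\|f_t\|$ is bounded uniformly on $E(h)$ (in fact $f_t$ is a finite sum of translates of $f$ times unimodular factors), so altogether $|a_k(t)|\le 2|(f_t,e^{i(2\pi k+t)x})|+C/k$ for all $k>N$ and $t\in E(h)$.

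Next I would estimate $\|R_n^{(1)}(\cdot,t)\|^2$. The functions $\{e^{i(2\pi k+t)x}:k\in\mathbb{Z}\}$ form an orthonormal system in $L_2[0,1]$ (up to the normalization built into (89)), so $\|R_n^{(1)}(\cdot,t)\|^2=\sum_{k>n}|a_k(t)|^2$. Plugging in the bound on $|a_k(t)|$ and using $(u+v)^2\le 2u^2+2v^2$, this is at most $8\sum_{k>n}|(f_t,e^{i(2\pi k+t)x})|^2+2C^2\sum_{k>n}k^{-2}\le 8\sum_{k>n}|(f_t,e^{i(2\pi k+t)x})|^2+C'/n$, which is exactly the desired form. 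For $\|R_n^{(2)}(\cdot,t)\|^2$ I would use the triangle inequality and Cauchy–Schwarz in a weighted form: $\|R_n^{(2)}(\cdot,t)\|\le\sum_{k>n}|a_k(t)|\,\|h_{k,t}\|\le\sum_{k>n}|a_k(t)|\,O(k^{-1})$; applying Cauchy–Schwarz, $\bigl(\sum_{k>n}|a_k(t)|k^{-1}\bigr)^2\le\bigl(\sum_{k>n}|a_k(t)|^2\bigr)\bigl(\sum_{k>n}k^{-2}\bigr)\le\bigl(\sum_{k>n}|a_k(t)|^2\bigr)\cdot O(1/n)$, and then the already-established bound on $\sum_{k>n}|a_k(t)|^2$ gives $\|R_n^{(2)}(\cdot,t)\|^2=O(1/n)\bigl(\sum_{k>n}|(f_t,e^{i(2\pi k+t)x})|^2+C'/n\bigr)$. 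Since $\sum_k|(f_t,e^{i(2\pi k+t)x})|^2\le\|f_t\|^2$ is uniformly bounded on $E(h)$, this whole term is $O(1/n)$ and gets absorbed. Finally $\|R_n\|^2\le 2\|R_n^{(1)}\|^2+2\|R_n^{(2)}\|^2$ yields (91).

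The main obstacle is making the uniformity in $t\in E(h)$ genuinely rigorous everywhere. Three places need care: (i) the orthonormality of the exponentials $e^{i(2\pi k+t)x}$ in $L_2[0,1]$ holds for every $t$ (the phase $e^{itx}$ is common and $\|e^{itx}\|$ is just a real constant depending on $\operatorname{Im}t$, bounded away from $0$ and $\infty$ on $E(h)$ since $E(h)$ is bounded), so the normalization constants in (89) are uniformly controlled — this should be stated explicitly; (ii) the $O$-terms in (89)–(90) are asserted to be uniform on $E(h)$, so I must quote that uniformity (it follows from the cited results (17),(19) together with the fact that $E(h)\subset Q_h$ for suitable $h$, i.e. $E(h)$ stays a fixed distance from $0$ and $\pi$); and (iii) the uniform bound $\sup_{t\in E(h)}\|f_t\|<\infty$ for continuous compactly supported $f$ — this is immediate from $f_t=\sum_{k}f(\cdot+k)e^{ikt}$ having only finitely many nonzero summands. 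Once these uniformities are in hand, the constant $c$ and threshold $N$ produced by the argument depend only on $h$ (through the implied constants in (89)–(90) and on $\|f_t\|$), not on $t$, which is what the lemma requires.
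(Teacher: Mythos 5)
Your proposal is correct and follows essentially the same route as the paper: bound $|a_k(t)|$ via the uniform asymptotics for $\tfrac{1}{\alpha_k}$ and $\Psi^{\ast}_{k,t}$ together with the uniform bound on $\Vert f_t\Vert$, split $R_n$ into the exponential part (handled by orthonormality/Bessel) and the $h_{k,t}$-tail (handled by Cauchy--Schwarz in $\ell_2$), then combine. Your weighted Cauchy--Schwarz with the factors $k^{-1}$ is in fact a slightly cleaner rendering of the paper's estimate of $S_2$, and your explicit remarks on uniformity in $t\in E(h)$ match what the paper implicitly uses.
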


\begin{proof}
During the proof of the lemma we denote by $c_{1},c_{2},...$ the positive
constants that do not depend on $t.$ They will be used in the sense that there
exists $c_{i}$ such that the inequality holds. To prove (90) first we prove
the inequality
\begin{equation}
\sum\limits_{k>n}\mid a_{k}(t)\mid^{2}\leq c_{1}\left(  \sum\limits_{k>n}%
\mid(f_{t},e^{i(2\pi k+t)x})\mid^{2}+\frac{1}{n}\right)  ,
\end{equation}
where $a_{k}(t)$\ is defined in (44), and then the equality
\begin{equation}
\parallel R_{n}(.,t)\parallel^{2}=\left(  1+O(n^{-1})\right)  \sum
\limits_{k>n}\mid a_{k}(t)\mid^{2}.
\end{equation}
It follows from (89) that
\begin{equation}
\mid a_{k}(t)\mid^{2}\leq8\mid(f_{t},e^{i(2\pi k+t)x})\mid^{2}+8\mid
(f_{t},h_{n,t}^{\ast})\mid^{2}.
\end{equation}
Since $f$ is a compactly supported and continuous function we have
\begin{equation}
\parallel f_{t}\parallel^{2}<c_{2}.
\end{equation}
It with the Schwarz inequality and second equality of (89) implies that%
\begin{equation}
\mid(f_{t},h_{n,t}^{\ast})\mid^{2}<c_{3}n^{-2}.
\end{equation}
Therefore (91) follows from (93).

Now we prove (92). Since $\left\{  e^{i2\pi kx}:k\in\mathbb{Z}\right\}  $ is
an orthonormal basis, using the Bessel inequality and (94) we obtain
\[
\sum_{k:\mid k\mid>N}\mid(f_{t},e^{i(2\pi k+t)x})\mid^{2}\leq\parallel
f_{t}\parallel^{2}<c_{2}.
\]
Hence, it follows from (91) that
\[
\sum_{k:\mid k\mid>n}\mid a_{k}(t)\mid^{2}\leq c_{4}%
\]
and by (88), $\left\Vert a_{k}(t)h_{k,t}(x)\right\Vert \leq\mid a_{k}%
(t)\mid^{2}+c_{5}n^{-2}.$ Therefore the series
\[
\sum_{k>n}a_{k}(t)e^{i(2\pi k+t)x}\text{ }\And\text{\ }\sum_{k>n}%
a_{k}(t)h_{k,t}(x)
\]
converge in the norm of $L_{2}(0,1)$ and we have
\begin{equation}
\parallel R_{n}(.,t)\parallel^{2}=\left\Vert \sum_{k>n}a_{k}(t)e^{i(2\pi
k+t)x}+\sum_{k>n}a_{k}(t)h_{k,t})\right\Vert ^{2}\leq2S_{1}+2S_{2}^{2}%
\end{equation}
where
\begin{equation}
S_{2}=\left\Vert \sum_{k>n}a_{k}(t)h_{k,t}\right\Vert
\end{equation}
and
\begin{equation}
S_{1}=\left\Vert \sum_{k>n}a_{k}(t)e^{i(2\pi k+t)x}\right\Vert ^{2}%
=\sum\limits_{k>n}\mid a_{k}(t)\mid^{2}.
\end{equation}
\ Now let us estimate $S_{2}.$ It follows from the second equality of (88)
that
\[
S_{2}\leq c_{6}\sum_{k>n}\mid a_{k}(t)\mid\frac{1}{\mid n\mid}.
\]
Therefore using the Schwarz inequality for $l_{2}$ we obtain
\begin{equation}
S_{2}^{2}=\left(  \sum_{k>n}\mid a_{k}(t)\mid^{2}\right)  O(n^{-1}).
\end{equation}
Thus (92) follows from (96), (98) and \ (99). It with (91) yields the proof of
the lemma
\end{proof}

Now we are ready to prove the following

\begin{theorem}
For every compactly supported and continuous function $f$ the equality
\begin{equation}
\int\limits_{E(h)}f_{t}(x)dt=%
%TCIMACRO{\dsum \limits_{k\in\mathbb{Z}}}%
%BeginExpansion
{\displaystyle\sum\limits_{k\in\mathbb{Z}}}
%EndExpansion
\int\limits_{E(h)}a_{k}(t)\Psi_{k,t}(x)dt
\end{equation}
holds, where $0<h<\frac{1}{15\pi}.$ The series in (100) converges in the norm
of $L_{2}(a,b)$ for every $a,b\in\mathbb{R}.$
\end{theorem}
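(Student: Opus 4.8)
The starting point is the identity $\int_{E(h)}f_{t}(x)\,dt=\int_{E(h)}\sum_{k\in\mathbb{Z}}a_{k}(t)\Psi_{k,t}(x)\,dt$ already recorded in (81); the content of the theorem is that the summation may be moved outside the integral over $E(h)$. Fix $n$ and split the Riesz-basis expansion (44) of $f_{t}$ into its symmetric partial sum and the two remainders (83),
\[
f_{t}(x)=\sum_{|k|\le n}a_{k}(t)\Psi_{k,t}(x)+R_{n}(x,t)+R_{-n}(x,t).
\]
By Proposition 5(a) each integral $\int_{E(h)}a_{k}(t)\Psi_{k,t}(x)\,dt$ exists, so the finite sum can be pulled out of the integral and
\[
\int_{E(h)}f_{t}(x)\,dt-\sum_{|k|\le n}\int_{E(h)}a_{k}(t)\Psi_{k,t}(x)\,dt=\int_{E(h)}\bigl(R_{n}(x,t)+R_{-n}(x,t)\bigr)\,dt .
\]
Hence it suffices to show that the right-hand side tends to $0$ in $L_{2}(a,b)$ as $n\to\infty$ for every $a,b\in\mathbb{R}$; the existence of this limit then yields both the equality (100) and the asserted $L_{2}(a,b)$-convergence of the series.

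The remainders are controlled by Lemma 2: there are constants $N$ and $c$, independent of $t$, such that (90) holds for $n>N$ and $t\in E(h)$, together with the analogous bound for $R_{-n}$ obtained by the same argument with $k>n$ replaced by $k<-n$. Since $E(h)$ is a finite union of bounded real intervals of total length less than $2\pi$, integrating (90) in $t$ gives
\[
\int_{E(h)}\|R_{n}(\cdot,t)\|^{2}\,dt\le c\int_{E(h)}\sum_{k>n}|(f_{t},e^{i(2\pi k+t)x})|^{2}\,dt+\frac{2\pi c}{n}.
\]
By (40) and the Parseval equality for the orthonormal basis $\{e^{i(2\pi k+t)x}:k\in\mathbb{Z}\}$ of $L_{2}(0,1)$ one has $\frac{1}{2\pi}\int_{0}^{2\pi}\sum_{k\in\mathbb{Z}}|(f_{t},e^{i(2\pi k+t)x})|^{2}\,dt=\|f\|_{L_{2}(\mathbb{R})}^{2}<\infty$, so the nonnegative integrand $\sum_{k>n}|(f_{t},e^{i(2\pi k+t)x})|^{2}$ is dominated for every $n$ by the integrable function $\sum_{k\in\mathbb{Z}}|(f_{t},e^{i(2\pi k+t)x})|^{2}$ and tends to $0$ for a.e.\ $t$. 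By dominated convergence the first term on the right above tends to $0$, whence $\int_{E(h)}\|R_{n}(\cdot,t)\|^{2}\,dt\to0$; likewise $\int_{E(h)}\|R_{-n}(\cdot,t)\|^{2}\,dt\to0$.

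It remains to pass from $L_{2}(0,1)$ to $L_{2}(a,b)$. The functions $\Psi_{k,t}$, hence $R_{\pm n}(\cdot,t)$, extend to $\mathbb{R}$ through (45), and for $t\in E(h)\subset\mathbb{R}$ one has $|e^{it}|=1$, so $|R_{\pm n}(x+1,t)|=|R_{\pm n}(x,t)|$ and thus $\|R_{\pm n}(\cdot,t)\|_{L_{2}(a,b)}^{2}\le(\lceil b-a\rceil+1)\|R_{\pm n}(\cdot,t)\|_{L_{2}(0,1)}^{2}$. Combining Minkowski's integral inequality with the Cauchy--Schwarz inequality in $t$,
\begin{multline*}
\Bigl\|\int_{E(h)}R_{\pm n}(\cdot,t)\,dt\Bigr\|_{L_{2}(a,b)}\le\sqrt{\lceil b-a\rceil+1}\int_{E(h)}\|R_{\pm n}(\cdot,t)\|_{L_{2}(0,1)}\,dt\\
\le\sqrt{2\pi(\lceil b-a\rceil+1)}\Bigl(\int_{E(h)}\|R_{\pm n}(\cdot,t)\|_{L_{2}(0,1)}^{2}\,dt\Bigr)^{1/2}\longrightarrow0 ,
\end{multline*}
which finishes the proof. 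The main obstacle is the middle step: obtaining a bound on $\|R_{n}(\cdot,t)\|$ uniform in $t\in E(h)$ — this is exactly Lemma 2, built on the eigenfunction asymptotics (88)--(89) — and then recognizing the $t$-integral of its leading part as a tail of the Parseval sum (40), which forces it to zero; the remaining ingredients (existence of the individual integrals via Proposition 5(a) and Theorem 1, and the elementary $L_{2}(a,b)$ estimate) are routine.
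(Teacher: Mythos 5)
Your proof is correct and follows essentially the same route as the paper: split off the symmetric partial sum, invoke Proposition 5(a) for the individual integrals, control the remainders $R_{\pm n}$ via Lemma 2, and use the quasi-periodicity $R_{\pm n}(x+1,t)=e^{it}R_{\pm n}(x,t)$ to pass to $L_{2}(a,b)$. The only (harmless) variation is in the middle step: you send $\int_{E(h)}\Vert R_{n}(\cdot,t)\Vert^{2}dt$ to zero by dominated convergence against the Parseval bound from (40), whereas the paper argues that the tail sums $\sum_{k>n}\vert(f_{t},e^{i(2\pi k+t)x})\vert^{2}$ converge uniformly on $E(h)$ and then integrates.
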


\begin{proof}
By (41) and (45) we have $R_{n}(x+1,t)=e^{it}R_{n}(x,t).$ Therefore it follows
from (90) that
\begin{equation}
\parallel R_{n}(\cdot,t)\parallel_{(-m,m)}^{2}\leq2mc\left(
%TCIMACRO{\tsum \limits_{k>n}}%
%BeginExpansion
{\textstyle\sum\limits_{k>n}}
%EndExpansion
\mid(f_{t},e^{i(2\pi k+t)x})\mid^{2}+\frac{1}{n}\right)  ,
\end{equation}
where $\parallel f\parallel_{(-m,m)}$ is the $L_{2}(-m,m)$ norm of $f$. Since
the sequence
\[
\left\{
%TCIMACRO{\tsum \limits_{k>n}}%
%BeginExpansion
{\textstyle\sum\limits_{k>n}}
%EndExpansion
\mid(f_{t},e^{i(2\pi k+t)x})\mid^{2}:\text{ }n=1,2,...,\right\}
\]
of the continuous nonincreasing functions converges uniformly to zero on
$[-\pi,\pi]$ it follows from (101) that $\parallel R_{n}(\cdot,t)\parallel
_{(-m,m)}^{2}$ also converges to zero uniformly on $E(h)$ as $n\rightarrow
\infty.$ It implies that
\begin{equation}
\int\limits_{E(h)}\int\limits_{(-m,m)}\mid R_{n}(x,t)\mid^{2}dxdt\rightarrow0
\end{equation}
as $n\rightarrow\infty.$ Now using the obvious inequality $\left\vert \int
_{E}f(t)dt\right\vert ^{2}\leq2\pi\int_{E}\left\vert f(t)\right\vert ^{2}dt$
and (83), (102), we obtain%
\begin{equation}
\left\Vert \int\limits_{E}\sum_{k>n}a_{k}(t)\Psi_{k,t}dt\right\Vert
_{(-m,m)}^{2}\leq2\pi\int\limits_{(-m,m)}\int\limits_{E}\left\vert \sum
_{k>n}a_{k}(t)\Psi_{k,t}(x)\right\vert ^{2}dtdx\rightarrow0
\end{equation}
as $n\rightarrow\infty$. Thus we have
\begin{equation}
\int\limits_{E}\sum_{k>N}a_{k}(t)\Psi_{k,t}(x)dt=\sum_{k>N}\int\limits_{E}%
a_{k}(t)\Psi_{k,t}(x)dt,
\end{equation}
where the last series converges in the norm of $L_{2}(-m,m)$ for every
$m\in\mathbb{N}.$ In the same way we prove that
\begin{equation}
\int\limits_{E}\sum_{k<-N}a_{k}(t)\Psi_{k,t}(x)dt=\sum_{k<-N}\int
\limits_{E}a_{k}(t)\Psi_{k,t}(x)dt.
\end{equation}
\ Therefore using (104), (105) and Proposition 5(a) we get the proof of the theorem
\end{proof}

Now let us consider the term by term integration of the second and third
integral in (81). Using the conditions in (76) and arguing as in the proof of
Lemma 1 we see that for each $x\in\lbrack0,1],$ $\Psi_{n,t}(x)$ and
$\Psi_{n,t}^{\ast}(x)$ are continuous and bounded on $\gamma(0,h)\cup
\gamma(\pi,h).$ Therefore, Proposition 5 (a) continues to hold if we replace
$E(\delta)$ by $\gamma(0,h)$ and $\gamma(\pi,h).$ Similarly instead of (88)
and (89) and the orthonormal basis $\left\{  e^{i(2\pi k+t)x}:k\in
\mathbb{Z}\right\}  $ using (17) and (19) and $\left\{  \frac{1}{\parallel
e^{itx}\parallel}e^{i(2n\pi+t)x}:k\in\mathbb{Z}\right\}  $ we see that Lemma 2
continues to hold if we replace $E(h)$ by $\gamma(0,h)$ and $\gamma(\pi,h).$
In the same way we prove (102) when $E(h)$ is replaced by $\gamma(0,h)$ and
$\gamma(\pi,h).$ Thus repeating the proof of Theorem 3 we obtain

\begin{theorem}
For every compactly supported and continuous function $f$ the equalities
\begin{equation}
\int\limits_{\gamma(0,h)}f_{t}(x)dt=\sum_{k\in\mathbb{Z}}\int\limits_{\gamma
(0,h)}a_{k}(t)\Psi_{k,t}(x)dt
\end{equation}
and
\begin{equation}
\int\limits_{\gamma(\pi,h)}f_{t}(x)dt=\sum_{k\in\mathbb{Z}}\int\limits_{\gamma
(\pi,h)}a_{k}(t)\Psi_{k,t}(x)dt
\end{equation}
hold, where $0<h<\frac{1}{15\pi}$ and (76) holds. The series in (106) and
(107) converges in the norm of $L_{2}(a,b)$ for every $a,b\in\mathbb{R}.$
\end{theorem}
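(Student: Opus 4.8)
The plan is to reproduce, essentially line for line, the argument by which Theorem 3 was proved, with the real interval $E(h)$ replaced throughout by the two semicircular arcs $\gamma(0,h)$ and $\gamma(\pi,h)$ of (77); I carry out the discussion on $\gamma(0,h)$, the case $\gamma(\pi,h)$ being word for word the same. The role of condition (76) is to place these arcs in the region where the exceptional sets of Lemma 1 do not intrude: $F^{^{\prime}}(\lambda_n(t))\neq0$ makes $\lambda_n(t)$ a simple eigenvalue depending analytically on $t$ along the arc, while $\varphi(\lambda_n(t))\neq0$ together with $F^{^{\prime}}(\lambda_n(t))\neq0$ keeps the denominators $\left\Vert \Phi_{\pm t}(\cdot,\lambda_n(t))\right\Vert$ in (22) bounded below by a positive constant and $\alpha_n(t)$ bounded away from $0$, via (29).

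First I would prove the analogue of Proposition 5(a): for each $n$ the integral $\int_{\gamma(0,h)}a_n(t)\Psi_{n,t}(x)\,dt$ exists. Running the reasoning of Lemma 1(a)--(c) with the compact arc $\gamma(0,h)$ in place of $(-\pi,0)\cup(0,\pi)$ and invoking (76) as above, the functions $t\mapsto\Psi_{n,t}(x)$ and $t\mapsto\Psi_{n,t}^{\ast}(x)$ are continuous and bounded on $\gamma(0,h)$, while $\frac{1}{\alpha_n}$ is continuous there by (29). Since $f$ is compactly supported and continuous, the bound $\left\vert a_n(t)\right\vert \le M\left\vert \frac{1}{\alpha_n(t)}\right\vert$ of (87) persists on $\gamma(0,h)$; hence the integrand is bounded and continuous on a compact set and the integral exists. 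The same observation lets us replace $E(\delta)$ by $\gamma(0,h)$ or $\gamma(\pi,h)$ throughout Proposition 5(a).

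The one genuinely new point is the analogue of Lemma 2 on $\gamma(0,h)$. For $t$ with $\operatorname{Im}t>0$ the exponentials $e^{i(2\pi k+t)x}$ are no longer orthonormal in $L_2(0,1)$, so I would work with the normalized system $\{\left\Vert e^{itx}\right\Vert^{-1}e^{i(2\pi k+t)x}:k\in\mathbb{Z}\}$ and replace the asymptotic formulas (88)--(89) by (17), (19) together with $\frac{1}{\alpha_n(t)}=1+O(n^{-1})$, all of which are uniform in $t$ over $Q_h$ and hence over $\gamma(0,h)\subset Q_h$. Because $\left\Vert e^{itx}\right\Vert$ is bounded above and below by positive $t$-independent constants on the compact arc, the chain of estimates (91)--(99) carries over with $t$-independent constants, giving a bound of the form $\left\Vert R_n(\cdot,t)\right\Vert^2\le c\bigl(\sum_{k>n}\left\Vert e^{itx}\right\Vert^{-2}\left\vert (f_t,e^{i(2\pi k+t)x})\right\vert^2+n^{-1}\bigr)$ uniformly on $\gamma(0,h)$, and likewise for $R_{-n}$.

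With these two ingredients the proof of Theorem 3 transfers verbatim: from $R_n(x+1,t)=e^{it}R_n(x,t)$ one obtains the analogue of (101) on $\gamma(0,h)$; the truncated sums $\sum_{k>n}\left\Vert e^{itx}\right\Vert^{-2}\left\vert (f_t,e^{i(2\pi k+t)x})\right\vert^2$ are continuous in $t$ and decrease to $0$, hence converge to $0$ uniformly by Dini's theorem on the compact arc, so that $\left\Vert R_n(\cdot,t)\right\Vert_{(-m,m)}^2\to0$ uniformly on $\gamma(0,h)$; the inequality $\left\vert \int_{\gamma(0,h)}g\,dt\right\vert^2\le\pi h\int_{\gamma(0,h)}\left\vert g\right\vert^2\,dt$ then yields the analogues of (102)--(104), hence term-by-term integration of the tails $k>N$ and $k<-N$, while the finitely many remaining indices are handled by the analogue of Proposition 5(a) just proved. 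This gives (106), and the identical computation on $\gamma(\pi,h)$ gives (107). I expect the only real obstacle to be the bookkeeping forced by the loss of orthonormality along the complex arc --- confirming that the constants in the Lemma 2 estimates are genuinely independent of $t\in\gamma(0,h)$ --- but this is routine, given the compactness of the arcs and the uniformity over $Q_h$ of the asymptotic formulas (17) and (19).
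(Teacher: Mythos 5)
Your proposal is correct and follows essentially the same route as the paper: use (76) and the Lemma 1 argument to get continuity and boundedness of $\Psi_{n,t}(x)$, $\Psi_{n,t}^{\ast}(x)$ and $\frac{1}{\alpha_{n}(t)}$ on the compact arcs (so Proposition 5(a) transfers), replace the orthonormal exponentials by the normalized system $\{\parallel e^{itx}\parallel^{-1}e^{i(2\pi k+t)x}\}$ together with the uniform formulas (17), (19) so that Lemma 2 transfers, and then repeat the proof of Theorem 3 on $\gamma(0,h)$ and $\gamma(\pi,h)$. Your added care about the Riesz-basis (non-orthonormality) constants being uniform on the compact arcs is a reasonable fleshing-out of what the paper leaves implicit.
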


Now to prove the expansion theorem we try to replace $\gamma(0,h)$ and
$\gamma(\pi,h)$ by $[-h,h]$ and $[\pi-h,\pi+h]$ in the right hand sides of
(106) and (107) respectively.

\begin{theorem}
Let $f$ be continuous and compactly supported function, $0<h<\frac{1}{15\pi}$
and (76) holds. Then the following equalities hold
\begin{equation}
\int\limits_{\gamma(0,h)}f_{t}(x)dt=\int\limits_{[-h,h]}\left(  \sum
_{\left\vert n\right\vert \leq N(h)}a_{n}(t)\Psi_{n,t}(x)\right)  dt+
\end{equation}%
\[
\sum_{n>N(h)}\int\limits_{[-h,h]}\left(  a_{n}(t)\Psi_{n,t}(x)+a_{-n}%
(t)\Psi_{-n,t}(x)\right)  dt,
\]%
\begin{equation}
\int\limits_{\lbrack-h,h]}\left(  \sum_{\left\vert n\right\vert \leq N_{h}%
(0)}a_{n}(t)\Psi_{n,t}(x)\right)  dt=\lim_{\delta\rightarrow0}\left(
\sum_{\left\vert n\right\vert \leq N_{h}(0)}\int\limits_{\delta<\left\vert
t\right\vert \leq h}a_{n}(t)\Psi_{n,t}(x)dt\right)  ,
\end{equation}%
\begin{equation}
\int\limits_{\lbrack-h,h]}\left(  a_{n}(t)\Psi_{n,t}+a_{-n}(t)\Psi
_{-n,t}\right)  dt=\lim_{\delta\rightarrow0}\left(  \int\limits_{\delta
<\left\vert t\right\vert \leq h}a_{n}(t)\Psi_{n,t}dt+\int\limits_{\delta
<\left\vert t\right\vert \leq h}a_{-n}(t)\Psi_{-n,t}dt\right)  ,
\end{equation}
where $N(h)$ is defined in introduction (see \textbf{(a)}). Moreover, if
$\lambda_{n}(0),$ where $n>N(h),$ is not an ESS then
\begin{equation}
\int\limits_{\lbrack-h,h]}\left(  a_{n}(t)\Psi_{n,t}+a_{-n}(t)\Psi
_{-n,t}\right)  dt=\int\limits_{[-h,h]}a_{n}(t)\Psi_{n,t}dt+\int
\limits_{[-h,h]}a_{-n}(t)\Psi_{-n,t}dt.
\end{equation}
The series in (108) converges in the norm of $L_{2}(a,b)$ for every
$a,b\in\mathbb{R}.$
\end{theorem}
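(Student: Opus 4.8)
The plan is to establish the four claimed identities in the natural logical order, converting the term-by-term integration over the semicircle $\gamma(0,h)$ (already known from Theorem 4) into an integration over the real segment $[-h,h]$, while being careful that the individual summands $a_n(t)\Psi_{n,t}(x)$ may fail to be integrable near $0$ for the finitely many indices where $\lambda_n(0)$ is an ESS. First I would prove (108): start from (106) of Theorem 4, which already gives $\int_{\gamma(0,h)}f_t(x)\,dt=\sum_{k\in\mathbb{Z}}\int_{\gamma(0,h)}a_k(t)\Psi_{k,t}(x)\,dt$. For the indices $k$ with $|k|>N(h)$ the two eigenvalues $\lambda_n(t),\lambda_{-n}(t)$ are paired by (11), and the curves $\Gamma_n,\Gamma_{-n}$ are joined at $\lambda_n(0)$; deforming the contour $\gamma(0,h)$ to the segment $[-h,h]$ (Cauchy's theorem applied to the analytic-in-$t$ integrand $f_t$ away from the branch point, exactly as in the passage from (39)–(41) to (43)) and using $\lambda_n(-t)=\lambda_n(t)$ to identify the two sides of the cut, one replaces $\int_{\gamma(0,h)}(a_n\Psi_{n,t}+a_{-n}\Psi_{-n,t})\,dt$ by $\int_{[-h,h]}(a_n\Psi_{n,t}+a_{-n}\Psi_{-n,t})\,dt$. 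The combined integrand $a_n(t)\Psi_{n,t}(x)+a_{-n}(t)\Psi_{-n,t}(x)$ is the one that behaves well (the potential ESS singularity at $t=0$ cancels between the two components because the projection onto the joined pair is regular, cf. the cancellation in Lemma 1 and Proposition 3(b)); this is why the two terms must be grouped. For the finitely many $|k|\le N(h)$ one keeps them as the finite sum $\sum_{|n|\le N(h)}a_n(t)\Psi_{n,t}$ written under one integral sign, since individually they need not be integrable.

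Next I would prove (109) and (110), which are simply the statement that the improper (principal-value-type) integral $\int_{[-h,h]}$ of the grouped integrand equals $\lim_{\delta\to0}\int_{\delta<|t|\le h}$. For (109): the finite sum $g(t)=\sum_{|n|\le N_h(0)}a_n(t)\Psi_{n,t}(x)$ arises as a piece of $f_t(x)-\sum_{|n|>N_h(0)}a_n(t)\Psi_{n,t}(x)$; since $f_t$ is analytic hence bounded near $t=0$ and the tail $R_{N_h(0)}(\cdot,t)$ is controlled in $L_2$-norm uniformly by Lemma 2, $g(t)$ is in $L_2(a,b)$ on $[-h,h]$ and therefore its integral over $[-h,h]$ is the limit of the integrals over $\delta<|t|\le h$ by dominated/monotone convergence of the truncations — this is just the definition of the Lebesgue integral of an $L_1$ function as a limit of integrals over an increasing family of sets. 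Equation (110) is the same statement for the single grouped pair $a_n\Psi_{n,t}+a_{-n}\Psi_{-n,t}$, using that this combination is integrable on $[-h,h]$ (as noted above, by the ESS-cancellation for the joined components), so again its integral is the $\delta\to0$ limit. Finally (112) is the additivity of the integral: when $\lambda_n(0)$ is not an ESS, Definition 3 says $\tfrac1{\alpha_n}$ is integrable on a neighborhood of $0$, hence by (87) each of $a_n(t)\Psi_{n,t}(x)$ and $a_{-n}(t)\Psi_{-n,t}(x)$ is separately integrable on $[-h,h]$, and one may split the integral of the sum into the sum of integrals.

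For the $L_2(a,b)$-convergence of the series in (108): this is inherited from the corresponding convergence in Theorem 4 via the contour deformation, together with the tail estimate. Concretely, by (41) and $R_n(x+1,t)=e^{it}R_n(x,t)$ one reduces to $L_2(-m,m)$; the tail of the series in (108) beyond index $n$ is $\int_{[-h,h]}R_n(x,t)\,dt$ (plus the analogous piece from the negative indices), and by the obvious inequality $\|\int_E f(t)\,dt\|^2\le 2\pi\int_E\|f(t)\|^2\,dt$ together with (90)–(101) of Lemma 2 and the uniform convergence to zero of $\sum_{k>n}|(f_t,e^{i(2\pi k+t)x})|^2$ on $[-\pi,\pi]$, this tail tends to zero in $L_2(-m,m)$, exactly as in the proof of Theorem 3.

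The main obstacle I expect is the justification of the contour deformation in (108) at the branch point $t=0$: the integrand $f_t(x)$ is analytic, but the eigenfunctions $\Psi_{n,t}$ and the curve $\Gamma_n\cup\Gamma_{-n}$ have a square-root-type branching there (cf. (61)), so one must argue that the grouped quantity $a_n(t)\Psi_{n,t}+a_{-n}(t)\Psi_{-n,t}$ extends across $t=0$ in a way that makes Cauchy's theorem applicable — equivalently, that the singular contributions from the two joined sheets cancel. This is precisely where Lemma 1(b), Proposition 3 and the ESS analysis of Section 2 are needed, and where the hypothesis $h<\tfrac1{15\pi}$ (guaranteeing via (11) that exactly two eigenvalues are paired and that (76) can be arranged) is used. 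Everything else is routine measure theory and the already-proven Theorems 3 and 4.
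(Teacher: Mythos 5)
Your overall architecture (start from (106), deform $\gamma(0,h)$ to $[-h,h]$, keep the pairs $a_n\Psi_{n,t}+a_{-n}\Psi_{-n,t}$ and the finite block grouped, then treat (109)--(111) as statements about improper integrals) matches the paper, but the central analytic step is missing, and you yourself flag it as "the main obstacle" without supplying the idea that resolves it. The claim you need -- that the grouped quantities $a_n(t)\Psi_{n,t}(x)+a_{-n}(t)\Psi_{-n,t}(x)$ and $S_N(x,t)=\sum_{|n|\le N(h)}a_n(t)\Psi_{n,t}(x)$ extend to analytic, uniformly bounded functions of $t$ on the whole complex disk $U(h)=\{|t|\le h\}$, so that Cauchy's theorem moves the integral from $\gamma(0,h)$ to $[-h,h]$ and so that the $\delta\to0$ limits in (109)--(110) exist -- cannot be extracted from the sources you cite. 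Lemma 1(b) only gives continuity of $\Psi_{n,t}$ and $\alpha_n$ at $0$ when the geometric multiplicity is $1$ (and in the ESS case $\alpha_n(0)=0$, so the individual terms still blow up), and Proposition 3(b) only gives the order of the singularity of each $\alpha_k$, not a cancellation in the sum. The paper's actual mechanism is to represent the grouped sums as total Riesz projections: $T_n(x,t)=\int_{C(n)}\int_0^1 G(x,\xi,\lambda,t)f_t(\xi)\,d\xi\,d\lambda$ over the fixed circle $C(n)=\{|z-(2\pi n)^2|=2n\}$, which by (11) encloses exactly the pair $\lambda_{\pm n}(t)$ for all $|t|\le h$ and lies in the resolvent set; since $\Delta(\lambda,t)$ is continuous and nonvanishing on the compact $C(n)\times U(h)$, the Green-function formulas (114)--(116) give analyticity and a uniform bound (118) for $T_n(x,t)$ on $U(h)$, whence (119), the identification (120) with $a_n\Psi_{n,t}+a_{-n}\Psi_{-n,t}$ off the finite set $A_n\cup A_{-n}$, and the limit statements. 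The finite block is handled the same way with a curve $\Gamma(0)$ enclosing the remaining eigenvalues. Without some equivalent of this resolvent/contour representation, your "the singular contributions from the two joined sheets cancel" is an assertion, not a proof.

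A second, related flaw: your argument for (109) writes $S_N(x,t)=f_t(x)-\sum_{|n|>N(h)}a_n(t)\Psi_{n,t}(x)$ and controls the tail by Lemma 2, but Lemma 2 (and the asymptotics (88)--(89) it rests on) is proved only for $t\in E(h)$, i.e.\ bounded away from $0$ and $\pi$; near $t=0$ the individual tail terms with ESS indices are not even locally integrable, so this decomposition does not show $S_N$ is bounded or integrable on $[-h,h]$. In the paper the boundedness of $S_N$ on $U(h)$ comes again from the contour-integral representation ((123)--(124)), not from tail estimates. Your treatment of the final equality (integrability of each term separately when $\lambda_n(0)$ is not an ESS, via Definition 3, (87) and Proposition 5(b)) and of the $L_2(a,b)$ convergence is consistent with the paper, but those parts do not repair the missing projection argument.
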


\begin{proof}
It follows from (11) that for $n>N(h)$ the circle

$C(n)=\left\{  z\in\mathbb{C}:\left\vert z-(2n\pi)^{2}\right\vert =2n\right\}
$ contains inside only two eigenvalues (counting multiplicities) denoted by
$\lambda_{n}(t)$ and $\lambda_{-n}(t)$ of the operators $L_{t}$ for $|t|\leq
h$. Moreover, $C(n)$ lies in the resolvent set of $L_{t}$ for $|t|\leq h.$
Consider the total projections
\begin{equation}
T_{n}(x,t)=\int_{C(n)}A(x,\lambda,t)d\lambda,
\end{equation}
where
\begin{equation}
A(x,\lambda,t)=\int\limits_{0}^{1}G(x,\xi,\lambda,t)f_{t}(\xi)d\xi
\end{equation}
and $G(x,\xi,\lambda,t)$ is the Green function of the operator $L_{t}.$ It is
well-known that the Green function $G(x,\xi,\lambda,t)$ of $L_{t}$ is defined
by formulas (see [10] pages 36 and 37)%
\begin{equation}
G(x,\xi,\lambda,t)=\frac{H(x,\xi,\lambda,t)}{\Delta(\lambda,t)},
\end{equation}
where
\begin{equation}
H(x,\xi,\lambda,t)=\left\vert
\begin{array}
[c]{ccc}%
\theta(x,\lambda) & \varphi(x,\lambda) & g(x,\xi)\\
\theta-e^{it} & \varphi & g(1,\xi)-e^{it}g(0,\xi)\\
\theta^{\prime} & \varphi^{\prime}-e^{it} & g^{^{\prime}}(1,\xi)-e^{it}%
g^{^{\prime}}(0,\xi)
\end{array}
\right\vert ,
\end{equation}%
\begin{equation}
g(x,\xi)=\pm\frac{1}{2}%
\begin{vmatrix}
\theta(x,\lambda) & \varphi(x,\lambda)\\
\theta(\xi,\lambda) & \varphi(\xi,\lambda)
\end{vmatrix}
\end{equation}
and $\Delta(\lambda,t)$ is defined in (3). In (116) the positive sign being
taken if $x>\xi,$ and the negative sign if $x<\xi.$

Since $\Delta(\lambda,t)$ is continuous in the compact $C(n)\times U(h),$
where $U(h)=\left\{  t\in\mathbb{C}:\left\vert t\right\vert \leq h\right\}  $,
there exists a positive constant $c_{7}$ such that
\begin{equation}
\left\vert \Delta(\lambda,t)\right\vert \geq c_{7},\text{ }\forall
(\lambda,t)\in C(n)\times U(h).
\end{equation}
Therefore using (112)-(116) and taking into account that $f_{t}(x)$ is the sum
of finite number of summands (see (40)), we obtain that for any $x\in
\lbrack0,1]$ the function $T_{n}(x,t)$ is analytic in $U(h)$ and there exist
$c_{8}$ such that
\begin{equation}
\left\vert T_{n}(x,t)\right\vert \leq c_{8}%
\end{equation}
for all $(x,t)\in\lbrack0,1]\times U(h)$. It implies that
\begin{equation}
\int\limits_{\gamma(0,h)}T_{n}(x,t)dt=\int\limits_{[-h,h]}T_{n}(x,t)dt.
\end{equation}
On the other hand, inside of the circle $C(n)$ the operator $L_{t}$ for $t\in
U(h)\backslash(A_{n}\cup A_{-n})$ has $2$ simple eigenvalues $\lambda_{n}(t)$
and $\lambda_{-n}(t),$ where $A_{n}\cup A_{-n}$ is a finite set (see (15)).
Therefore
\begin{equation}
T_{n}(x,t)=a_{n}(t)\Psi_{n,t}+a_{-n}(t)\Psi_{-n,t},\text{ }\forall t\in
U(h)\backslash(A_{n}\cup A_{-n}).
\end{equation}
Besides, by (76), for $t\in\gamma(0,h)$ the eigenvalues are simple and hence
$(f_{t},X_{k,t})\Psi_{k,t}(x)$ is continuous function on $\gamma(0,h)$ for
each $x$ which implies that
\begin{equation}
\int\limits_{\gamma(0,h)}a_{n}(t)\Psi_{n,t}+a_{-n}(t)\Psi_{-n,t}%
dt=\int\limits_{\gamma(0,h)}a_{n}(t)\Psi_{n,t}dt+\int\limits_{\gamma
(0,h)}a_{-n}(t)\Psi_{-n,t}dt.
\end{equation}
Thus it follows from (119)-(121) that%
\begin{equation}
\int\limits_{\gamma(0,h)}a_{n}(t)\Psi_{n,t}dt+\int\limits_{\gamma(0,h)}%
a_{-n}(t)\Psi_{-n,t}dt=\int\limits_{[-h,h]}\left(  a_{n}(t)\Psi_{n,t}%
+a_{-n}(t)\Psi_{-n,t}\right)  dt.
\end{equation}
Now one can readily see that the equalities (110) and (111) follows from
(118), (120) and Proposition 5(b) respectively.

It is clear that there exists a closed curve $\Gamma(0)$ such that the curve
$\Gamma(0)$ lies in the resolvent set of the operator $L_{t}$ for $|t|\leq h$
and all eigenvalues of $L_{t}$ for $|t|\leq h$ that do not lie in $C(n)$ for
$n>N(h)$ belong to the set enclosed by $\Gamma(0).$ Therefore instead of
$C(n)$ using $\Gamma(0)$ and repeating the above arguments we obtain that the
function
\begin{equation}
S_{N}(x,t)=:\sum_{\left\vert n\right\vert \leq N(h)}a_{n}(t)\Psi_{n,t}(x)
\end{equation}
is analytic in $U(h)$ and there exist $c_{9}$ such that
\begin{equation}
\left\vert S_{N}(x,t)\right\vert \leq c_{9}%
\end{equation}
for all $(x,t)\in\lbrack0,1]\times U(h)$ and
\[
\int\limits_{\gamma(0,h)}\sum_{\left\vert n\right\vert \leq N_{h}(0)}%
a_{n}(t)\Psi_{n,t}(x)=\int\limits_{[-h,h]}\left(  \sum_{\left\vert
n\right\vert \leq N_{h}(0)}a_{n}(t)\Psi_{n,t}(x)\right)
\]
Now using (106) and taking into account that the integrals of $S_{N}(x,t)$
over $[-\delta,\delta]$ tend to zero as $\delta\rightarrow0$ (see (124)) we
get the proof of the theorem.
\end{proof}

In the same way we obtain.

\begin{theorem}
Let $f$ be continuous and compactly supported function, $0<h<\frac{1}{15\pi}$
and (76) holds. Then the following equalities hold
\begin{equation}
\int\limits_{\gamma(\pi,h)}f_{t}(x)dt=\int\limits_{[\pi-h,\pi+h]}%
\sum_{n=-N(h)-1}^{N(h)}a_{n}(t)\Psi_{n,t}(x)dt+
\end{equation}%
\[
\sum_{n>N(h)}\int\limits_{[\pi-h,\pi+h]}\left(  a_{n}(t)\Psi_{n,t}%
(x)+a_{-(n+1)}(t)\Psi_{-(n+1),t}(x)\right)  dt,
\]%
\[
\int\limits_{\lbrack\pi-h,\pi+h]}\left(  \sum_{n=-N(h)-1}^{N(h)}a_{n}%
(t)\Psi_{n,t}(x)\right)  dt=\lim_{\delta\rightarrow0}\left(  \sum
_{n=-N(h)-1}^{N(h)}\int\limits_{\delta<\left\vert \pi-t\right\vert \leq
h}a_{n}(t)\Psi_{n,t}(x)dt\right)  ,
\]%
\[
\int\limits_{\lbrack\pi-h,\pi+h]}\sum_{k=n,-(n+1)}a_{k}(t)\Psi_{k,t}%
dt=\lim_{\delta\rightarrow0}\left(  \sum_{k=n,-(n+1)}\int\limits_{\delta
<\left\vert \pi-t\right\vert \leq h}a_{k}(t)\Psi_{k,t}dt\right)  .
\]
Moreover, if $\lambda_{n}(\pi)$ is not an ESS then
\[
\int\limits_{\lbrack\pi-h,\pi+h]}\sum_{k=n,-(n+1)}a_{k}(t)\Psi_{k,t}%
dt=\sum_{k=n,-(n+1)}\int\limits_{[\pi-h,\pi+h]}a_{k}(t)\Psi_{k,t}dt.
\]
The series in (125) converge in the norm of $L_{2}(a,b)$ for every
$a,b\in\mathbb{R}.$
\end{theorem}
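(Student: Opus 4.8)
The plan is to repeat the proof of Theorem~5 almost verbatim, replacing the point $0$ by $\pi$ and the pairing $n\leftrightarrow -n$ by $n\leftrightarrow -(n+1)$, as dictated by (12). First I would argue, exactly as in the proof of Theorem~5: by (12) and the smallness of $h$ (recall $h<\tfrac{1}{15\pi}$), for $n>N(h)$ and every $t$ in the closed disk $U_{\pi}(h)=\{t\in\mathbb{C}:|t-\pi|\le h\}$ the circle $C(n)=\{z\in\mathbb{C}:|z-((2n+1)\pi)^{2}|=2n\}$ lies in $\rho(L_{t})$ and encloses exactly the two eigenvalues $\lambda_{n}(t)$ and $\lambda_{-(n+1)}(t)$ of $L_{t}$, counting multiplicity. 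I would then form the total projection $T_{n}(x,t)=\int_{C(n)}A(x,\lambda,t)\,d\lambda$ with $A$ as in (113), insert the Green's function formulas (114)--(116), and observe that $\Delta(\lambda,t)$ is continuous and, by (117), bounded away from zero on the compact set $C(n)\times U_{\pi}(h)$. Since $f_{t}(x)$ is a finite sum of translates of $f$ (see (40)), for each fixed $x$ the map $t\mapsto T_{n}(x,t)$ is then analytic on $U_{\pi}(h)$ and uniformly bounded there; hence, by Cauchy's theorem, $\int_{\gamma(\pi,h)}T_{n}(x,t)\,dt=\int_{[\pi-h,\pi+h]}T_{n}(x,t)\,dt$.

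Next I would identify $T_{n}$ with the paired sum. Off the finite set $(A_{n}\cup A_{-(n+1)})\cap U_{\pi}(h)$ the two eigenvalues inside $C(n)$ are simple, so $T_{n}(x,t)=a_{n}(t)\Psi_{n,t}(x)+a_{-(n+1)}(t)\Psi_{-(n+1),t}(x)$; and by (76) the eigenvalues are simple on $\gamma(\pi,h)$, so there each of $a_{n}(t)\Psi_{n,t}(x)$ and $a_{-(n+1)}(t)\Psi_{-(n+1),t}(x)$ is continuous and the integral over $\gamma(\pi,h)$ is additive. Combining this with the Cauchy identity above shows that the integral of $a_{n}\Psi_{n,t}+a_{-(n+1)}\Psi_{-(n+1),t}$ over $\gamma(\pi,h)$ equals its integral over $[\pi-h,\pi+h]$. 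For the $\delta\to0$ relations, Lemma~1 gives that each $a_{k}(t)\Psi_{k,t}(x)$ is piecewise continuous on $(0,\pi)\cup(\pi,2\pi)$, so the integral over $[\pi-h,\pi+h]$ is the limit as $\delta\to0$ of the integral over $\delta<|\pi-t|\le h$. Finally, re-running Proposition~5(b) with $\gamma(\pi,h)$ in place of $(\pi-\delta,\pi+\delta)$ shows that if $\lambda_{n}(\pi)$ is not an ESS then $a_{n}(t)\Psi_{n,t}(x)$ and $a_{-(n+1)}(t)\Psi_{-(n+1),t}(x)$ are separately integrable on $[\pi-h,\pi+h]$, which yields the last displayed splitting.

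For the finite block $-N(h)-1\le n\le N(h)$ I would choose, exactly as in Theorem~5, a closed contour $\Gamma(\pi)$ lying in $\rho(L_{t})$ for all $|t-\pi|\le h$ and enclosing every eigenvalue of these $L_{t}$ that does not lie inside some $C(n)$ with $n>N(h)$; the associated total projection is $S_{N}(x,t)=\sum_{n=-N(h)-1}^{N(h)}a_{n}(t)\Psi_{n,t}(x)$, which is analytic and bounded on $U_{\pi}(h)$, so $\int_{\gamma(\pi,h)}S_{N}\,dt=\int_{[\pi-h,\pi+h]}S_{N}\,dt$ while the integral of $S_{N}$ over $[\pi-h,\pi-\delta]\cup[\pi+\delta,\pi+h]$ tends to that over $[\pi-h,\pi+h]$ as $\delta\to0$. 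Assembling the finite block together with the series of paired blocks coming from (107) of Theorem~4, and noting that the $L_{2}(a,b)$-convergence of the resulting series is inherited from that of (107), completes the proof. The only genuinely non-routine point is bookkeeping: one must verify that (12) really localizes the pair $\{\lambda_{n}(t),\lambda_{-(n+1)}(t)\}$ inside $C(n)$ for the stated ranges of $t$ and $n$, and that the index shift $-n\mapsto-(n+1)$ (hence also the shifted finite block $-N(h)-1\le n\le N(h)$) is threaded consistently through the definitions of $a_{k}(t)$, $\Psi_{k,t}$ and the exceptional sets $A_{n}$; everything else is the argument of Theorem~5 word for word.
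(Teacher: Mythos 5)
Your proposal is correct and is essentially the paper's own argument: the paper proves Theorem 6 by simply invoking the proof of Theorem 5 ("In the same way we obtain"), i.e. the total-projection/Cauchy argument with $0$ replaced by $\pi$ and the pairing $n\leftrightarrow-n$ replaced by $n\leftrightarrow-(n+1)$ via (12), which is exactly what you carry out, including the finite block handled by a contour $\Gamma(\pi)$ and the non-ESS splitting via the analogue of Proposition 5(b).
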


Thus by (80) and theorems 3, 5, 6 we have the following spectral expansion theorem

\begin{theorem}
For each continuous and compactly supported function $f$ the spectral
expansion given by the equalities (80) and (100), (108), (125) holds.
\end{theorem}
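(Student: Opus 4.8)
The statement to prove is Theorem~7, which asserts that the spectral expansion is given by combining the three ingredients already assembled in the preceding theorems. My plan is to simply chain together the decomposition of $f$ coming from Gelfand's lemma with the three term-by-term integration results, and then identify the emerging pieces as the required spectral expansion. Concretely, I would start from equation (80), which already expresses $f$ as the sum of an integral over $E(h)$ and integrals over the two semicircles $\gamma(0,h)$ and $\gamma(\pi,h)$ of the Gelfand transform $f_t$. This equation is valid for every continuous, compactly supported $f$, since it was derived from (47) by the Cauchy-theorem argument in the introduction together with the observation that the integral over $B(h)$ equals the integral over $E(h)$ (the difference being a finite set of points).

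The first step is to apply Theorem~3 to rewrite $\int_{E(h)} f_t(x)\,dt$ as the term-by-term sum $\sum_{k\in\mathbb{Z}}\int_{E(h)} a_k(t)\Psi_{k,t}(x)\,dt$, with convergence in $L_2(a,b)$ for every $a,b$. The second step is to apply Theorem~5 to $\int_{\gamma(0,h)} f_t(x)\,dt$: this yields equation (108), expressing the integral as a finite sum of integrals of $a_n(t)\Psi_{n,t}$ over $[-h,h]$ plus a convergent series of paired integrals over $[-h,h]$, where moreover the finite-sum piece and the paired-integral pieces are understood as the principal-value-type limits in (109)-(111), and the pairing can be split further via (112) precisely when $\lambda_n(0)$ is not an ESS. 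The third step is the analogous application of Theorem~6 to $\int_{\gamma(\pi,h)} f_t(x)\,dt$, producing (125) and its accompanying limit formulas. Substituting these three expansions back into (80) gives exactly the claimed spectral expansion: $f$ is written as $\tfrac{1}{2\pi}$ times the sum of (i) the term-by-term series over $E(h)$, (ii) the $\gamma(0,h)$-contribution written via (108), and (iii) the $\gamma(\pi,h)$-contribution written via (125).

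There is no real obstacle here — Theorem~7 is a bookkeeping corollary of the three technical theorems. The only thing requiring a sentence of care is the convergence bookkeeping: I should note that all three series converge in the norm of $L_2(a,b)$ for every finite interval $(a,b)$ (this is asserted in each of Theorems~3, 5, 6), so their sum does too, and hence the combined expansion converges in $L_2(a,b)$; and that the integrals over $E(h)$, $[-h,h]$, and $[\pi-h,\pi+h]$ indeed exist in the appropriate (possibly improper / principal-value) sense by Proposition~5 and the limit relations (109)-(111) and their $\pi$-analogues. The genuinely substantive point — namely why one cannot in general dispense with the improper integrals and the parentheses (the pairing of $\lambda_n(0)$ with $\lambda_{-n}(0)$, and of $\lambda_n(\pi)$ with $\lambda_{-(n+1)}(\pi)$) when ESS are present — is exactly the content of the ESS analysis in Section~2 and is flagged for discussion in Conclusion~1; it does not need to be re-proved in Theorem~7. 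So the proof is essentially: "Substitute (100), (108) and (125) into (80)," with a remark on convergence.
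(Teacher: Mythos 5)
Your proposal is correct and matches the paper exactly: the paper gives no separate argument for this theorem beyond the phrase ``Thus by (80) and theorems 3, 5, 6,'' i.e.\ it is precisely the bookkeeping substitution of (100), (108), (125) into (80) that you describe, with the $L_{2}(a,b)$ convergence inherited from those theorems. The only blemish is a reference slip: the splitting of the paired integral when $\lambda_{n}(0)$ is not an ESS is equality (111), not (112).
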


In the Conclusion 1 we discuss in detail the necessity of the parenthesis (the
handling of the terms $a_{n}(t)\Psi_{n,t}(x)$ and $a_{-n}(t)\Psi_{-n,t}(x))$
in the second row of \ (108) and the convergence of the series with
parenthesis. Now in the following remark we discuss the parenthesis in the
first row of (108).

\begin{remark}
\textbf{ On the parenthesis in (108). }We say that the set%
\begin{equation}
\left\{  a_{k}(t)\Psi_{k,t}(x):k\in\mathbb{T}(\Lambda)\right\}  ,
\end{equation}
where $\mathbb{T}(\Lambda)$ is defined in (59), is a bundle corresponding to
the multiple eigenvalue $\Lambda.$ If $\Lambda$ is not ESS of the operator $L$
then it follows \ from Definition 3 and Remark 1 that all elements
$a_{k}(t)\Psi_{k,t}(x)$ of the bundle (126) are integrable functions on
$[-\varepsilon,0)\cup(0,\varepsilon]$ for all $x$ and for some $\varepsilon.$
If $\Lambda$ is an ESS of the operator $L$ then for some values of
$k\in\mathbb{T}(\Lambda)$ the function $a_{k}(t)\Psi_{k,t}(x)$ for almost all
$x$ is nonintegrable on $[-\varepsilon,0)\cup(0,\varepsilon],$ while some of
elements of the bundle (126) may be integrable. Instead of $C(n)$ using a
small circle enclosing $\Lambda$ and repeating the proof of (118) we see that
the total sum of elements of (126) is bounded due to the cancellations of the
nonintegrable terms of (126). At least two element of the bundle must be
nonintegrable in order to do the cancellations. In fact, we may and must to
huddle together only the nonintegrable elements of the bundle (126). In case
$\Lambda=\lambda_{n}(0)$ and $n\gg1$ the bundle (126) consist of $a_{n}%
(t)\Psi_{n,t}$ and $a_{-n}(t)\Psi_{-n,t}$ and both of then are nonintegrable.
That is why we must to handle they together.

Let $\lambda_{n_{j}}(0)$ for $j=1,2,...,s$ be ESS, where $\left\vert
n_{j}\right\vert \leq N(h).$ Then the set $\left\{  n\in\mathbb{Z}:\text{
}\left\vert n\right\vert \leq N(h)\right\}  $ can be divided into subsets
$\mathbb{T}(\lambda_{n_{j}}(0))$ for $j=1,2,...,s$\ and
\[
\mathbb{K}=\left\{  n\in\mathbb{Z}:\text{ }\left\vert n\right\vert \leq
N(h)\right\}  \backslash\bigcup\limits_{j=1,2,...,s}\mathbb{T}(\lambda_{n_{j}%
}(0)).
\]
Therefore the summations over $\left\{  n\in\mathbb{Z}:\text{ }\left\vert
n\right\vert \leq N(h)\right\}  $ in (108) and (109) can be written as the sum
of summations over $\mathbb{T}(\lambda_{n_{1}}(0)),\mathbb{T}(\lambda_{n_{2}%
}(0)),...,\mathbb{T}(\lambda_{n_{s}}(0))$ and $\mathbb{K}.$ In Theorem 5 to
avoid the complicated notations the summations over $\left\{  n\in
\mathbb{Z}:\text{ }\left\vert n\right\vert \leq N(h)\right\}  $ is taken. We
have the same situation with Theorem 6.
\end{remark}

Now to write the spectral expansion theorems in a compact form we introduce
some notations and definition. For this we parameterize the Bloch eigenvalues
$\lambda_{n}(t)$ and Bloch functions $\Psi_{n,t}(x)$ by quasimomentum $t$
changing in all $\mathbb{R}.$

\begin{notation}
Define $\lambda:\mathbb{R}\rightarrow\mathbb{C}$ by $\lambda(t)=\lambda
_{n}(t-2\pi n)$ for $t\in(2\pi n-h,2\pi(n+1)-h],$ where $n\in\mathbb{Z}.$
Similarly, let $\Psi(x,t)$ and $\Psi^{\ast}(x,t)$, denotes respectively
$\Psi_{n,t-2\pi n}(x)$ and $\Psi_{n,t-2\pi n}^{\ast}(x)$ if $t\in(2\pi
n-h,2\pi(n+1)-h].$ Let $\alpha(t)=(\Psi(\cdot,t),\Psi^{\ast}(\cdot,t))$ and
$a(t)=(f,\Psi(\cdot,t))_{\mathbb{R}}$
\end{notation}

\begin{definition}
A quasimomentum $t$ is said to be singular quasimomentum if $\lambda(t)$ is
ESS. By Theorem 1 the set of singular quasimomenta is the subset of $\left\{
\pi n:n\in\mathbb{Z}\right\}  $. Therefore the definition of the singular
quasimomenta can also be given as follows: $\pi n$ is called a singular
quasimomentum if $\lambda(\pi n)$ is ESS.
\end{definition}

Let $\Lambda=\lambda_{n}(0)$ be ESS. It means that:

Case 1. If $\left\vert n\right\vert >N(h)$ then $\lambda_{n}(0)=\lambda
_{-n}(0).$

Case 2. If $\left\vert n\right\vert \leq N(h)$ then $\lambda_{j}(0)=\Lambda$
for all $j\in\mathbb{T}(\Lambda).$

Then in Case 1 the quasumomenta $\pm2\pi n,$ and in Case 2 the quasimomenta
$2\pi j$ for $j\in\mathbb{T}(\Lambda)$ are the singular quasimomenta
corresponding to the ESS $\lambda_{n}(0)$. In the same way we define the
singular quasimomenta corresponding to the ESS $\lambda_{n}(\pi)$.

As we noted in Remark 2, if $\lambda_{n}(0)$ for $\left\vert n\right\vert
>N(h)$ is ESS then both $a_{n}(t)\Psi_{n,t}$ and $a_{-n}(t)\Psi_{-n,t}$ are
nonintegrable in neighborhoods of $0$ and we must to handle they together. In
the language of Notation 1, it means that if $\lambda(2\pi n)$ for $\left\vert
n\right\vert >N(h)$ is ESS then $a(t)\Psi(x,t)$ is nonintegrable in the
neighborhoods of the singular quasimomenta $2\pi n$ and $-2\pi n$
corresponding to the ESS $\lambda(2\pi n).$ That is why, the handling
$a_{n}(t)\Psi_{n,t}$ and $a_{-n}(t)\Psi_{-n,t}$ in (108) now corresponds to
the handling of the neighborhoods of $2\pi n$ and $-2\pi n$ together.
Therefore we divide the set $\mathbb{R}$ of quasimomenta $t$ into two parts:
the set of neighborhoods of singular quasimomenta and the other part of
$\mathbb{R}$. Similarly, we divide the spectrum $\sigma(L)$ into two part: the
set of neighborhood of ESS and the other part of $\sigma(L).$ For this
introduce the notations.

\begin{notation}
Let $\left\{  \pi n_{j}:\text{ }j=1,2,...,\right\}  $ be the set of the
singular quasimomenta. By Definition 4, $\lambda(\pi n_{j})$ is an ESS and
\[
\mathbb{E=}\left\{  \lambda(\pi n_{j}):j=1,2,...,\right\}  ,
\]
where $\mathbb{E}$ is the set of ESS. For $\left\vert n_{j}\right\vert >N(h)$
define $B_{j}(h)$ and $B_{j}(h,\delta)$ by
\[
B_{j}(h)=(\pi n_{j}-h,\pi n_{j}+h)\cup(-\pi n_{j}-h,-\pi n_{j}+h),\text{
}B_{j}(h,\delta)=B_{j}(h)\backslash B_{j}(\delta),
\]
\ where $0<h<\frac{1}{15\pi}$ and $0<\delta<h.$ For $\left\vert n_{j}%
\right\vert \leq N(h)$ define $B_{j}(h)$ and $B_{j}(h,\delta)$ by
\[
B_{j}(h)=\cup_{n\in\mathbb{T}_{j}}(\pi n-h,\pi n+h),\text{ }B_{j}%
(h,\delta)=B_{j}(h)\backslash B_{j}(h,\delta),
\]
where $\mathbb{T}_{j}=:\mathbb{T}(\lambda(\pi n_{j}))$ and $\mathbb{T}%
(\Lambda)$ is defined in (59). The set $\lambda(B_{j}(h))$ is the part of the
spectrum $\sigma(L)$ of $L$ lying in the neighborhood of the ESS $\lambda(\pi
n_{j}),$ where $\lambda(C)=:\left\{  \lambda(t):t\in C\right\}  $ for
$C\in\mathbb{R}.$ Finally let
\[
B(h)=\cup_{j}B_{j}(h).
\]

\end{notation}

Using this notation and theorems 3, 5 and 6 we obtain

\begin{theorem}
For each continuous and compactly supported function $f$ the following
expansion holds
\begin{equation}
f(x)=\frac{1}{2\pi}\int\limits_{\mathbb{R}\backslash B(h)}a(\lambda
(t))\Psi(x,\lambda(t))dt+\frac{1}{2\pi}\sum\limits_{j}p.v.\int\limits_{B_{j}%
(h)}a(\lambda(t))\Psi(x,\lambda(t))dt
\end{equation}
where the $p.v.$ integral over $B_{j}(h)$ is the limit as $\delta\rightarrow0$
of the integral over $B_{j}(h,\delta).$ The first integral and the series in
(127) converge in the norm of $L_{2}(a,b)$ for every $a,b\in\mathbb{R}.$
\end{theorem}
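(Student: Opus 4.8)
The plan is to assemble Theorem 8 by stitching together the three term-by-term integration results already proved, namely Theorem 3 (integration over $E(h)$), Theorem 5 (integration over $\gamma(0,h)$ and its replacement by $[-h,h]$), and Theorem 6 (the analogous statement near $\pi$), and then to rewrite everything in the compact quasimomentum language of Notation 1 and Notation 3. The starting point is the decomposition (80), which expresses $f$ as $\frac{1}{2\pi}$ times the sum of the integral over $E(h)$, the integral over $\gamma(0,h)$, and the integral over $\gamma(\pi,h)$. First I would apply Theorem 3 to expand the $E(h)$ integral as $\sum_k \int_{E(h)} a_k(t)\Psi_{k,t}(x)\,dt$, and then Theorems 5 and 6 to expand the two semicircle integrals, pushing the contours $\gamma(0,h)$ and $\gamma(\pi,h)$ down onto the real segments $[-h,h]$ and $[\pi-h,\pi+h]$ at the price of introducing the principal-value limits $\delta\to 0$ around the singular points and grouping the nonintegrable terms $a_n(t)\Psi_{n,t}$, $a_{-n}(t)\Psi_{-n,t}$ (resp. $a_n(t)\Psi_{n,t}$, $a_{-(n+1)}(t)\Psi_{-(n+1),t}$) into bundles.

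Next I would translate indices. By Notation 1, the pair $(n,t)$ with $t$ near $0$, $\pi$, or in $[h,\pi-h]\cup[\pi+h,2\pi-h]$ corresponds to a single quasimomentum $\tau\in\mathbb{R}$ via $\lambda(\tau)=\lambda_n(\tau-2\pi n)$, $\Psi(x,\tau)=\Psi_{n,\tau-2\pi n}(x)$, $a(\tau)=(f,\Psi(\cdot,\tau))_{\mathbb{R}}=a_n(\tau-2\pi n)$. Under this identification the union of $E(h)$ with the non-ESS parts of $[-h,h]$ and $[\pi-h,\pi+h]$ — suitably shifted by multiples of $2\pi$ across all the translated copies — is exactly $\mathbb{R}\setminus B(h)$, where $B(h)=\cup_j B_j(h)$ collects the $h$-neighborhoods of the singular quasimomenta $\pi n_j$ (and their mirror points $-\pi n_j$, or the full $\mathbb{T}_j$-orbit for small $|n_j|$) as defined in Notation 3. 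The bundled, principal-value pieces around each ESS $\lambda(\pi n_j)$ become precisely $\mathrm{p.v.}\int_{B_j(h)} a(\lambda(t))\Psi(x,\lambda(t))\,dt$, with the $\mathrm{p.v.}$ understood as $\lim_{\delta\to 0}\int_{B_j(h,\delta)}$. Here one must invoke Remark 2: for $|n_j|\le N(h)$ the summation over $\{|n|\le N(h)\}$ in Theorem 5 really decomposes into the bundles $\mathbb{T}(\lambda_{n_j}(0))$ plus the "regular" set $\mathbb{K}$, so that the only terms requiring the $\mathrm{p.v.}$ grouping are genuinely those in the ESS bundles, and the cancellation of nonintegrable terms inside each bundle (established by the contour-integral boundedness argument around a small circle enclosing $\Lambda$, as in the proof of (118)) is what makes each $B_j(h)$ integral exist as a principal value.

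Finally I would address convergence: Theorems 3, 5, 6 each assert convergence of their series in $L_2(a,b)$ for every $a,b\in\mathbb{R}$, and since (127) is a finite rearrangement and regrouping of those three series (the outer sum over $j$ being, for fixed compact $(a,b)$, controlled by the tail estimate of Lemma 2 together with (101)), the first integral and the series in (127) converge in $L_2(a,b)$. The main obstacle I anticipate is purely bookkeeping rather than analytic: one must verify that the index translation $n\leftrightarrow\tau$ maps the three disjoint pieces ($E(h)$, the $[-h,h]$-business, the $[\pi-h,\pi+h]$-business, each shifted over all $n\in\mathbb{Z}$) bijectively and without overlap onto $\mathbb{R}$, that $\mathbb{R}\setminus B(h)$ is exactly the image of the union of the non-singular parts, and that the bundle groupings in Theorems 5 and 6 — expressed in the original $(n,t)$ variables with their asymmetric pairings near $0$ versus near $\pi$ — are consistent with the single uniform definition of $B_j(h)$ in Notation 3. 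Once that identification is checked, (127) is an immediate re-encoding of (80) combined with (100), (108), and (125).
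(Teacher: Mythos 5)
Your proposal is correct and follows essentially the same route as the paper, which offers no separate argument for this theorem beyond the remark that it is obtained from the decomposition (80) together with Theorems 3, 5 and 6, rewritten via the quasimomentum parameterization and the sets $B_{j}(h)$, $B_{j}(h,\delta)$ of Notations 1--2 (what you call Notation 3 is the paper's Notation 2), with Remark 2 supplying the bundle structure for the small-index ESS. Your spelled-out index translation and the convergence bookkeeping are precisely the implicit content of the paper's one-line derivation.
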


Now changing the variable to $\lambda$ in (127) as was done in (37) and using
Notation 2 we obtain the following spectral expansion.

\begin{theorem}
For each continuous and compactly supported function $f$ the following
spectral expansion holds
\begin{equation}
f(x)=\frac{1}{2\pi}\int\limits_{\sigma(L)\backslash\lambda(B(h)),}(\Phi
_{+}(x,\lambda)F_{-}(\lambda,f)+\Phi_{-}(x,\lambda)F_{+}(\lambda,f))\frac
{1}{\varphi p(\lambda)}d\lambda+
\end{equation}%
\[
\frac{1}{2\pi}\sum\limits_{j}p.v.\int\limits_{\lambda(B_{j}(h)),}(\Phi
_{+}(x,\lambda)F_{-}(\lambda,f)+\Phi_{-}(x,\lambda)F_{+}(\lambda,f))\frac
{1}{\varphi p(\lambda)}d\lambda
\]
where the $p.v.$ integral over $\lambda(B_{j}(h))$ is the limit as
$\delta\rightarrow0$ of the integral over $\lambda(B_{j}(h,\delta)),$ the
functions $\Phi_{\pm}(x,\lambda)$ and $F_{\pm}(\lambda,f)$ are defined in (34)
and (35). The first integral and the series in (128) converge in the norm of
$L_{2}(a,b)$ for every $a,b\in\mathbb{R}.$
\end{theorem}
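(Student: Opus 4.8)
The plan is to derive Theorem 10 from Theorem 9 by performing the change of variables $t \mapsto \lambda$ exactly as in the earlier computation leading to (37). First I would recall that on each spectral arc lying in $\Gamma_n$ the relation $F(\lambda_n(t)) = 2\cos t$ holds, which by differentiation gives $\frac{dt}{d\lambda} = -\frac{F'(\lambda)}{p(\lambda)}$ with $p(\lambda) = \sqrt{4-F^2(\lambda)}$, precisely the substitution used to pass from the $t$-integral to the $\lambda$-integral in (37). Combined with the identities (29) and (36), namely $\Phi_{\pm}(x,\lambda_n(t)) = \Phi_{\pm t}(x,\lambda_n(t))$, and the formula (37) itself, each piece $a(\lambda(t))\Psi(x,\lambda(t))\,dt$ transforms into $(\Phi_{+}(x,\lambda)F_{-}(\lambda,f)+\Phi_{-}(x,\lambda)F_{+}(\lambda,f))\frac{1}{\varphi p(\lambda)}\,d\lambda$. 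Applying this termwise to (127) yields the two groups of integrals in (128).

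Next I would address the domains of integration. On the interval $\mathbb{R}\setminus B(h)$ the map $t \mapsto \lambda(t)$ is, away from the finitely many points of $A$ and from the branch points, a local homeomorphism onto its image, and the image of $\mathbb{R}\setminus B(h)$ is by definition $\sigma(L)\setminus\lambda(B(h))$, since $\sigma(L) = \bigcup_n \Gamma_n = \lambda(\mathbb{R})$ by (13). Similarly $\lambda(B_j(h))$ is the portion of the spectrum near the ESS $\lambda(\pi n_j)$, so the $j$-th term on the right of (127) becomes the $j$-th term on the right of (128). The principal value structure is preserved: the integral over $B_j(h,\delta) = B_j(h)\setminus B_j(\delta)$ maps to the integral over $\lambda(B_j(h,\delta)) = \lambda(B_j(h))\setminus\lambda(B_j(\delta))$, and taking $\delta \to 0$ on one side corresponds to taking it on the other, because $\lambda$ is continuous and the excised neighborhoods $B_j(\delta)$ shrink to the singular quasimomenta whose images are the ESS.

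Finally, the convergence claims transfer directly: since the change of variables is an equality of $L_2(a,b)$-valued integrals (not merely a formal manipulation), the convergence in $L_2(a,b)$ of the first integral and of the series in (127), established in Theorem 9, gives the same convergence for the corresponding objects in (128). I expect the main obstacle to be bookkeeping near the branch points and the points of $A$: one must check that the substitution $\frac{dt}{d\lambda} = -\frac{F'(\lambda)}{p(\lambda)}$ is legitimate across the finitely many singular quasimomenta, i.e. that the arcs $\lambda(B_j(h,\delta))$ are genuine regular spectral arcs on which (37) applies, and that the orientations match so the principal values are taken consistently. Once this is verified, the proof reduces to substituting (37) into (127) term by term, and I would simply write: "Changing the variable to $\lambda$ in (127) as was done in (37), using Notation 2 and recalling $\sigma(L) = \lambda(\mathbb{R})$, we obtain (128); the convergence statements follow from those in Theorem 9."
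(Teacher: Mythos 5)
Your proposal follows essentially the same route as the paper: the paper's entire argument for (128) is the one-line observation that one changes the variable from $t$ to $\lambda$ in (127) exactly as was done in (37), using $\frac{dt}{d\lambda}=-\frac{F'(\lambda)}{p(\lambda)}$, the identities (36) and (29), and Notation 2, with the $p.v.$ structure and the $L_{2}(a,b)$ convergence carried over from the preceding theorem. Your additional bookkeeping about the images $\lambda(B_{j}(h,\delta))$ and the regular-arc substitution is consistent with this and does not constitute a different method (note only that the result you are proving is the paper's Theorem 9, obtained from Theorem 8, not ``Theorem 10 from Theorem 9'').
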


Now let us do some conclusion about the obtained spectral expansions.

\begin{conclusion}
At first glance it seems that the obtained spectral expansions have a
complicated form, since the series (108) and (125) converge with parenthesis
(see Theorem 5 and Theorem 6) and in (127) and (128) the $p.v.$ integrals are
used. However, it is only connected with a complicated picture of the spectrum
and projections and nature of the Hill operator with complex periodic
potential. To confirm it, we now explain the necessity of the parenthesis and
$p.v.$ integrals and try to show that the all factors that effect to the
spectral expansion are taken into account. First, note that it follows from
the Notation 2 that the integrals in (127) and (128) are taking over all
$\mathbb{R}$ and $\sigma(L)$ except the discrete sets
\[
\left\{  \pi n_{j}:j=1,2,...,\right\}  \text{ }\And\text{\ }\mathbb{E=}%
\left\{  \lambda(\pi n_{j}):j=1,2,...,\right\}
\]
respectively. Since the corresponding integrals about the points of those sets
do not exist, we use the $p.v.$ integral, that is, the limit as $\delta
\rightarrow0.$ Moreover, the sets $B_{j}(h)$ are constructed in the way which
takes into account the requisite parenthesis in (108) and (125). Let us
explain, in detail, why the parenthesis and limits as $\delta\rightarrow0$ are
necessary for the spectral expansion for the general complex-valued periodic potentials:

\textbf{Necessity of the parenthesis in (108) and (125) and }$\mathbf{p.v.}%
$\textbf{ integrals in (127) and (128). }The series in (108) and (125)
converge with parenthesis and in parenthesis is included only the integrals of
the functions corresponding to splitting eigenvalues. The parenthesis is
necessary, due to the following. If $n\gg1$ and $\lambda_{n}(0)$ is ESS, then
$\lambda_{n}(0)$ is a double eigenvalue, $\lambda_{n}(0)=\lambda_{-n}(0)$ and
both of the functions $a_{n}(t)\Psi_{n,t}$ and $a_{-n}(t)\Psi_{-n,t}$ has
nonintegrable singularities (see (72) and the definition of $a_{n}(t)$ in
(44)), that is, their integrals do not exist. However, the integral
\begin{equation}
\int\limits_{\lbrack-h,h]}\left(  a_{n}(t)\Psi_{n,t}+a_{-n}(t)\Psi
_{-n,t}\right)  dt
\end{equation}
exists. Moreover, even if $\lambda_{n}(0)$ and $\lambda_{n}(\pi)$ are not ESS
respectively, then it is possible that the norm of
\begin{equation}
\int\limits_{\lbrack-h,h]}a_{n}(t)\Psi_{n,t}(x)dt\text{ }\And\int
\limits_{\lbrack\pi-h,\pi+h]}a_{n}(t)\Psi_{n,t}(x)dt\text{ }%
\end{equation}
do not tend to zero as $n\rightarrow\infty.$ Therefore the series in (108) and
(125 ) do not converge without parenthesis. More precisely, the series (108)
and (125 ) converge without parenthesis if and only if there exist $h>0$ such
that the the first and second integrals in (130) respectively exist and tend
to zero as $n\rightarrow\pm\infty.$ (see Theorem 10). Note that this situation
agree with the well-known result [13] that the root functions of the operators
generated by a ordinary differential expression in $[0,1]$ with regular
boundary conditions, in general, form a Riesz basis with parenthesis and in
parenthesis should be included only the functions corresponding to the
splitting eigenvalues. In particular, the periodic $(t=0)$ and antiperiodic
$(t=\pi)$ boundary conditions require the parenthesis. It is natural that in
the case of the operator $L$ generated by a ordinary differential expression
in $(-\infty,\infty)$ we included in parenthesis the Bloch functions
$\Psi_{n,t}(x)$ near two $t=0$ (see (108)) and $t=\pi$ (see (125)).

The using of the $p.v.$ integral about singular quasumomenta and ESS in (127)
and (128) respectively is necessary, since the integrals about those points do
not exist. We do not need the $p.v.$ integral if and only if the operator $L$
has no ESS.

Thus in the general case we should use the parenthesis and $p.v.$ integrals
and one can obtain a spectral expansion without parenthesis and $p.v.$
integrals if and only if $L(q)$ has no ESS and the integrals in (130) tend to
zero as $\left\vert n\right\vert \rightarrow\infty.$ Namely, we have the following.
\end{conclusion}

\begin{theorem}
For each continuous and compactly supported function $f$ we have the following
spectral decompositions
\[
f(x)=\frac{1}{2\pi}\sum_{k\in\mathbb{Z}}\int_{0}^{2\pi}a_{k}(t)\Psi
_{k,t}(x)dt=\frac{1}{2\pi}\int\limits_{\sigma(L)}(\Phi_{+}(x,\lambda
)F_{-}(\lambda,f)+\Phi_{-}(x,\lambda)F_{+}(\lambda,f))\frac{1}{\varphi
p(\lambda)}d\lambda
\]
if and only if $L(q)$ has no ESS and there exists $h>0$ such that the
integrals in (130) tend to zero as $\left\vert n\right\vert \rightarrow\infty$.
\end{theorem}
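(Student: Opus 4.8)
The plan is to derive both spectral decompositions from the already-established expansion (127) (equivalently (128)) of Theorem 9 by analyzing exactly when the parenthesis and the $p.v.$ integrals can be removed. Recall from Notation 2 and Theorem 9 that
\[
f(x)=\frac{1}{2\pi}\int\limits_{\mathbb{R}\backslash B(h)}a(\lambda(t))\Psi(x,\lambda(t))dt+\frac{1}{2\pi}\sum_{j}p.v.\int\limits_{B_{j}(h)}a(\lambda(t))\Psi(x,\lambda(t))dt,
\]
and that the set $\mathbb{E}$ of ESS is precisely $\{\lambda(\pi n_{j}):j=1,2,\dots\}$. The first direction (sufficiency) is the easy one: if $L(q)$ has no ESS, then $\mathbb{E}=\varnothing$, so there are no indices $n_{j}$, no sets $B_{j}(h)$, and the sum $\sum_{j}$ in (127) is empty; hence $B(h)=\varnothing$ and the expansion collapses to the single integral over all of $\mathbb{R}$ (resp. all of $\sigma(L)$). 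One still has to reassemble $\int_{\mathbb{R}}a(\lambda(t))\Psi(x,\lambda(t))dt$ into $\sum_{k}\int_{0}^{2\pi}a_{k}(t)\Psi_{k,t}(x)dt$; this is where the extra hypothesis is used. By Theorems 3, 5, 6 the term-by-term integration over $E(h)$, $\gamma(0,h)$, $\gamma(\pi,h)$ holds, and (108), (125) express the contributions near $t=0,\pi$ with parenthesis; I would invoke (112) (the no-ESS case of Theorem 5/6) to drop the parenthesis in each individual pair and then use the hypothesis that the integrals in (130) tend to zero as $|n|\to\infty$ to conclude that the resulting series $\sum_{n>N(h)}\int_{[-h,h]}a_{n}(t)\Psi_{n,t}dt$, $\sum_{n>N(h)}\int_{[-h,h]}a_{-n}(t)\Psi_{-n,t}dt$ each converge separately in $L_{2}(a,b)$; combining with Theorem 3 and regrouping by the index $k\in\mathbb{Z}$ yields $\frac{1}{2\pi}\sum_{k}\int_{0}^{2\pi}a_{k}(t)\Psi_{k,t}(x)dt$, and the change of variable of (37) gives the $\sigma(L)$ form.

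For the converse (necessity), I would argue the contrapositive in two cases. If $L(q)$ has an ESS, say $\Lambda=\lambda(\pi n_{j})$, then by Definition 3 the function $1/\alpha_{k}$ is nonintegrable near the corresponding singular quasimomenta for the relevant $k\in\mathbb{T}(\Lambda)$, and Remark 1 (applied to the specific $f=\Upsilon^{-1}\Psi_{k,t}^{\ast}$, or to a continuous compactly supported $f$ chosen so that the Gelfand transform does not vanish in the direction of $\Psi_{k,t}^{\ast}$ near $\pi n_{j}$) shows $a_{k}(t)\Psi_{k,t}(x)$ is nonintegrable over any neighborhood for some $x$; hence the integral $\int_{0}^{2\pi}a_{k}(t)\Psi_{k,t}(x)dt$ does not exist, so the unparenthesized series cannot hold. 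If $L(q)$ has no ESS but for every $h>0$ the integrals in (130) fail to tend to zero along some subsequence $|n|\to\infty$, then by (112) each term $\int_{[-h,h]}a_{n}(t)\Psi_{n,t}dt$ is a legitimate summand, but the sequence of partial-sum tails does not go to zero in $L_{2}(a,b)$ for suitable $a,b$, so the series $\sum_{k}\int_{0}^{2\pi}a_{k}(t)\Psi_{k,t}(x)dt$ diverges in the $L_{2}(a,b)$ norm; here I would produce an explicit $f$ (again a Gelfand transform of a single $\Psi_{n,t}^{\ast}$ truncated/smoothed to be continuous and compactly supported) for which the $n$-th integral in (130) has norm bounded below, exactly as sketched in Conclusion 1 and referenced there as Theorem 10.

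The main obstacle I anticipate is the necessity direction when $L(q)$ has no ESS: one must rule out the pathological possibility that the individual integrals in (130) do not vanish yet the full series still converges in $L_{2}(a,b)$ for every compactly supported continuous $f$. This requires showing that nonvanishing of (130) for some subsequence genuinely obstructs convergence for at least one such $f$, i.e. constructing a witness $f$ whose expansion coefficients $a_{n}(t)$ do not decay — which is delicate because $a_{n}(t)=\frac{1}{\alpha_{n}(t)}(f_{t},\Psi_{n,t}^{\ast})$ and one needs $(f_{t},\Psi_{n,t}^{\ast})$ to remain comparable to $1$ near $t=0$ for the chosen subsequence while $f$ stays in the class of continuous compactly supported functions. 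I expect this is handled by taking $f$ to be (a continuous, compactly supported approximation of) the inverse Gelfand transform of a single normalized adjoint eigenfunction $\Psi_{n_{0},t}^{\ast}$ for the offending index, using the asymptotic formulas (88)--(89) to control the error, exactly as the paper's Theorem 10 (cited in Conclusion 1) is set up to do; the remaining work is bookkeeping with the $L_{2}(a,b)$ norms via (101).
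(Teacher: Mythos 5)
The first thing to note is that the paper gives no proof of this theorem at all: it is stated after Conclusion 1 as a summary of Theorems 3, 5, 6, 8, 9, Remark 1 and the discussion of (130), so there is no written argument to compare yours against step by step. Your sufficiency half does follow the route the paper clearly intends and is essentially complete: with no ESS the sets $B_{j}(h)$ are empty and (127)/(128) collapse; Theorems 3, 5, 6 give the expansion with parentheses; the splitting identity is (111) and its analogue in Theorem 6 (you cite (112), which is the total projection $T_{n}$, not the splitting — a minor slip); and the hypothesis that the integrals in (130) tend to zero lets you pass from the pairwise-grouped series to the ungrouped one, since a convergent grouped series whose individual terms tend to zero converges ungrouped to the same sum; the change of variable of (37) then gives the $\sigma(L)$ form.

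The necessity half, however, has two genuine gaps, both of which you flag but neither of which your plan closes. First, Remark 1's witness $f=\Upsilon^{-1}\Psi_{n,t}^{\ast}$ is not a continuous compactly supported function, while the theorem quantifies only over such $f$; you must actually exhibit an admissible $f$ with $(f_{t},\Psi_{n,t}^{\ast})$ bounded away from zero near the singular quasimomentum. This is doable — by Theorem 2(c) the geometric multiplicity is $1$, so Lemma 1(b) gives continuity of $\Psi_{n,t}^{\ast}$ and $\alpha_{n}$ at $0$, $\alpha_{n}(t)\sim t$ by (72), and any continuous compactly supported $f$ with $(f_{0},\Psi_{n,0}^{\ast})\neq0$ then makes $a_{n}(t)\Psi_{n,t}(x)$ nonintegrable for suitable $x$ — but this construction has to be written out, not merely gestured at. Second, and more seriously, in the no-ESS case you need that failure of (130) (for every $h$) obstructs the unparenthesized expansion for some admissible $f$. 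From convergence of the unparenthesized series one only gets that the full terms $\int_{0}^{2\pi}a_{n}(t)\Psi_{n,t}dt$ tend to zero; subtracting the convergent $E(h)$-series of Theorem 3 and using the paired relations from Theorems 5 and 6 yields only that $u_{n}+v_{n}$, $u_{n}+u_{-n}$ and $v_{n}+v_{-(n+1)}$ tend to zero, where $u_{n},v_{n}$ denote the two integrals in (130); these relations do not force $u_{n}\rightarrow0$ and $v_{n}\rightarrow0$ separately, so the equivalence as stated needs an additional argument (or a cleverer choice of witness $f$) that your proposal does not supply. Since the paper omits the proof entirely, these are gaps relative to the claim itself rather than deviations from the author's argument, but they are exactly the points a complete proof would have to settle.
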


\end{document}